\numberwithin{equation}{section}
\theoremstyle{plain}
\newtheorem{theorem}{Theorem}[section] 
\newtheorem{lemma}[theorem]{Lemma}     
\newtheorem{proposition}[theorem]{Proposition}
\newtheorem{assumption}[theorem]{Assumption}
\theoremstyle{remark}
\DeclareMathAlphabet\mathbcal{OMS}{cmsy}{b}{n}
\newcommand\R{\mathbb{R}}
\newcommand{\bB}[1]{\boldsymbol{#1}}
\newcommand{\ttt}{\texttt{t}}
\renewcommand{\div}{{\rm div}}
\newcommand{\bdiv}{\mathbf{div}}
\newcommand\atopp[2]{\genfrac{}{}{0pt}{}{#1}{#2}}
\def\jump#1{\left[\!\left[#1\right]\!\right]}
\def\avg#1{\left\{\!\!\left\{#1\right\}\!\!\right\}}
\newcommand{\nsp}{k_{\rm b}}
\newcommand{\ssp}{k_{\rm f}}
\newcommand{\RR}{\mathbb{R}}
\renewcommand\L{{\mathrm L}}
\renewcommand\H{{\mathrm H}}
\newcommand{\RCtot}{\widetilde{\mathcal{R}}}
\newcommand{\bbeta}{\boldsymbol{\beta}}
\newcommand{\nn}{\boldsymbol{n}}
\newcommand{\biofilmindex}{\mathrm b}
\newcommand{\flowingindex}{\mathrm f}
\newcommand\kC{{k_{\biofilmindex}}}
\newcommand\kL{{k_{\flowingindex}}}
\newcommand{\rhoC}{\rho_{\biofilmindex}}
\newcommand{\rhoL}{\rho_{\flowingindex}}
\newcommand{\phiC}{{\phi_{\biofilmindex}}}
\newcommand{\phiL}{{\phi_{\flowingindex}}}
\newcommand{\vC}{{\bm{v}_{\biofilmindex}}}
\newcommand{\vL}{{\bm{v}_{\flowingindex}}}
\newcommand{\RC}{\mathcal{R}_{\bB{c}}}
\newcommand{\RL}{\mathcal{R}_{\bB{s}}}
\newcommand{\vrel}{\bm{v}_{\rm rel}}
\newcommand{\ppt}[1]{\frac{\partial #1}{\partial t}}
\newcommand{\abs}[1]{\left\lvert#1\right\rvert}
\newcommand{\aop}{\mathcal{F}}
\newcommand{\aconv}{a_h}
\newcommand{\adiff}{b_h}
\newcommand{\adiffc}{b^{\bB{c}}_h}
\newcommand{\adiffs}{b^{\bB{s}}_h}
\newcommand{\spaceq}{\mathbcal{P}_{2,0}^{\rm cont}(\Omega)}
\newcommand{\spacep}{\mathcal{P}_{0,0}(\Omega)}
\newcommand{\Mup}{M^{\uparrow}}
\newcommand{\Mdn}{M^{\downarrow}}
\newcommand{\MupC}{M^{\uparrow}_{\bB{c}}}
\newcommand{\MdnC}{M^{\downarrow}_{\bB{c}}}
\newcommand{\upwF}{\Phi_{e}}
\newcommand{\sF}{\widetilde{\Phi}^{\bB{s}}_{e}}
\newcommand{\cF}{\widetilde{\Phi}^{\bB{c}}_{e}}
\newcommand{\uF}{\widetilde{\Phi}_{e}}
\newcommand{\boundr}{{\rm R}}
\newcommand{\Ltwo}{0,\Omega}
\DeclareSIUnit\ourtime\second
\DeclareSIUnit\concentration{\kilogram\per\metre\cubed}
\DeclareSIUnit\acceleration{\metre\per\ourtime\squared}
\DeclareSIUnit\mobility{\metre\squared\per\ourtime}
\DeclareSIUnit\capillary{\metre}
\DeclareSIUnit{\kappaunit}{\metre\squared}
\DeclareSIUnit\viscosity{\pascal\ourtime}
\begin{document}
\title{
	\sc An invariant-region-preserving scheme for a convection-reaction-Cahn--Hilliard multiphase model of biofilm growth in slow sand filters
}
\author{
	{\sc Julio Careaga}\thanks{Departamento de Matem{\'a}tica, Universidad del B{\'\i}o B{\'\i}o, Chile, email:{\tt jcareaga@ubiobio.cl}.}
	\quad
	{\sc Stefan Diehl}\thanks{Centre for Mathematical Sciences, Lund University, Sweden, email:{\tt stefan.diehl@math.lth.se}.}
	\quad
	{\sc Jaime Manr{\'\i}quez}\thanks{Centre for Mathematical Sciences, Lund University, Sweden, email:{\tt jaime.manriquez@math.lth.se}.}
}
\date{}

\maketitle

\begin{abstract}
\noindent 
A multidimensional model of biofilm growth present in the supernatant water of a Slow Sand Filter is derived.
The multiphase model, consisting of solid and liquid phases, is written as a convection-reaction system with a Cahn--Hilliard-type equation with degenerate mobility coupled to a Stokes-flow equation for the mixture velocity. 
An upwind discontinuous Galerkin approach is used to approximate the convection-reaction equations, whereas an $H^1$-conforming primal formulation is proposed for the Stokes system.
By means of a splitting procedure due to the reaction terms, an invariant-region principle is shown for the concentration unknowns, namely non-negativity for all phases and an upper bound for the total concentration of the solid phases.
Numerical examples with reduced biofilm reactions are presented to illustrate the performance of the model and numerical scheme.
\end{abstract}
\smallskip

\noindent\textbf{Key words:} Cahn--Hilliard--Stokes equations, slow sand ﬁltration, biofilm growth, upwind scheme, discontinuous Galerkin, positivity preserving.
\smallskip

\noindent\textbf{2020 Mathematics Subject Classification:} 35Q49, 76T20, 35Q92.

\section{Introduction}\label{sec:intro}
\subsection{Scope}\label{sec:intro:scope}
Slow Sand Filters (SSFs) are a widely used technology in water purification processes \cite{Maiyo2023}, which offer advantages across both operational and environmental aspects, enhancing their overall value and impact. 
The principle behind an SSF is that filtration of water is carried out through a bed of fine sand at low velocity, retaining the organic and inorganic suspended matter in the upper layer of the sand bed in the form of growing biofilm, a fluffy sponge-like structure commonly referred to as the \emph{Schmutzdecke}, whose main purpose is to remove pathogens and other harmful bacteria as they are attached to the biofilm and decomposed. 
Despite the simplicity of its design and entire operation, from a modelling point of view, an SSF represents an intricate blend of physical, chemical and biological processes. 
In order to develop an appropriate mathematical model that captures at least part of the complexity of an SSF, one must take into account a variety of concentrations of biological matter, an assortment of reactions and other types of interactions and the fluid dynamics of the mixture. 

The one-dimensional multiphase model derived by Diehl et al.~\cite{Diehl2025} is based on a coupled system of partial differential equations (PDEs) for the mass balance of the component concentrations of the biofilm and flowing suspension, in which the interactions between species are modelled by nonlinear source terms. 
The main behaviour of the biofilm concentration is described by a Cahn--Hilliard-type equation, which is a nonlinear PDE with fourth-order spatial derivatives.
Such an equation models the behaviour of a viscous fluid (in this case the biofilm) where the cohesion energy modelled via a potential is relevant.
In a natural extension to several spatial dimensions, the unknown volume-averaged velocity is no longer uniquely determined by an incompressibility condition; hence, an additional momentum balance for the mixture is needed.

The first purpose of this contribution is to extend a part of the model in~\cite{Diehl2025} to two spatial dimensions, namely the biofilm growth from the packed sand bed upwards into the supernatant water.
Multiple concentrations of biofilm components (solid phases) and flowing-suspension solutes (liquid phases) are introduced, each one modelled by mass-balance convection-reaction equations depending on the total concentration of biofilm, with all components coupled via nonlinear reaction terms. 
Stokes-flow equations, modelling the volume-averaged velocity of the mixture, are coupled to the rest of the unknowns through the viscosity and forcing terms due to gravity and Korteweg capillary forces.
We obtain thus a system of convection-reaction and Cahn--Hilliard type for the concentrations coupled to Stokes-flow equations for the velocity field and pressure.

The second purpose of this article is to present a reliable invariant-region-preserving numerical scheme, which also preserves the incompressibility of the mixture flow.
The numerical scheme is based on the one by Acosta-Soba et al.~\cite{Acosta2023b} for their model with one biofilm and one liquid phase.
Our model has additional subphases.
The advective Cahn--Hilliard equation with degenerate mobility poses several difficulties in terms of numerical approximation, because of the high-order derivatives and the nonlinear coefficient functions in the higher-order terms~\cite{Abels2013, Guillen2014, Guillen2024a}.
The upwind discontinuous Galerkin approach in~\cite{Acosta2023b} ensures both proper control of the degenerate mobility terms and positivity for the case of piecewise constant approximations in the absence of reaction terms. 
To be able to prove an invariant-region-preserving property, we propose a simple time splitting where one time step is taken without reactions and one with.

\subsection{Related work}\label{sec:intro:related-work}
Biofilm growth, which is an essential component in slow sand filtration, is described by Cahn--Hilliard equations with degenerate mobility in~\cite{Klapper2006,Zhang2008}.
In~\cite{Diehl2025}, a multiphase model of the biofilm growth in the entire SSF was introduced, where the biofilm and soluble substrates are coupled via convection-reaction equations. 
Among the works dedicated to the modelling of different aspects of SSFs based on PDEs, we refer to \cite{Schijven2013, Song2020, Trikannad2023}. 
These models are mainly formulated in one spatial dimension, with the exception of \cite{Song2020}, which has a particulate-based model. 
In \cite{Schijven2013} and \cite{Trikannad2023}, the focus is on the removal of pathogens, which is modelled by by linear convection-diffusion-reaction equations.
Cahn--Hilliard equations were originated in a series of works~\cite{Cahn1959,Cahn1958,Cahn1959b}, and we refer to \cite{Jingxue1992,Elliott1996} for well-posedness analysis and~\cite{Wu2022} for a review on mathematical aspects and analytical issues of these equations. 

There is a number of numerical methods developed to approximate Cahn--Hilliard equations in two-dimensional domains: from conservative finite difference schemes \cite{Furihata2001, Li2013, Shin2011}, including the specific one developed in~\cite{Diehl2025}, to finite element \cite{Banas2008, Barrett1999} and discontinuous-Galerkin approaches \cite{Acosta2023b,Kay2009,Xia2007}. 
We give special attention to the structure-preserving upwind discontinuous-Galerkin scheme in~\cite{Acosta2023b} for the analysis here.
For the stationary Stokes system, there exist different types of formulations depending on the boundary conditions, regularity of the velocity field and conservation properties. 
Some examples are pseudo-stress-based~\cite{Gatica2010}, augmented~\cite{Gatica2011} and primal formulations~\cite{Boffi2013}. 

A system containing multicomponent convection-reaction equations can be found in the so-called reactive-sedimentation model \cite{Burger2021,Burger2023b,Careaga2024}, where the velocities of the solid and liquid components depend on the total concentration of solids, and the coupling is due to nonlinear biochemical reaction terms. 
Both the splitting technique and the strategies to show the invariant-region-preserving properties in the presence of reaction terms are based on~\cite{Burger2023b}.
Another multicomponent Cahn--Hilliard-based model can be found in~\cite{Clavijo2019} with the difference that each component is modelled by a Cahn--Hilliard equation, whereas our model has one Cahn--Hilliard-type equation for the total biofilm concentration.

There are different proposals for the coupling of the Cahn--Hilliard equation to the Navier--Stokes equations.
We refer to \cite[Table~1]{Eikelder2024} for a comparison of various models. 
Some analyses account for the relative mass flux between species~\cite{Abels2013, Stoter2023, Eikelder2024} in the momentum equation, whereas other models do not incorporate it~\cite{Giorgini2020} or consider its effects in the mass balance instead~\cite{Guo2022}. 
The incorporation of capillary forces also varies between models, with some stating it in terms of the gradient of the chemical potential~\cite{Guo2022, Han2015} or some expression including the gradient of the order parameter \cite{Abels2013, Dlotko2022, Han2021}.
We point to~\cite[Remark~2.1]{Eikelder2024} for an identity relating these different expressions, although we point out that the discrete spaces used for the numerical scheme plays a role in our choice of capillary-force term.

Other applications of Cahn--Hilliard equations in biology include membrane separation~\cite{Elson2010} and wound healing~\cite{Chatelain2011a, Chatelain2011b, Cherfils2014, Khain2008}.
Regarding the coupled Cahn--Hilliard--Navier--Stokes equations when applied to tumour growth, we mention \cite{Acosta2025,Zou2022,Guillen2024b,Guillen2024a}.

\subsection{Outline of the paper}\label{sec:intro:outline}
The rest of the paper is organized as follows. In Section \ref{sec:model}, we provide a derivation of our model leading to the governing equations. Constitutive assumptions, boundary conditions and prescribed model functions are also described in this section. Section~\ref{sec:scheme} is dedicated to the description and development of the numerical scheme employed. In Section~\ref{sec:scheme:prelim}, we introduce the mesh, the polynomial spaces and some operators used in our scheme. The fully discrete formulation of our model equations is presented in Section~\ref{sec:scheme:discontinuous}. The properties of the developed numerical scheme, including the main invariant-region result, are given in Section~\ref{sec:scheme:properties}. Finally, illustrative numerical simulations are shown in Section~\ref{sec:simulations}, and concluding remarks are presented in Section~\ref{sec:conclusions}.

\section{Model equations}\label{sec:model}

\subsection{Phases and mass balances}
We consider a biofilm matrix immersed in a solution comprised of water and substrates. The physical system is modelled as a mixture of solid and liquid components within a bounded domain $\Omega\subset \mathbb{R}^{d}$, $d\in \{2,3\}$, such that its boundary $\Gamma :=\partial\Omega$ is polyhedral, with outward unit normal vector $\bB{n}$. The polyhedral condition of the domain is only imposed to properly address the numerical scheme in Section~\ref{sec:scheme}.
The system consists of $\kC$ solid phase components (biofilm) with concentrations $c^{(1)}, \ldots, c^{(\kC)}$ and $\kL$ liquid phase components with concentrations $s^{(1)}, \ldots, s^{(\kL)}$, which are functions that depend on the space coordinate $\bB{x}\in \Omega$ and time $t\geq 0$. 
The concentrations of solid species are grouped in the vector $\bB{c} \coloneqq (c^{(1)}, \ldots, c^{(\kC)})^{\ttt}\in\mathbb{R}^{\kC}$, and the concentrations of substrates in $\bB{s}\coloneqq(s^{(1)}, \ldots, s^{(\kL)})^{\ttt}\in\mathbb{R}^{\kL}$, where $(\cdot)^{\ttt}$ is the transpose operation. We let the total concentrations of solids and liquids be, respectively,
\begin{align*}
	u(\bB{x},t) \coloneqq \sum_{i = 1}^{\kC} c^{(i)}(\bB{x},t), \qquad s_{\rm tot}(\bB{x},t) \coloneqq \sum_{i = 1}^{\kL} s^{(i)}(\bB{x},t),\qquad \forall \bB{x}\in \Omega,\quad t\geq 0.
\end{align*}
The densities of all solid components are assumed to be equal to $\rhoC > 0$, and similarly for the liquid components we assume all densities are equal to $\rhoL < \rhoC$. 
The biofilm density~$\rhoC$ is however assumed to be only slightly greater than~$\rhoL$.
To properly derive our model equations, we define the volume fractions related to the solid and liquid phases by
\begin{equation}\label{eq:sum2one}
\phiC \coloneqq\frac{u}{\rhoC},\qquad
\phiL \coloneqq\frac{s_{\rm tot}}{\rhoL},\qquad
\phiC + \phiL = 1.
\end{equation}
The mass balances for all solid and liquid components are
\begin{align}
	\ppt{c^{(i)}} + \div\big(\bm{\vC} c^{(i)}\big) = \RC^{(i)}(\bB{c},\bB{s}), \quad i = 1, \ldots, \kC, \label{eq:mass:c} \\
	\ppt{s^{(j)}} + \div\big(\bm{\vL} s^{(j)}\big) = \RL^{(j)}(\bB{c},\bB{s}), \quad j = 1, \ldots, \kL, \label{eq:mass:s}
\end{align}
for all $\bB{x}\in \Omega$ and $t\in(0,T]$, where $\vC\in \mathbb{R}^{d}$ and $\vL\in \mathbb{R}^{d}$ are the velocities of the solid and liquid phases, respectively, and $\mathrm{div}(\cdot)$ is the divergence operator in $\mathbb{R}^d$.
The functions $\RC^{(i)}$ and $\RL^{(j)}$ for all $i=1,\ldots,\kC$ and $j=1,\ldots,\kL$ are the reactive terms corresponding to the solid and liquid components, respectively, typically nonlinear functions involving monod factors.  
With the assumption that the total mass is preserved during reactions, we have
\begin{equation}\label{eq:sumR}
	\sum_{i=1}^{\kC}\RC^{(i)}+\sum_{i=1}^{\kL}\RL^{(i)}=0.
\end{equation}

\subsection{Velocities and momentum balance for the mixture}
We introduce the density of the mixture 
\begin{equation}\label{eq:densmix}
	\rho(\phiC) \coloneqq \rhoC\phiC + \rhoL\phiL = \phiL + {\Delta\rho}\, \phiC,
\end{equation}
where $\Delta\rho\coloneqq\rhoC-\rhoL$, and then the mass-averaged bulk velocity~$\bB{v}$, volume-averaged bulk velocity~$\bB{q}$ and relative velocity between phases~$\vrel$ by
\begin{align*}
	\bB{v} \coloneqq \frac{\phiC\rhoC\vC + \phiL\rhoL\vL}{\rho(\phiC)},\qquad \bB{q} \coloneqq \phiC\vC + \phiL\vL, \qquad \vrel \coloneqq \vC - \vL.
\end{align*}
With the excess pore pressure $p$ and a constitutive function $\nu\coloneqq\nu(u)$ for the viscosity of the mixture, we define the Cauchy stress tensor for the mixture
\begin{equation*}
\mathbb{S}(u,\bB{v},p) \coloneqq \nu(u)\bB{\varepsilon}(\bB{v}) - p\mathbb{I},\quad\text{where}\quad \bm\varepsilon(\bm{v}) \coloneqq \frac{1}{2}(\nabla \bm{v} + (\nabla \bm{v})^{\ttt})
\end{equation*}
is the symmetric part of the gradient or rate-of-strain tensor and $\mathbb{I}$ is the identity matrix of $\mathbb{R}^{d\times d}$.
The momentum balance for the mixture is given by
\begin{equation}\label{eq:mombal}
	{\rm D}^{\bB{v}}_t \big({\rho(\phiC)}\bB{v}\big)  = \bdiv\big(\mathbb{S}(u,\bB{v},p)\big) + \mathbf{f},
\end{equation}
where ${\rm D}^{\bB{v}}_t(\bB{w}) \coloneqq \partial_t \bB{w} + \bdiv (\bB{w} \otimes \bB{v})$ is the material derivative and $\mathbf{f}$ contains additional forces per unit volume acting on the mixture.
The operator $\bdiv(\cdot)$ is defined for tensors in $\mathbb{R}^{d\times d}$ and corresponds to the divergence operator acting along the rows of the tensor. 

Now, we observe that the relation between $\bB{v}$ and $\bB{q}$ can be derived by first expressing the phase velocities in terms of~$\bB{q}$ and~$\vrel$ as
\begin{align}\label{eq:vCvL}
	\vC = \bB{q} + \phiL\vrel = \bB{q} + (1-\phiC)\vrel , \qquad  \vL = \bB{q} - \phiC\vrel.
\end{align}
Then we substitute these expression into the definition of the mass-averaged velocity~\eqref{eq:densmix} to obtain
\begin{align*}
	\rho(\phiC)\bB{v} &= \phiC\rhoC\vC + \phiL\rhoL\vL 
	= \rho(\phiC)\bB{q} + {\Delta\rho}\,\phiC(1-\phiC)\vrel,
\end{align*}
from which we obtain the identity
\begin{equation*}
	\bB{v} = \bB{q} + \frac{{\Delta\rho}}{\rho(\phiC)}\phiC(1-\phiC)\vrel.
\end{equation*}
Under the assumption that ${{\Delta\rho}}/{\rho(\phiC)}\approx 0$ and that the relative velocity~$\vrel$ has the same order of magnitude as~$\bB{q}$, the above identity implies the approximation $\bB{v} \approx \bB{q}$ and allows us to state the momentum balance in terms of $\bB{q}$ rather than $\bB{v}$.
Furthermore, dividing~\eqref{eq:mass:c} by~$\rhoC$ and~\eqref{eq:mass:s} by~$\rhoL$ and adding the resulting equations, one gets
\begin{equation*}
	\div(\bB{q}) = \frac{1}{\rhoC}\sum_{i=1}^{\kC}\RC^{(i)}+\frac{1}{\rhoL}\sum_{i=1}^{\kL}\RL^{(i)}
	= \frac{1}{\rhoC\rhoL}\left( \rhoL\left( \sum_{i=1}^{\kC}\RC^{(i)} + \sum_{i=1}^{\kL}\RL^{(i)} \right) + {\Delta\rho} \sum_{i=1}^{\kC}\RL^{(i)} \right).
\end{equation*}
Then, thanks to Equation~\eqref{eq:sumR} and {${\Delta\rho}/(\rhoC\rhoL)\approx0$ (which is in agreement with the previous assumption), the right-hand side is small and we approximate it by zero, so that the volume-averaged velocity field is divergence free. Moreover, for the modelling of SSFs, we assume that the flow regime is such that the inertial terms in the momentum balance~\eqref{eq:mombal} are much smaller than the viscous terms, wherefore we neglect the material derivative ${\rm D}^{\bB{v}}_t(\rho(\phiC)\bB{v})$.
Consequently, the momentum and mass balances for the mixture in terms of $\bm{q}$ are given by the Stokes-flow equations
\begin{equation*}
	\begin{aligned}
		-\bdiv\big(\nu(u)\bB{\varepsilon}(\bB{q}) - p\mathbb{I}\big) &= \mathbf{f}, \\
		\div (\bm{q})&=0,
	\end{aligned}
\end{equation*}
where an explicit form for $\mathbf{f}$ is stated below. 

\subsection{Constitutive assumptions}
In reactive sedimentation~\cite{Burger2021}, which is also influenced by gravity, the relative velocity is expressed by a decreasing function of the solids concentration due to the increased hindrance of sedimentation with increased concentration.
In the case of biomass moving in a liquid, we make the following constitutive assumption for the relative velocity:
\begin{equation}\label{eq:vrel}
	\vrel \coloneqq -\lambda(1-\phiC)^\gamma\nabla\mu,
\end{equation}
where the unknown variable $\mu:=\mu(\bB{x},t)$, $\bB{x}\in\Omega$, $t\in(0,T]$, is the chemical potential of the mixture (here, dimensionless), $\gamma\geq 0$ is an exponent related to the squeezing of biofilm particles~\cite{Wang2007} and $\lambda>0$ is a constant mobility parameter [\unit{\mobility}].
In essence, we are proposing that, in the absence of bulk flow, the mixture moves proportionally to a potential gradient, and finds a steady state once the chemical potential is spacially homogeneous. Following \cite{Klapper2006}, we assume that the potential $\mu$ satisfies
\begin{equation} \label{eq:mu}
	\mu = -\kappa\,\div(\nabla\phiC) + \Psi'_0(\phiC)\qquad \text{with}\qquad
	\Psi_0(\phiC) \coloneqq \phi_{\rm b}^3\left(\phiC-\frac{4}{3}\phi_{\ast}\right),
\end{equation}
where $\kappa$ is a large gradient-penalizing constant [\unit{\kappaunit}], $\Psi_0$ is the free energy density of the mixture, and $\phi_{\ast}\in(0,1)$ is the stable biofilm volume fraction if no other forces are present. 
By defining $\Psi(u)\coloneqq\rhoC\Psi_0(u/\rhoC)$, we can write the right-hand side of~\eqref{eq:mu} in terms of the total concentration~$u$ instead
\begin{equation*}
	\mu = -\frac{\kappa}{\rhoC}\,\div(\nabla u) + \Psi'(u),
\end{equation*}
and we assume that the following bound exists:
\begin{equation}\label{eq:bound:dPsi}
	\Psi'_{\rm max} \coloneqq \max\big\{|\Psi'(u)|: 0\leq u \leq 1\big\}.
\end{equation}
We assume that the mixture viscosity is the convex combination
\begin{align*}
 \nu(u) \coloneqq \nu_{\rm b}\Big(1 - \dfrac{u}{\rhoC}\Big) + \nu_{\rm f}\dfrac{u}{\rhoC}
\end{align*}
of the reference viscosities $\nu_{\rm b}$ and $\nu_{\rm f}$.

The body force per unit volume $\mathbf{f}$ can be written as
\begin{equation}
	\mathbf{f}(u,\mu,\nabla u)\coloneqq -\dfrac{g{\Delta\rho}}{\rhoC}u\bB{k}+{\eta}\mathbf{f}_{\rm kw}(\mu,\nabla u), \label{eq:source:stokes}
\end{equation}
where $g$ is the acceleration of gravity, $\bB{k}$ is the unit normal vector pointing in the direction opposite to gravity, $\eta$ [\unit{\capillary}] is a positive constant related to the surface tension and $\mathbf{f}_{\rm kw}$ is a force per unit volume due to elastic energy.
We will use the following function for the modelling of capillary forces as in \cite{Dlotko2022, Han2021}:
\begin{align*}
	\mathbf{f}_{\rm kw}(\mu,\nabla u) \coloneqq \mu\nabla u = \Big(-\frac{\kappa}{\rhoC}\,\div(\nabla u) + \Psi'(u)\Big)\nabla u \eqqcolon \widetilde{\mathbf{f}}_{\rm kw}(u,\div(\nabla u),\nabla u),
\end{align*}
where the alternative expression $\widetilde{\mathbf{f}}_{\rm kw}$ will be used for a simulation in Section~\ref{sec:scheme}.

\subsection{Governing equations}
Into the mass balances~\eqref{eq:mass:c} and \eqref{eq:mass:s}, we substitute the expressions~\eqref{eq:vCvL} for the phase velocities~$\vC$ and~$\vL$ in terms of~$\bB{q}$ and the relative velocity~$\vrel$, where the latter has the constitutive assumption~\eqref{eq:vrel}.
Then the mass-balances become
\begin{align}
	\ppt{c^{(i)}} + \div\Big(c^{(i)}\big(\bB{q} - \lambda (1-u/\rhoC)^{1+\gamma}\nabla\mu\big) \Big) = \RC^{(i)}, \quad i = 1, \ldots, \kC, \label{eq:mass:c:new} \\
	\ppt{s^{(j)}} + \div\Big(s^{(j)}\big(\bB{q} + \lambda (u/\rhoC)(1-u/\rhoC)^{\gamma}\nabla\mu\big) \Big) = \RL^{(j)}, \quad j = 1, \ldots, \kL. \label{eq:mass:s:new}
\end{align}
Summing all the equations of \eqref{eq:mass:c:new}, we obtain the following equation for the total solids concentration:
\begin{equation}\label{eq:CH}
	\ppt u + \div\big(u\bB{q} - \lambda u(1-u/\rhoC)^{1+\gamma}\nabla\mu\big) = \RCtot(\bB{c},\bB{s}),
\end{equation}
where $\RCtot(\bB{c},\bB{s}) := \RC^{(1)}(\bB{c},\bB{s})+\ldots+\RC^{(\kC)}(\bB{c},\bB{s})$ is the reaction term corresponding to the total solids concentration. 
Equation~\eqref{eq:CH} is a Cahn--Hilliard-type equation with advection and degenerate-mobility function 
\begin{equation}\label{eq:M}
	M(u) \coloneqq \begin{cases}
		\lambda u(1-u/\rhoC)^{1+\gamma}, & 0 \leq u \leq \rhoC, \\
		0, & \text{otherwise},
	\end{cases}
\end{equation}
where we have extended the mobility by zero to values outside of the admissible-values region.
We note that by setting $\gamma = 0$, we recover the usual Cahn--Hilliard mobility used in models such as the one in~\cite{Acosta2023b}, whereas $\gamma=1$ corresponds to the model studied in~\cite{Chatelain2011b}.

To summarize the model, we introduce the functions
\begin{align*}
	M_{\bB{c}}(u) &\coloneqq \begin{cases}
		\lambda(1-u/\rhoC)^{1+\gamma},\phantom{(\rhoC)} & 0 \leq u \leq \rhoC, \\
		0, &\text{otherwise},
	\end{cases}\\
	M_{\bB{s}}(u) &\coloneqq \begin{cases}
	\lambda (u/\rhoC)(1-u/\rhoC)^{\gamma}, & 0 \leq u \leq \rhoC, \\
	0, &\text{otherwise},
\end{cases}
\end{align*}
which satisfy $uM_{\bB{c}}(u)=M(u)$ and $(\rhoC-u)M_{\bB{s}}(u)=M(u)$ for $u\in\R$.

The governing equations are the following for the unknowns $(\bB{c},\bB{s},u,\mu)\in \RR^{\kC+\kL+2}$ and $(\bB{q},p)\in \RR^{d+1}$, which are defined for all $\bB{x}\in \Omega$ and $t\in(0,T\,]$:
\begin{subequations} \label{syst:main}
	\begin{align}
		\dfrac{\partial c^{(i)}}{\partial t}  +\div\Big(c^{(i)}\big(\bB{q} - M_{\bB{c}}(u)\nabla\mu\big)\Big) & = \mathcal{R}_{\bB{c}}^{(i)}(\bB{c},\bB{s}),\qquad i = 1,\dots,\nsp,\label{eq:solids}\\[1ex]
		\dfrac{\partial s^{(j)}}{\partial t}  + \div\Big(s^{(j)}\big(\bB{q} + M_{\bB{s}}(u)\nabla\mu \big)\Big) & = \mathcal{R}_{\bB{s}}^{(j)}(\bB{c},\bB{s}),\qquad j = 1,\dots,\ssp, \label{eq:substrates}\\[1ex]
		\dfrac{\partial u}{\partial t} +\div\big(u\bB{q} - M(u)\nabla\mu\big) &= \mathcal{\widetilde{R}}(\bB{c},\bB{s}),\label{eq:total}\\
		\mu + \dfrac{\kappa}{\rhoC} \nabla^2 u - \Psi'(u) &= 0,\label{eq:potencial} \\[1ex]
		-\bdiv\big(\nu(u)\bB{\varepsilon}(\bB{q}) - p\mathbb{I}\big)  &= {-\frac{g\Delta\rho}{\rhoC} u\bm{k} + \eta \mu\nabla u},\label{eq:stokesmom}\\[1ex]
		\div(\bB{q}) &= 0. \label{eq:stokesdiv}
	\end{align}
\end{subequations}
Since the sum of the $\bB{c}$ components is equal to $u$, one of the equations of~\eqref{eq:solids} and~\eqref{eq:total} is redundant and can be obtained from the other.
In Section~\ref{sec:scheme}, we show that it is sufficient to define the numerical fluxes properly to preserve this property at each time iteration. 
Furthermore, assuming that the water concentration is~$s^{(k_{\rm f})}$ and that this do not influence any reaction, Equation~\eqref{eq:substrates} for $j=k_{\rm f}$ can be obtain after all the other equations have been solved because of~\eqref{eq:sum2one}.
System~\eqref{syst:main} is supplemented with the following conditions:
\begin{align}
 	\bB{q}&=\bB{0}\quad\text{on}\quad \partial\Omega, \label{def:boundary:conditions:q}\\
		M(u)\nabla\mu\cdot\nn&= 0 \quad\text{on}\quad \partial\Omega, \label{def:boundary:conditions:mu}
	\\ \int_{\Omega}p &= 0.
\end{align}
Although our results only cover the case of zero boundary conditions for~$\bB{q}$, we will in Section~\ref{sec:simulations} show one simulation with non-zero Dirichlet condition for~$\bB{q}$ at in- and outlets.

\subsection{Reaction terms and stoichiometric matrices}\label{sec:model:reactions}
The reaction terms have the following form:
\begin{align*}
	\mathbcal{R}_{\bB{c}}(\bB{c},\bB{s}) = \bB{\sigma}_{\bB{c}}\bB{r}(\bB{c},\bB{s}),\qquad  \mathbcal{R}_{\bB{s}}(\bB{c},\bB{s}) = \bB{\sigma}_{\bB{s}}\bB{r}(\bB{c},\bB{s}),
\end{align*}
where $\bB{\sigma}_{\bB{c}}\in\mathbb{R}^{\nsp\times l}$ and $\bB{\sigma}_{\bB{s}}\in\mathbb{R}^{\ssp\times l}$ are constant and dimensionless stoichiometric matrices, and $\bB{r}(\bB{c},\bB{s})\in \mathbb{R}^l$, $l\in \mathbb{N}$, is the vector of reactions.
In order to prove positivity-preserving results in Lemma~\ref{thm:maxprinciple:ODEs} below and to ensure conservation of mass, we make the following assumptions and notation.
We assume that the reaction rates are bounded functions $\bB{r}(\bB{c},\bB{s})\geq \bB{0}$ for all $(\bB{c},\bB{s})\in \mathbb{R}^{\nsp}\times \mathbb{R}^{\ssp}$.
Given a component~$k$ of concentration $\bB{\xi}\in\{\bB{c},\bB{s}\}$, we define the index sets of negative and positive stoichiometric coefficients, respectively, by
\begin{align*}
	\mathcal{J}_{\bB{\xi},k}^-:=\Big\{j\in\mathbb{N}:\,\sigma_{\bB{\xi}}^{(k,j)}<0\Big\}, \qquad
	\mathcal{J}_{\bB{\xi},k}^+:=\Big\{j\in\mathbb{N}:\,\sigma_{\bB{\xi}}^{(k,j)}>0\Big\} \quad \text{for}\quad\bB{\xi}\in\{\bB{c},\bB{s}\}.
\end{align*}

\begin{assumption}
	Let $\bB{\xi}\in\{\bB{c},\bB{s}\}$. The stoichiometric matrices and vector of reaction rates satisfy the following hypotheses
	\begin{enumerate} \label{asumption:reactionterms}
		\item The vector of ones belongs to ${\rm ker}(\bB{\sigma}_{\bB{c}}^{\tt t}, \bB{\sigma}_{\bB{s}}^{\tt t})\,$. This is overall mass preservation and implies~\eqref{eq:sumR}.
		\item For each $j=1,\dots, l$, there holds $r^{(j)}\in \mathrm{C}\big(\bar{\mathbb{R}}_+^{\nsp},\bar{\mathbb{R}}_+^{\ssp}\big)$, with $r^{(j)}$ locally Lipschitz continuous.
		\item For every $j\in \mathcal{J}_{\bB{\xi},k}^-$ there exists a bounded function $\bar{r}^{(j)}\in  \mathrm{C}\big(\bar{\mathbb{R}}_+^{\nsp},\bar{\mathbb{R}}_+^{\ssp}\big)$ with $\bar{r}^{(j)}(\bB{c},\bB{s})>0$ if $\bB{c}>\bB{0}$ and $\bB{s} > \bB{0}$, and $\bar{r}^{(j)}(\bB{c},\bB{s})=0$ if $\bB{c} = \bB{0}$ and $\bB{s}=\bB{0}$, such that
		\begin{align*}
			r^{(j)}(\bB{c},\bB{s}) =  \bar{r}^{(j)}(\bB{c},\bB{s})\xi^{(k)}.
		\end{align*}
		(The reason for this assumption is natural; if a component $\xi^{(k)}$ is consumed down to zero concentration, then no more of that component can be consumed.)
		There exists an upper bound $\boundr>0$ such that $\bar{r}^{(j)}(\bB{c},\bB{s}) \leq \boundr$ for all $j\in \mathcal{J}_{\bB{\xi},k}^-$ and $1\leq k \leq \nsp+\ssp$ and all $(\bB{c},\bB{s})\in\mathbb{R}^{\nsp}\times \mathbb{R}^{\ssp}$ with $\bB{0}\leq\bB{c}\leq\rho_{\rm b}\bB{1}$ and $\bB{s}\geq\bB{0}$.
		\item There exists an $\boundr_{\max} > 0$ such that $|\mathcal{R}_{\bB{c}}^{(i)}(\bB{c},\bB{s})| \leq \boundr_{\max}$ and $|\mathcal{R}_{\bB{s}}^{(j)}(\bB{c},\bB{s})| \leq \boundr_{\max}$, for all $0\leq i\leq \nsp$, $0\leq j\leq \ssp$ and all $(\bB{c},\bB{s}) \in \mathbb{R}^{\nsp}\times \mathbb{R}^{\ssp}$ with $\bB{0}\leq\bB{c}\leq\rho_{\rm b}\bB{1}$ and $\bB{s}\geq\bB{0}$.
		\item There exists an $\varepsilon > 0$ such that
		\begin{align*}
			\mathbcal{R}_{\bB{c}}(\bB{c},\bB{s}) = \bB{0}\,\quad \text{for all}\quad u = \sum_{i=1}^{\nsp} c^{(i)}\, \geq\, \rhoC - \varepsilon.
		\end{align*}
	\end{enumerate}
\end{assumption}

\section{Numerical scheme} \label{sec:scheme}
\subsection{Preliminaries}\label{sec:scheme:prelim}
In this section we consider a bounded polygonal domain $\Omega\subset \RR^d$ with $d\in\{2,3\}$, and a regular triangulation of $\Omega$ denoted by $\mathcal{T}_h=\{K\}_{K\in \mathcal{T}_h}$ with meshsize $h\in \RR^{+}$, where the elements $K\in\mathcal{T}_h$ are triangles (if $d=2$) or a tetrahedra (if $d=3$). 
For the triangulation $\mathcal{T}_h$, we assume that there exists $\widetilde{C}>0$ such that
\begin{align}\label{eq:admissiblemesh}
	\widetilde{C}\, h_K^{d}\leq |K|\quad\text{and}\quad \abs{\partial K} \leq \widetilde{C}^{-1}h_K^{d-1},\qquad \forall K\in \mathcal{T}_h,
\end{align}
where $h_K$ and $|K|$, denote the diameter and measure of $K\in \mathcal{T}_h$, respectively.
Condition \eqref{eq:admissiblemesh} is to fulfil the requirements of an admissible mesh as described in \cite{Eymard2000}, which is going to be used later on in Lemma~\ref{lem:aux:existence}.
Furthermore, we define $\mathcal{E}_h$ as the set of edges of $\mathcal{T}_h$ (or faces if $d=3$), which is composed of interior and boundary edges, grouped into sets $\mathcal{E}_h^{\rm int}$ and $\mathcal{E}_h^{\partial}$, respectively, such that $\mathcal{E}_h=\mathcal{E}_h^{\rm int}\cup\mathcal{E}_h^{\partial}$. 
Then, given $K\in \mathcal{T}_h$ and neighbour element $L\in \mathcal{T}_h$, the unique shared edge $e\in\mathcal{E}_h^{\rm int}$ is such that $e=\partial K\cap\partial L$ with unit normal vector $\nn_e$ pointing to $L$. 
For boundary edges $e\in\mathcal{E}_h^{\partial}$, we assume the unit normal vector $\nn_e$ is pointing outwards from $\Omega$. 
We let $\mathcal{P}_\ell(K)$ be the local space of polynomial functions of degree less than or equal to $\ell\geq 0$ over $K\in \mathcal{T}_h$, for all $K\in \mathcal{T}_h$. 
In addition, we define the discontinuous and continuous global polynomial spaces by
\begin{align*}
	&\mathcal{P}_{\ell}(\Omega) \coloneqq\Big\{\varphi:\Omega\to \mathbb{R}:\quad \varphi|_{K}\in \mathcal{P}_{\ell}\qquad \forall K\in \mathcal{T}_h(\Omega)\Big\},\qquad \mathcal{P}^{\rm cont}_{\ell}(\Omega) \coloneqq \mathcal{P}_{\ell}(\Omega)\cap C(\bar{\Omega}).
\end{align*}
For each piecewise polynomial function $\varphi_h$, where the subscript $h$ corresponds to the mesh size of $\mathcal{T}_h$, we will drop the subscript $h$ whenever the function $\varphi_h$ is evaluated on a triangle $K\in \mathcal{T}_h$, that is, $\varphi_K \coloneqq \varphi_h|_K$.
To address the numerical fluxes at edges of $\mathcal{T}_h$, we define the average $\avg{\cdot}$ and jump $\jump{\cdot}$ of a scalar function $\varphi_h\in \mathcal{P}_{\ell}(\Omega)$ on $e\in \mathcal{E}_h^{\rm int}$ respectively as follows:
\begin{align*}
	\avg{\varphi_h}&\coloneqq
	\begin{cases}
		\dfrac{\varphi_K+\varphi_L}{2}&\text{if } e=\partial K\cap\partial L\in\mathcal{E}_h^{\rm int},\\
		\varphi_K&\text{if }e=e\cap\partial K\in\mathcal{E}_h^{\partial},
	\end{cases}\qquad
	\jump{\varphi_h}\coloneqq
	\begin{cases}
		\varphi_K-\varphi_L&\text{if } e=\partial K\cap\partial L\in\mathcal{E}_h^{\rm int},\\
		\varphi_K&\text{if }e=e\cap\partial K\in\mathcal{E}_h^{\partial}.
	\end{cases}
\end{align*}
For a scalar function $\varphi$, we employ the identity $\varphi = \varphi_{\oplus}-\varphi_{\ominus}$, where $\varphi_{\oplus}$ and $\varphi_{\ominus}$ are the positive and negative parts of $\varphi$, defined as
\begin{align} \label{eq:positive-negative:parts}
	\varphi_\oplus\coloneqq\frac{\vert \varphi\vert +\varphi}{2}=\max\{\varphi,0\},
	\qquad
	\varphi_{\ominus}\coloneqq\frac{\vert \varphi\vert -\varphi}{2}=-\min\{\varphi,0\}.
\end{align}
Note that both functions $\varphi_{\oplus}$ and $\varphi_{\ominus}$ are non-negative.

The unimodal mobility function $M(u)$ defined by~\eqref{eq:M} has its maximizer at $u_{\rm mid} \coloneqq {\rho_{\rm b}}/(2 + \gamma)$.
It is increasing for $u<u_{\rm mid}$ and decreasing otherwise.
We define the following functions:
\begin{subequations}
	\begin{alignat*}{2}
		M^\uparrow(u)&\coloneqq
		\begin{cases}
			M(u) &\text{ if }u\le u_{\rm mid},\\
			M(u_{\rm mid}) &\text{ if }u>u_{\rm mid},
		\end{cases}\\
		M^\downarrow(u)&\coloneqq
		\begin{cases}
			0 &\text{ if }u\le u_{\rm mid},\\
			M(u) - M(u_{\rm mid}) &\text{ if }u >u_{\rm mid}.
		\end{cases}
	\end{alignat*}
\end{subequations}
Observe that $ M^\uparrow(u) + M^\downarrow(u) = M(u)$, and $M^\uparrow(u)\geq 0$ is an increasing function, while $M^\downarrow(u)\leq 0$ is decreasing. 
Both $M^\uparrow$ and $M^\downarrow$ are used to approximate the numerical fluxes of \eqref{eq:total} at the edges of each element $K\in \mathcal{T}_h$ in the upcoming section. 
In addition, to address the numerical flux of \eqref{eq:solids}, we introduce the functions:
\begin{align*}
	M_{\bB{c}}^{\uparrow}(u) & = \begin{cases}
		M_{\bB{c}}(u) &\text{ if }u\le u_{\rm mid},\\
		M(u_{\rm mid})/u &\text{ if }u> u_{\rm mid},
	\end{cases}\\
	M_{\bB{c}}^{\downarrow}(u) & = \begin{cases}
		0 &\text{ if }u\le u_{\rm mid},\\
		M_{\bB{c}}(u)-M(u_{\rm mid})/u &\text{ if }u> u_{\rm mid},
	\end{cases}
\end{align*}
which satisfy $M_{\bB{c}}^{\uparrow}(u)\geq 0$ and $M_{\bB{c}}^{\downarrow}(u)\leq 0$, the sum is such that $M_{\bB{c}}^{\uparrow}(u) + M_{\bB{c}}^{\downarrow}(u)= M_{\bB{c}}(u)$ for all $u\in \R$ and the following identities hold
\begin{align*}
	uM_{\bB{c}}^{\uparrow}(u) = M^{\uparrow}(u),\qquad uM_{\bB{c}}^{\downarrow}(u) = M^{\downarrow}(u) \quad \text{for}\quad u\in[0,\rho_{\rm b}].
\end{align*}
Observe that $M_{\bB{c}}^{\uparrow}$ does not correspond to the increasing part of $M_{\bB{c}}$ and the notation with the arrows in this case is mainly because of the identities given above.

During the rest of the paper and without loss of generality, whenever we refer to a general edge $e\in \mathcal{E}_h^{\rm int}$, we will assume that this edge is shared by generic triangles $K\in \mathcal{T}_h$ and $L\in\mathcal{T}_h$, unless explicitly specified the triangles. Finally, we will make use of the standard notation for the space of square-integrable measurable scalar functions $\mathrm{L}^2(\Omega)$, and Sobolev spaces $\H^k(\Omega)$, with $k$ integer, and their respective norms $\|\cdot\|_{0,\Omega}$ and $\|\cdot\|_{k,\Omega}$.
In turn, we will use bold font for the case of vector spaces. Furthermore, we let $(\cdot,\cdot)_{\Ltwo}$ be the standard inner product in $\mathrm{L}^2(\Omega)$, and introduce the following spaces
\begin{align*}
	\mathbf{H}(\div;\Omega)  &: = \bigg\{\bB{w}\in  \mathbf{L}^2(\Omega):\quad\div(\bB{w})\in \mathrm{L}^2(\Omega) \bigg\},\\
	\H_0^1(\Omega) &:= \Big\{\varphi \in \H^1(\Omega):\quad \varphi = 0\quad\text{on }\Gamma\Big\},
\end{align*}
where the norm of $\mathbf{H}(\div;\Omega)$ is defined as $\|\cdot\|_{\div,\Omega}:=\|\cdot\|_{0,\Omega} + \|\div(\cdot)\|_{0,\Omega}$.

\subsection{Discontinuous Galerkin formulation}\label{sec:scheme:discontinuous}
To approximate Equations~\eqref{syst:main}, we first test each of the four Equations~\eqref{eq:solids}--\eqref{eq:total} against a test function $\varphi_h\in \mathcal{P}_{\ell}(\Omega)$, and test Equation~\eqref{eq:potencial} against $\vartheta_h\in \mathcal{P}_{\ell}^{\rm cont}(\Omega)$, and integrate over $\Omega$. 
Then, applying integration by parts on the divergence operators of each equation in \eqref{syst:main} for all $K\in\mathcal{T}_h$, we obtain the weak formulation:
\begin{subequations} \label{syst:semid:space}
	\begin{alignat}{2}
		\big(\partial_t c^{(i)},\varphi_h\big)_{\Ltwo} + \aop\big(c^{(i)}\big(\bB{q}-M_{\bB{c}}(u)\nabla\mu\big),\varphi_h\big) &= \big( \mathcal{R}_{\bB{c}}^{(i)}(\bB{c},\bB{s}), \varphi_h\big)_{\Ltwo}, \qquad&& i=1,\dots, k_{\rm b},\label{eq:semid:c}\\[1ex]
		\big(\partial_t s^{(j)},\varphi_h\big)_{\Ltwo} + \aop\big(s^{(j)}\big(\bB{q} + M_{\bB{s}}(u)\nabla\mu\big),\varphi_h\big) &= \big( \mathcal{R}_{\bB{s}}^{(j)}(\bB{c},\bB{s}), \varphi_h\big)_{\Ltwo}, \qquad&& j=1,\dots, k_{\rm f}\label{eq:semid:s},\\[1ex]
		\big(\partial_t u,\varphi_h\big)_{\Ltwo} + \aop\big(u \bB{q} - M(u)\nabla\mu,\varphi_h\big) &= \big(\mathcal{\widetilde{R}}(\bB{c},\bB{s}), \varphi_h\big)_{\Ltwo},\label{eq:semid:u}\\[1ex]
		\dfrac{\kappa}{\rho_{\rm b}} \big( \nabla u, \nabla \vartheta_h \big)_{\Ltwo} + \big(\Psi'(u),\vartheta_h\big)_{\Ltwo}  & =  \big(\mu,\vartheta_h)_{\Ltwo},\label{eq:semid:mu}
	\end{alignat}
\end{subequations}
where the total flux $\aop$, which results from the integration by parts on \eqref{eq:semid:c}, \eqref{eq:semid:s} and \eqref{eq:semid:u}, is defined by
\begin{align*}
	\aop(\bB{w},v) & := -\sum_{K\in \mathcal{T}_h} \int_K \bB{w} \cdot\nabla v
	+ \sum_{e\in \mathcal{E}_h^{\rm int}} \int_{e} \big(\bB{w}\cdot \nn_e\big) \, \jump v,
\end{align*}
for all $\bB{w}\in \mathbf{L}^2(\Omega)$ and $v\in \mathrm{L}^2(\Omega)$ such that $v|_K \in \H^1(K)$ for all $K\in \mathcal{T}_h$.
To address the time discretization of \eqref{syst:semid:space}, we consider the time points $t^n = n\Delta t$ with $n\in \mathbb{N}$ and $\Delta t>0$ the time step, and employ the backward Euler method for the approximation of the time derivatives in \eqref{eq:semid:c}--\eqref{eq:semid:u}. 
Furthermore, to handle the reaction terms of \eqref{eq:semid:c}--\eqref{eq:semid:u}, we make use of a splitting procedure.
We define the auxiliary unknowns $\widehat{u}:=\widehat{u}(\bB{x},t)\in \R$, $\widehat{\bB{c}}:=\widehat{\bB{c}}(\bB{x},t)\in \R^{\nsp}$, and $\widehat{\bB{s}}:=\widehat{\bB{s}}(\bB{x},t)\in \R^{\ssp}$, and introduce a single-step approximation of an ODE whose right-hand side is given by the respective reaction terms of \eqref{eq:semid:c}, \eqref{eq:semid:s} and \eqref{eq:semid:u}.

The fully discrete discontinuous Galerkin scheme for \eqref{eq:semid:c}--\eqref{eq:semid:mu} is written as follows:
Given $n\geq 0$ and $(u_{h}^{n},\bB{s}_{h}^{n},\bB{c}_{h}^{n})\in \mathcal{P}_{\ell}(\Omega)\times [\mathcal{P}_{\ell}(\Omega)]^{\ssp}\times [\mathcal{P}_{\ell}(\Omega)]^{\nsp}$,
find $u_{h}^{n+1}, \widehat{u}_h^{\, n+1}\in \mathcal{P}_{\ell}(\Omega)$, $\bB{c}_{h}^{n+1}, \widehat{\bB{c}}_{h}^{n+1}\in [\mathcal{P}_{\ell}(\Omega)]^{\nsp}$, $\bB{s}_{h}^{n+1}, \widehat{\bB{s}}_{h}^{n+1}\in [\mathcal{P}_{\ell}(\Omega)]^{\ssp}$, and $\widetilde{u}^{n+1}_h, \mu_{h}^{n+1}\in \mathcal{P}^{\rm cont}_{\ell+1}(\Omega)$ such that
\begin{subequations} \label{syst:scheme:fulld}
	\begin{align}
		\dfrac{1}{\Delta t}\big(\widehat{u}_h^{\, n+1}- u_h^{n},\varphi_h\big)_{\Ltwo} + \aconv\big(\widehat{u}_h^{\,n+1},\varphi_h; \bB{q}_h^{n}\big) + \adiff\big(\widehat{u}_h^{\, n+1},\varphi_h; -\nabla\mu_h^{n+1}\big) & = 0, \label{eq:fulld:u:hat} \\[1.5ex]
		\dfrac{1}{\Delta t} \big(\widehat{c}^{\, (i),n+1}_h- c^{(i),n}_h,\varphi_h\big)_{\Ltwo} + \aconv\big(\widehat{c}^{\, (i),n+1}_h, \varphi_h; \bB{q}_h^{n}\big)+ \adiffc\big(\widehat{c}^{\, (i),n+1}_h, \varphi_h;  - \nabla\mu_h^{n+1}, \widehat{u}_h^{\, n+1}\big) & = 0, \label{eq:fulld:c:hat}
		\\[1.5ex]
		\dfrac{1}{\Delta t}\big(\widehat{s}^{(j),n+1}_h-s^{(j),n}_h,\varphi_h\big)_{\Ltwo} + \aconv\big(\widehat{s}^{(j),n+1}_h,\varphi_h; \bB{q}_h^{n}\big) + \adiffs\big(\widehat{s}^{(j),n+1}_h,\varphi_h; \nabla\mu_h^{n+1}, \widehat{u}_h^{\, n+1}\big) & = 0, \label{eq:fulld:s:hat}
	\end{align}
	coupled to
	\begin{align}
		\big(c^{(i),n+1}_h,\varphi_h\big)_{\Ltwo} & = \big(\widehat{c}^{\, (i),n+1}_h,\varphi_h\big)_{\Ltwo} +  \Delta t\big(\mathcal{R}_{\bB{c}}^{(i)}(\widehat{\bB{c}}_h^{n+1},\widehat{\bB{s}}_h^{n+1}),\varphi_h\big)_{\Ltwo}, \label{eq:fulld:c}\\[1ex]
		\big(s^{(j),n+1}_h,\varphi_h\big)_{\Ltwo} & = \big(\widehat{s}^{(j),n+1}_h,\varphi_h\big)_{\Ltwo} +  \Delta t\big(\mathcal{R}_{\bB{s}}^{(j)}(\widehat{\bB{c}}_h^{n+1},\widehat{\bB{s}}_h^{n+1}),\varphi_h\big)_{\Ltwo}, \label{eq:fulld:s}\\[1ex]
		\big(u^{n+1}_h,\varphi_h\big)_{\Ltwo} & = \big(\widehat{u}^{\, n+1}_h,\varphi_h\big)_{\Ltwo} +  \Delta t\big(\mathcal{\widetilde{R}}(\widehat{\bB{c}}_h^{n+1},\widehat{\bB{s}}_h^{n+1}),\varphi_h\big)_{\Ltwo}, \label{eq:fulld:u}\\[1ex]
		(\widetilde{u}_h^{n+1},\vartheta_h)_{\Ltwo} & = (u_h^{n+1},\vartheta_h)_{\Ltwo},\label{eq:other:u}\\[1ex]
		\big(\mu_h^{n+1},\vartheta_h\big)_{\Ltwo} & = \dfrac{\kappa}{\rho_{\rm b}}\big(\nabla \widetilde{u}_h^{n+1}, \nabla \vartheta_h\big)_{\Ltwo} + \big(\Psi'(u_h^{n+1}),\vartheta_h\big)_{\Ltwo} , \label{eq:fulld:mu}
	\end{align}
\end{subequations}
for $i=1,\dots,\nsp$ and $j=1,\dots,\ssp$, and for all test functions $\varphi_h\in\mathcal{P}_{\ell}(\Omega)$ and $\vartheta_h\in\mathcal{P}_{\ell+1}^{\rm cont}(\Omega)$, where the discrete bilinear forms are described below. 
Given $\bB{\beta}\in \mathbf{H}^1(\Omega)$ or $\bB{\beta}\in \mathbf{H}({\rm div};\Omega)$, the upwind operator $\aconv$ approximating the convective terms at \eqref{syst:scheme:fulld} is defined by
\begin{align*}
	\aconv\big(\varphi_h,w_h; \bbeta\big)&:= - \big( \varphi_h\bbeta,\nabla w_h\big)_{\Ltwo}
	+ \sum_{e\in\mathcal{E}_h^{\rm int}}\int_e\upwF(\varphi_K, \varphi_L; \bB{\beta}) \jump{w_h}, \\
	\upwF(\varphi_K, \varphi_L; \bB{\beta})  &:=  (\bB{\beta}\cdot\nn_e)_{\oplus}\varphi_K - (\bB{\beta}\cdot\nn_e)_{\ominus}\varphi_L,
\end{align*}
for $\varphi_h,w_h\in \mathcal{P}_{\ell}(\Omega)$.
Next, to define the operators $\adiff$, $\adiffc$ and $\adiffs$, we consider a vector function $\bB{\beta}\in \mathbf{L}^2(\Omega)$, possibly discontinuous across edges in $\mathcal{E}_h$, and introduce the following numerical fluxes for each $e\in \mathcal{E}^{\rm int}_h$
\begin{equation*}
	\uF(\varphi_K, \varphi_L; \bB{\beta}) :=  \bar{\beta}^e_{\oplus}\Big\{\Mup(\varphi_K) + \Mdn(\varphi_L) \Big\}
	- \bar{\beta}^e_{\ominus}\Big\{\Mup(\varphi_L) + \Mdn(\varphi_K)\Big\},
\end{equation*}
and
\begin{align*}
	\cF(\varphi_K, \varphi_L; \bB{\beta},\widetilde{\varphi}_h)  &:=  \bar{\beta}^e_{\oplus}\Big\{\MupC(\widetilde{\varphi}_K)(\varphi_K)_{\oplus} + \MdnC(\widetilde{\varphi}_L)(\varphi_L)_{\oplus} \Big\}\\
	&\qquad - \bar{\beta}^e_{\ominus}\Big\{\MupC(\widetilde{\varphi}_L)(\varphi_L)_{\oplus} + \MdnC(\widetilde{\varphi}_K)(\varphi_K)_{\oplus}\Big\},\\[1ex]
	\sF(\varphi_K, \varphi_L; \bB{\beta},\widetilde{\varphi}_h)  &:=  \bar{\beta}^e_{\oplus}M_{\bB{s}}(\widetilde{\varphi}_K)(\varphi_K)_{\oplus}  - \bar{\beta}^e_{\ominus}M_{\bB{s}}(\widetilde{\varphi}_L)(\varphi_L)_{\oplus},
\end{align*}
for all $\varphi_h, \widetilde{\varphi}_h \in \mathcal{P}_{\ell}(\Omega)$, with the short-hand notation $\bar{\beta}^e\coloneqq \avg{\bbeta}\cdot\nn_e$. Note that the three fluxes $\upwF$, $\uF$, $\cF$ and $\sF$ are upwind numerical fluxes in $\mathcal{P}_\ell(\Omega)$, where $\upwF$ is defined to approximate the convective parts in \eqref{eq:fulld:c:hat}, \eqref{eq:fulld:s:hat} and \eqref{eq:fulld:u:hat}, and $\uF$, $\cF$ and $\sF$ are related to diffusive terms.
Then, the respective upwind operators are defined as
\begin{align}
	\adiff\big(\varphi_h,w_h; \bB{\beta}\big) :=& \, -\big(M(\varphi_h) \bbeta,\nabla w_h \big)_{\Ltwo}
	+\sum_{e\in \mathcal{E}_h^{\rm int}} \int_{e} \uF(\varphi_K, \varphi_L; \bB{\beta})\jump{w_h},notag\\
	b_h^{\bB{\xi}}\big(\varphi_h,w_h; \bbeta, \widetilde{\varphi}_h\big) := &\, -\big(\varphi_h M_{\bB{\xi}}(\widetilde{\varphi}_h)\bbeta,\nabla w_h \big)_{\Ltwo}
	+ \sum_{e\in \mathcal{E}_h^{\rm int}} \int_e\widetilde{\Phi}_e^{\bB{\xi}}\big(\varphi_K,\varphi_L;\bB{\beta},\widetilde{\varphi}_h)\jump{w_h}, \label{def:op:c:s}
\end{align}
for all $\varphi_h, \widetilde{\varphi}_h, w_h \in \mathcal{P}_{\ell}(\Omega)$ and $\bB{\xi}\in\{\bB{c},\bB{s}\}$.
Observe that instead of strongly imposing that the sum of the components of vector $\bB{c}$ should be equal to the total concentration $u$ at the time step $t^{n+1}$, we built  the numerical flux $\adiffc$ in such a way that this property is in fact preserved. 
The latter is shown in Theorem~\ref{thm:maxprinciple:ODEs}.

For the Stokes system \eqref{eq:stokesmom}--\eqref{eq:stokesdiv}, a suitable approximation can be made of a stable pair of polynomial subspaces for the velocity $\bB{q}$ and pressure $p$, such that the normal component of the discrete velocity is continuous across edges in $\mathcal{E}_h$, and Equation \eqref{eq:stokesdiv} is satisfied on each $K\in \mathcal{T}_h$.
The continuity constraint can be achieved seeking for approximations of $\bB{q}$ in $\mathbf{H}(\div; \Omega)$ or $\mathbf{H}^1(\Omega)$. 
For this reason, we propose a discrete formulation of \eqref{eq:stokesmom}--\eqref{eq:stokesdiv} based on the primal formulation given in \cite[Section 8.4.3]{Boffi2013}, that is, an $H^1$-conforming approximation for $\bB{q}$ in the subspace $\spaceq \coloneqq \mathbcal{P}_{2}(\Omega)\cap \mathbf{H}_0^1(\Omega)$, and a piecewise constant approximation for $p$ in $\spacep\coloneqq\mathcal{P}_0(\Omega)\cap \L_0^2(\Omega)$. 
Then, the discrete formulation of the Stokes system is given as follows: Given $n\geq 0$, find $(\bB{q}_h^{n},p_h^{n})\in \spaceq\times \spacep$ such that
\begin{subequations} \label{syst:fulld:stokes}
	\begin{alignat}{2}
		-\big( \nu(u_h^{n})\bB{e}(\bB{q}_h^{n}), \bB{e}(\bB{w}_h) \big)_{\Ltwo} + \big(p_h^{n},\div(\bB{w}_h)\big)_{\Ltwo} & = \big(\mathbf{f}_h^{n}, \bB{w}_h\big)_{\Ltwo},&& \qquad \forall \bB{w}_h\in \spaceq, \label{eq:fulld:q}\\
		\big( \varphi_h,\div(\bB{q}_h^{n})\big)_{\Ltwo} &  = 0, && \qquad \forall \varphi_h\in \spacep, \label{eq:fulld:divq}
	\end{alignat}
\end{subequations}
where the discretized source term \eqref{eq:source:stokes} is given by
\begin{align}\label{eq:discrete:source:f}
	\mathbf{f}_h^{n} = -\dfrac{g\Delta\rho}{\rhoC}u_h^{n}\bB{k}+ \eta\Big(-\dfrac{\kappa}{\rhoC}\nabla^2 \widetilde{u}_h^{n} +  \Psi'(u_h^{n})\Big)\nabla \widetilde{u}_h^{n}.
\end{align}
Observe that a pseudo-stress formulation with $\mathbb{S}=\mathbb{S}(u,\bm{v},p)$ as an additional unknown (see, e.g., \cite{Gatica2010,Oyarzua2023}) can also be proposed for \eqref{eq:stokesmom}--\eqref{eq:stokesdiv}. However, to ensure Equation~\eqref{eq:stokesdiv} on each element $K\in\mathcal{T}_h$ with $\bB{q}_h^{n}\in \mathbf{H}(\div;\Omega)$, an extended system is required, see for instance \cite{Oyarzua2023}.
In practice, the incompressibility condition~\eqref{eq:stokesdiv}, which is present in the definition of $\L_0^2(\Omega)$, is implemented via Lagrange multipliers.

\subsection{Properties of the scheme}\label{sec:scheme:properties}
In this section, we provide an invariant-region property of the concentrations in the scheme \eqref{syst:scheme:fulld} and the well posedness for the discretized Stokes system~\eqref{syst:fulld:stokes}.
The positivity-preserving results are primarily based on the work by Acosta-Soba et al.~\cite{Acosta2023b}, wherefore we will only analyse the case $\ell=0$.
We first collect some useful relations.
Given $w,\varsigma,\alpha\in \R$ and referring to the positive and negative parts of a scalar function defined by~\eqref{eq:positive-negative:parts}, we have
\begin{align}
	w_{\oplus}w_{\ominus} & = 0,\qquad
	w (w)_\ominus  = -(w_\ominus)^2, \qquad
	w (w)_\oplus  = (w_\oplus)^2, \label{eq:prop:o}
\end{align}
and we can straightforwardly show that
\begin{align}
	\big(w - \varsigma \big)w_\ominus &\le 0\quad\text{for}\quad \varsigma \geq 0. \label{eq:ineq:o:1}
\end{align}
In addition, given $\varphi_h\in \mathcal{P}_0(\Omega)$, we let $K_{\circ}, K^{\diamond} \in \mathcal{T}_h$ be two elements such that
\begin{align*}
	\varphi_{K_{\circ}} = \min_{L\in\mathcal{T}_h}\{\varphi_L\},\qquad \varphi_{K^{\diamond}} = \max_{L\in\mathcal{T}_h}\{\varphi_L\}.
\end{align*}
Note that $K_{\circ}$ and $K^{\diamond}$ may not be uniquely defined, in which case we will consider those with the highest index in an ordered mesh $\mathcal{T}_h$.
Then, given $\alpha\in \R$, we define the operators $\Pi_{\ominus}^{\alpha}:\mathcal{P}_0(\Omega)\to \mathcal{P}_0(\Omega)$ and $\Pi_{\oplus}^{\alpha}:\mathcal{P}_0(\Omega)\to \mathcal{P}_0(\Omega)$ such that for all $\varphi_h\in \mathcal{P}_0(\Omega)$
\begin{align*}
	\Pi_{\ominus}^\alpha(\varphi_h) =
	\begin{cases}
		(\varphi_{K_{\circ}} - \alpha)_{\ominus}&\text{in }K_{\circ},\\
		0&\text{otherwise},
	\end{cases}\qquad
	\Pi_{\oplus}^{\alpha}(\varphi_h) =
	\begin{cases}
		(\varphi_{K^{\diamond}} - \alpha)_{\oplus}&\text{in }K^{\diamond},\\
		0&\text{otherwise}.
	\end{cases}
\end{align*}
In the following lemma, we introduce some useful properties of $\aconv$ and $\adiff$, which will be employed when showing the maximum principle for the discrete concentrations.

\begin{lemma}\label{lem:auxiliar}
	Given $\bB{\beta}\in \mathbf{H}(\div;\Omega)$ or $\bB{\beta}\in \mathbf{H}^1(\Omega)$, such that $\div(\bB{\beta}) = 0$ in $\Omega$, and given $\varphi_h\in \mathcal{P}_0(\Omega)$, and $\alpha \in \mathbb{R}$ constant, then
	\begin{align} \label{ineq:aux:aconv}
		\aconv\big(\varphi_h, \Pi_{\ominus}^{\alpha}(\varphi_h); \bB{\beta}\big) \le 0\qquad\text{and}\qquad
		\aconv\big(\varphi_h, \Pi_{\oplus }^{\alpha}(\varphi_h); \bB{\beta}\big) \ge 0.
	\end{align}
\end{lemma}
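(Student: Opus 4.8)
The plan is to exploit the piecewise-constant structure ($\ell = 0$), the special form of the test functions $\Pi_{\ominus}^{\alpha}(\varphi_h)$ and $\Pi_{\oplus}^{\alpha}(\varphi_h)$, and the divergence-free property of $\bbeta$. I will prove the first inequality in \eqref{ineq:aux:aconv}; the second follows by an analogous argument (or by replacing $\varphi_h$ with $-\varphi_h$ and $\alpha$ with $-\alpha$). First I would note that since $\varphi_h$ is piecewise constant, the volume term $-(\varphi_h\bbeta,\nabla w_h)_{0,\Omega}$ in $\aconv$ vanishes when $w_h = \Pi_{\ominus}^{\alpha}(\varphi_h)$, because $\nabla w_h = \bm 0$ on the interior of every element (where $w_h$ is constant). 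Hence only the edge sum survives:
\[
\aconv\big(\varphi_h,\Pi_{\ominus}^{\alpha}(\varphi_h);\bbeta\big) = \sum_{e\in\mathcal{E}_h^{\rm int}} \int_e \upwF(\varphi_K,\varphi_L;\bbeta)\,\jump{\Pi_{\ominus}^{\alpha}(\varphi_h)}.
\]

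Next I would observe that $\Pi_{\ominus}^{\alpha}(\varphi_h)$ is supported on the single element $K_\circ$ where $\varphi_h$ attains its minimum, with value $m := (\varphi_{K_\circ} - \alpha)_\ominus \ge 0$. Therefore $\jump{\Pi_{\ominus}^{\alpha}(\varphi_h)}$ is nonzero only on edges $e$ incident to $K_\circ$; on such an edge $e = \partial K_\circ \cap \partial L$, the jump equals $\pm m$ depending on the orientation of $\nn_e$. The key point is then to combine the contributions of all edges incident to $K_\circ$. Writing $\bar\beta_\circ := \int_{\partial K_\circ}(\bbeta\cdot\nn)$ where $\nn$ is the outward normal of $K_\circ$, the divergence theorem together with $\div(\bbeta)=0$ gives $\bar\beta_\circ = \int_{K_\circ}\div(\bbeta) = 0$. (This is where I need $\bbeta\in\mathbf{H}(\div;\Omega)$ or $\mathbf{H}^1(\Omega)$ so the flux integrals make sense and edge contributions telescope; for $\mathbf H^1$ the trace is classical, for $\mathbf H(\div)$ one uses the normal-trace pairing.) Splitting the boundary integral over the incident edges and using the upwind structure $\upwF = (\bbeta\cdot\nn_e)_\oplus\varphi_K - (\bbeta\cdot\nn_e)_\ominus\varphi_L$ together with $w_\oplus - w_\ominus = w$ and the nonnegativity of $w_\oplus,w_\ominus$, one arrives (after accounting for orientation so that $\varphi_{K_\circ}$ is consistently the "inflow'' value on outflow edges and the "outflow'' value on inflow edges) at an expression of the form
\[
\aconv\big(\varphi_h,\Pi_{\ominus}^{\alpha}(\varphi_h);\bbeta\big) = m\sum_{\substack{e\ \text{incident}\\ \text{to }K_\circ}} \Big( (\beta^e_\circ)_\oplus(\varphi_{K_\circ} - \varphi_{L_e}) \Big) + m\,\varphi_{K_\circ}\,\bar\beta_\circ,
\]
where $L_e$ is the neighbour across $e$ and $\beta^e_\circ := \bbeta\cdot\nn$ with $\nn$ outward from $K_\circ$. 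The last term vanishes by $\bar\beta_\circ = 0$.

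Finally I would close the argument by sign analysis: since $\varphi_{K_\circ} \le \varphi_{L_e}$ for every neighbour (by definition of $K_\circ$ as a minimizer), each factor $\varphi_{K_\circ} - \varphi_{L_e} \le 0$, while $m \ge 0$ and $(\beta^e_\circ)_\oplus \ge 0$; hence every summand is $\le 0$ and the whole expression is $\le 0$. The main obstacle I anticipate is purely bookkeeping: carefully tracking the orientation of $\nn_e$ versus the outward normal of $K_\circ$ across all incident edges so that the telescoping into $\bar\beta_\circ$ is correct, and making sure the cross-terms involving $\varphi_{L_e}$ that are \emph{not} of the form $(\beta^e_\circ)_\oplus\varphi_{K_\circ}$ recombine correctly — a standard but slightly tedious manipulation of the upwind flux using $(w)_\oplus - (w)_\ominus = w$. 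The regularity hypothesis on $\bbeta$ is used only to justify $\int_{\partial K_\circ}\bbeta\cdot\nn = 0$; once that identity is in hand, the argument is entirely algebraic. For the second inequality, one repeats the computation with $K^\diamond$ the maximizer, where now $\varphi_{K^\diamond} - \varphi_{L_e} \ge 0$ for all neighbours, yielding the reversed sign.
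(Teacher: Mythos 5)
Your proposal is correct and follows essentially the same route as the paper's proof: restrict to the edges of the minimizing element $K_{\circ}$, use $\varphi_{K_{\circ}}\le\varphi_{L}$ for all neighbours, and kill the remaining term with $\int_{\partial K_{\circ}}\bbeta\cdot\nn=0$ from the divergence theorem (the paper phrases it as an inequality $-\beta^e_{\ominus}\varphi_L\le-\beta^e_{\ominus}\varphi_{K_{\circ}}$ followed by the telescoping identity, rather than your decomposition-first order, but the content is identical). One small algebraic slip: with $\beta^e_{\circ}$ the outward normal component, the correct splitting is $(\beta^e_{\circ})_{\oplus}\varphi_{K_{\circ}}-(\beta^e_{\circ})_{\ominus}\varphi_{L_e}=\beta^e_{\circ}\,\varphi_{K_{\circ}}+(\beta^e_{\circ})_{\ominus}(\varphi_{K_{\circ}}-\varphi_{L_e})$, so the factor multiplying $(\varphi_{K_{\circ}}-\varphi_{L_e})$ should be $(\beta^e_{\circ})_{\ominus}$, not $(\beta^e_{\circ})_{\oplus}$ (with $\oplus$ the residual term involves $\varphi_{L_e}$ and does not telescope to $\varphi_{K_{\circ}}\bar\beta_{\circ}$); since both parts are nonnegative, your sign analysis and conclusion are unaffected.
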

\begin{proof}
	See Appendix \ref{appendix:A}.
\end{proof}

Next, we show the existence of solutions to the discretized extended Cahn--Hilliard system \eqref{syst:scheme:fulld}. 
This result is, in fact, an extension to \cite[Theorem 3.13]{Acosta2023a}, and therefore following the reasoning therein exposed we can show the following theorem.

\begin{theorem} \label{thm:existence:u}
	Given the solution $\bB{q}^n_h\in \spaceq$ to~\eqref{syst:fulld:stokes}, and $\bB{c}^{n}_h\in [\mathcal{P}_0(\Omega)]^{\nsp}$, $\bB{s}_h^n\in [\mathcal{P}_0(\Omega)]^{\ssp}$ and $u_h^{\, n}\in \mathcal{P}_0(\Omega)$ such that $\bB{0}\le \bB{c}^n_h\le \rho_{\rm b}\bB{1}$, $\bB{s}_h^n\geq\bB{0}$ and $0\leq u^{\, n}_h\leq \rho_{\rm b}$ in $\overline\Omega$, then there exists a solution to the scheme \eqref{syst:scheme:fulld}.
\end{theorem}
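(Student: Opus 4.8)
The plan is to establish existence of a solution to the fully discrete, nonlinear system~\eqref{syst:scheme:fulld} by a topological fixed-point argument, following the structure of \cite[Theorem 3.13]{Acosta2023a}. The key observation is that the system decouples nicely: the reaction post-processing steps~\eqref{eq:fulld:c}--\eqref{eq:fulld:u} are \emph{explicit} once the hatted quantities are known (they express $c_h^{(i),n+1}$, $s_h^{(j),n+1}$, $u_h^{n+1}$ as affine functions of $\widehat{\bB c}_h^{n+1},\widehat{\bB s}_h^{n+1}$), and the auxiliary identity~\eqref{eq:other:u} together with~\eqref{eq:fulld:mu} expresses $\widetilde u_h^{n+1}$ and $\mu_h^{n+1}$ in terms of $u_h^{n+1}$; hence $\mu_h^{n+1}$ is ultimately a (nonlinear, continuous) function of the hatted unknowns. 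First I would reduce the problem to finding the hatted triple $(\widehat u_h^{\,n+1},\widehat{\bB c}_h^{n+1},\widehat{\bB s}_h^{n+1})$ solving~\eqref{eq:fulld:u:hat}--\eqref{eq:fulld:s:hat}, viewed as a single nonlinear equation $G(\bB z)=\bB 0$ on the finite-dimensional space $\mathcal P_0(\Omega)^{1+\nsp+\ssp}$, where $\mu_h^{n+1}=\mu_h^{n+1}(\widehat u_h^{\,n+1})$ has been substituted in through the chain described above.

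\smallskip
Next I would set up a homotopy/degree argument (or Brouwer via Leray--Schauder, as in \cite{Acosta2023a}): consider the map $T_\tau$, $\tau\in[0,1]$, interpolating between the full operator and a simple solvable one (e.g.\ $\tau$ scaling the convective and degenerate-diffusion bilinear forms $\aconv$, $\adiff$, $\adiffc$, $\adiffs$), and show that any solution of $T_\tau(\bB z)=\bB 0$ stays in a fixed ball. The a priori bound is obtained by testing~\eqref{eq:fulld:u:hat} with $\widehat u_h^{\,n+1}$ itself (and each component equation~\eqref{eq:fulld:c:hat}, \eqref{eq:fulld:s:hat} likewise): the time-derivative term gives $\tfrac1{\Delta t}\|\widehat u_h^{\,n+1}\|_{0,\Omega}^2$ up to a term controlled by $\|u_h^n\|_{0,\Omega}$; the convective form $\aconv$ is nonnegative when tested against its own argument provided $\div(\bB q_h^n)=0$, which holds by~\eqref{eq:fulld:divq}; and the degenerate-diffusion form, after inserting $\mu_h^{n+1}$ via~\eqref{eq:fulld:mu}, produces a $\kappa$-weighted $H^1$-seminorm of $\widetilde u_h^{n+1}$ plus a term involving $\Psi'$, which is bounded by~\eqref{eq:bound:dPsi}. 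Since everything is finite-dimensional, continuity of $G$ (all the model functions $M$, $M_{\bB c}$, $M_{\bB s}$, $\Psi'$, and the positive/negative-part operators are continuous) plus the uniform a priori estimate yields a zero of $G$ by a standard consequence of Brouwer's theorem; then the remaining unknowns are recovered by the explicit formulas, which also guarantees they lie in the stated spaces.

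\smallskip
I would also need to check that $\mu_h^{n+1}$, the solution of~\eqref{eq:fulld:mu} with~\eqref{eq:other:u}, is well-defined and depends continuously on $u_h^{n+1}$: this is a linear elliptic problem in $\mathcal P^{\rm cont}_{\ell+1}(\Omega)$ (here $\ell=0$) with coercive bilinear form $\tfrac{\kappa}{\rhoC}(\nabla\cdot,\nabla\cdot)_{\Ltwo}+(\cdot,\cdot)_{\Ltwo}$ on the continuous $\mathcal P_1$ space, so Lax--Milgram applies on the finite-dimensional subspace and the dependence on the $\Psi'(u_h^{n+1})$ data is continuous. The only subtlety is that $\mu_h^{n+1}$ enters~\eqref{eq:fulld:u:hat}--\eqref{eq:fulld:s:hat} through its gradient $-\nabla\mu_h^{n+1}$ as the ``velocity'' argument of $\adiff$, $\adiffc$, $\adiffs$, so the circular coupling (hatted unknowns $\to u_h^{n+1}\to\mu_h^{n+1}\to$ hatted equations) must be genuinely resolved as one fixed-point problem rather than sequentially; this is exactly why a degree argument rather than a contraction is used.

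\smallskip
\textbf{Main obstacle.} The principal difficulty I anticipate is obtaining the \emph{uniform} a priori bound on the homotopy solutions in the presence of the degenerate, non-monotone mobilities split as $M^\uparrow+M^\downarrow$ (and $M_{\bB c}^\uparrow+M_{\bB c}^\downarrow$): the upwind fluxes $\uF$, $\cF$, $\sF$ mix $M^\uparrow$ at one neighbour with $M^\downarrow$ at the other, so the diffusion form $\adiff(\widehat u_h^{\,n+1},\widehat u_h^{\,n+1};-\nabla\mu_h^{n+1})$ is not obviously sign-definite. One must exploit the structural identities $uM_{\bB c}^\uparrow(u)=M^\uparrow(u)$, $uM_{\bB c}^\downarrow(u)=M^\downarrow(u)$ together with the boundedness $0\le M^\uparrow\le M(u_{\rm mid})$ and $M(u_{\rm mid})+M^\downarrow\ge 0$ to absorb the bad terms — essentially the same manipulation that underlies Lemma~\ref{lem:auxiliar} — and combine it with Young's inequality against the coercive $\mu_h^{n+1}$-contribution. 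Because $\ell=0$, inverse inequalities on the mesh (using~\eqref{eq:admissiblemesh}) let one trade the $H^1$-seminorm of $\widetilde u_h^{n+1}$ for an $L^2$-norm at the price of an $h$-dependent constant, which is acceptable for a pure existence statement at fixed $h$. Once this bound is in hand, the rest is routine finite-dimensional topology.
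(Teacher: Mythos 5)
Your overall strategy (reduce to a finite-dimensional nonlinear system, run a Leray--Schauder/degree argument, recover the remaining unknowns explicitly) is the same as the paper's, and your observations about the explicit reaction steps and the chain $\widehat{u}_h\to u_h\to\widetilde u_h\to\mu_h$ are correct. The gap is in the a priori bound, which is the heart of the argument. You propose to test \eqref{eq:fulld:u:hat} (and its companions) with the solution itself and close an $\L^2$ energy estimate, but you yourself note that $\adiff(\widehat u_h^{\,n+1},\widehat u_h^{\,n+1};-\nabla\mu_h^{n+1})$ is not sign-definite because the upwind flux $\uF$ pairs $\Mup$ at one neighbour with $\Mdn$ at the other. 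Your proposed repair does not work as described: the claim that inserting \eqref{eq:fulld:mu} ``produces a $\kappa$-weighted $H^1$-seminorm of $\widetilde u_h^{n+1}$'' would require testing the $u$-equation with $\mu_h^{n+1}$ (which is how the continuous energy law arises), not with $\widehat u_h^{\,n+1}$; with your choice of test function no coercive $\mu$-contribution appears against which to apply Young's inequality, and the identities $u\MupC(u)=\Mup(u)$, $u\MdnC(u)=\Mdn(u)$ do not by themselves give a sign. So the uniform bound on the homotopy solutions is not actually established.

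The paper closes this step by a different and much softer mechanism that you are missing: it first invokes the maximum-principle machinery (the arguments of Lemmas~\ref{lem:maxprinciple:u}, \ref{lem:maxprinciple:c} and \ref{lem:maxprinciple:s}, which use the test functions $\Pi_{\ominus}^{0}$, $\Pi_{\oplus}^{\rhoC}$ and the inequalities \eqref{ineq:aux:aconv}) to show that any solution in the Leray--Schauder set $\mathfrak{B}$ is non-negative, and then obtains $\mathrm{L}^1$ bounds simply by testing with $\varphi_h=1$ (mass conservation plus the reaction bound $\boundr_{\max}$ of Assumption~\ref{asumption:reactionterms}); equivalence of norms on $\mathcal{P}_0(\Omega)$ and $\mathcal{P}_1^{\rm cont}(\Omega)$ then yields boundedness of $\mathfrak{B}$ without ever confronting the sign of the diffusion form. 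The bounds on $\widetilde u_h$ and $\mu_h$ follow from $0\le u_h\le\rhoC$, \eqref{eq:bound:dPsi} and finite-dimensional norm equivalence. A secondary structural difference: the paper's fixed-point map $\mathbf{T}_h$ freezes the diffusion forms at the previous iterate so that each sub-problem is linear and uniquely solvable, rather than scaling the bilinear forms by a homotopy parameter; this is cosmetic, but it is what makes well-definedness of the map immediate. To repair your proof you should replace the $\L^2$ energy estimate by the positivity-then-$\L^1$ argument; as written, the key estimate does not close.
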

\begin{proof}
The proof is based on that of \cite[Theorem 3.13]{Acosta2023b}. For the ease of reading, we drop the superscript $n+1$ of the unknowns, and introduce the following product spaces
\begin{align*}
	\mathbf{Q}_h^{0} := \mathcal{P}_0(\Omega)\times [\mathcal{P}_0(\Omega)]^{\nsp}\times[\mathcal{P}_0(\Omega)]^{\ssp}\quad\text{and}\quad \mathbf{Q}_h := \mathbf{Q}_h^{0}\times \mathcal{P}_1^{\rm cont}(\Omega)\times \mathcal{P}_1^{\rm cont}(\Omega).
\end{align*}
Then, we define the discrete operator $\mathbf{T}_h:\mathbf{Q}_h^{0}\times\mathbf{Q}_h\mapsto \mathbf{Q}_h^{0}\times\mathbf{Q}_h
$ such that for every $\widehat{\bB{u}}_h^* = (\widehat{u}_h^*,\widehat{\bB{c}}_h^*,\widehat{\bB{s}}_h^*)\in \mathbf{Q}_h^0$ and $\bB{u}_h^* = \big(u_h^*,\bB{c}_h^*,\bB{s}_h^*,\widetilde{u}_h^*,\mu_h^*\big)\in \mathbf{Q}_h$, there holds
\begin{align*}
	\mathbf{T}_h(\widehat{\bB{u}}_h^*,\bB{u}_h^*) = (\widehat{\bB{u}}_h,\bB{u}_h)\in \mathbf{Q}_h^{0}\times\mathbf{Q}_h,
\end{align*}
where $\widehat{\bB{u}}_h = (\widehat{u}_h,\widehat{\bB{c}}_h,\widehat{\bB{s}}_h)\in \mathbf{Q}_h^0$ and
$\bB{u}_h = \big(u_h,\bB{c}_h,\bB{s}_h,\widetilde{u}_h,\mu_h\big)\in \mathbf{Q}_h$ are the components of the solution to the following partially linearized system:
\begin{subequations}\label{syst:FP}
	\begin{align}
		\dfrac{1}{\Delta t}\big(\widehat{u}_h - u_h^{n},\varphi_h\big)_{\Ltwo} + \aconv\big(\widehat{u}_h,\varphi_h; \bB{q}_h^{n}\big) & =  - \adiff\big(\widehat{u}_h^*,\varphi_h; -\nabla\mu_h^*\big), \label{eq:FP:u:hat}
		\\
		\dfrac{1}{\Delta t}\big(\widehat{c}^{\, (i)}_h - c^{(i),n}_h,\varphi_h\big)_{\Ltwo} + \aconv\big(\widehat{c}^{\, (i)}_h, \varphi_h; \bB{q}_h^{n}\big) & = - \adiffc\big(\widehat{c}^{\,(i),*}_h, \varphi_h;  - \nabla\mu_h^*, \widehat{u}_h^*\big), \label{eq:FP:c:hat}
		\\
		\dfrac{1}{\Delta t} \big(\widehat{s}^{(j)}_h - s^{(j),n}_h,\varphi_h\big)_{\Ltwo} + \aconv\big(\widehat{s}^{(j)}_h,\varphi_h; \bB{q}_h^{n}\big) & =  - \adiffs\big(\widehat{s}^{(j),*}_h,\varphi_h; \nabla\mu_h^*, \widehat{u}_h^*\big)
		, \label{eq:FP:s:hat}
	\end{align}
	with $i = 1,\dots,\nsp$ and $j=1,\dots,\ssp$, coupled to
	\begin{align}
		\bB{c}_K & = \widehat{\bB{c}}_K +  \Delta t\, \mathbcal{R}_{\bB{c}}\big(\widehat{\bB{c}}_K,\widehat{\bB{s}}_K\big), \label{eq:FP:c}\\[1ex]
		\bB{s}_K & = \widehat{\bB{s}}_K +  \Delta t\, \mathbcal{R}_{\bB{s}}\big(\widehat{\bB{c}}_K,\widehat{\bB{s}}_K\big), \label{eq:FP:s}\\[1ex]
		u_K & = \widehat{u}_K +  \Delta t\, \mathcal{\widetilde{R}}(\widehat{\bB{c}}_K,\widehat{\bB{s}}_K), \label{eq:FP:u}\\
		(\widetilde{u}_h,\vartheta_h)_{\Ltwo} &  = (u_h,\vartheta_h)_{\Ltwo},\label{eq:FP:other:u}\\
		\big(\mu_h,\vartheta_h\big)_{\Ltwo}  & =   \dfrac{\kappa}{\rho_{\rm b}}\big(\nabla \widetilde{u}_h, \nabla \vartheta_h\big)_{\Ltwo} + \big(\Psi'(u_h),\vartheta_h\big)_{\Ltwo}, \label{eq:FP:mu}
	\end{align}
\end{subequations}
for all $K\in \mathcal{T}_h$, $\varphi_h\in \mathcal{P}_0(\Omega)$ and $\vartheta_h\in \mathcal{P}_1^{\rm cont}(\Omega)$. 
As mentioned in \cite{Acosta2023b}, it is easy to show that for $\bB{q}_h^n\in \mathcal{P}_{2,0}^{\rm cont}$  such that $\div(\bB{q}_h^n)=0$ in $\Omega$, there exist unique $\widehat{u}_h$, $\widehat{\bB{c}}_h$ and $\widehat{\bB{s}}_h$ to \eqref{eq:FP:u:hat}, \eqref{eq:FP:c:hat} and \eqref{eq:FP:s:hat}, respectively. 
It is straightforward to observe that there exist unique solutions $\bB{c}_h$, $\bB{s}_h$ and $u_h$ solutions to~\eqref{eq:FP:c}, \eqref{eq:FP:s} and \eqref{eq:FP:u}, respectively, and subsequently there exists a unique solution $\widetilde{u}_h\in \mathcal{P}_1^{\rm cont}(\Omega)$ to~\eqref{eq:FP:other:u}.
Then, there exists a unique solution $\mu_h\in \mathcal{P}_1^{\rm cont}(\Omega)$ to~\eqref{eq:FP:mu} and the operator $\mathbf{T}_h$ is well defined. 
The continuity of $\mathbf{T}_h$ can be proved with the same arguments used in the proof of \cite[Theorem 3.13]{Acosta2023b}, and since $\mathbf{Q}_h^0$ and $\mathbf{Q}_h$ are finite-dimensional spaces, $\mathbf{T}_h$ is compact. 

Next, we define the following set
\begin{align*}
	\mathfrak{B}\coloneqq\Big\{(\widehat{\bB{u}}_h,\bB{u}_h)\in \mathbf{Q}_h^0\times \mathbf{Q}_h: \quad (\widehat{\bB{u}}_h,\bB{u}_h)=\alpha \mathbf{T}_h(\widehat{\bB{u}}_h,\bB{u}_h)\quad \text{ for some } 0\le\alpha\le1\Big\},
\end{align*}
which we will prove is bounded independently of $\alpha$. 
Assuming $\alpha\in(0,1]$, omitting the trivial case $\alpha=0$, we observe that if $(\widehat{\bB{u}}_h,\bB{u}_h)\in \mathfrak{B}$, then $\widehat{u}_h\in\mathcal{P}_h(\Omega)$ is the solution to
\begin{align} \label{eq:aux:FP:uhat}
	\dfrac{1}{\Delta t}\big(\widehat{u}_h - \alpha u_h^{n},\varphi_h\big)_{\Ltwo} + \aconv\big(\widehat{u}_h,\varphi_h; \bB{q}_h^{n}\big) & =  - \adiff\big(\widehat{u}_h,\varphi_h; -\nabla\mu_h\big),\qquad \forall \varphi_h\in \mathcal{P}_h(\Omega).
\end{align}
Now, in the same way as in Lemma~\ref{lem:maxprinciple:u}, we can conclude that the solution to \eqref{eq:aux:FP:uhat} satisfies that $\widehat{u}_h\geq 0$. 
Then, replacing $\varphi_h = 1\in\mathcal{P}_0(\Omega)$ into \eqref{eq:aux:FP:uhat}, we have
\begin{align*}
	\|\widehat{u}_h\|_{\mathrm{L}^1(\Omega)} = \int_\Omega \widehat{u}_h = \alpha \int_\Omega u_h^n\,\leq\, \|u_h^n\|_{\mathrm{L}^1(\Omega)}.
\end{align*}

Analogously, following the proofs of Lemmas~\ref{lem:maxprinciple:c} and \ref{lem:maxprinciple:s}, we can show that the solutions to \eqref{eq:FP:c:hat} and \eqref{eq:FP:s:hat} fulfil $\widehat{c}_h^{(i)}\geq 0$ and $\widehat{s}_h^{(j)}\geq 0$, respectively, for all $i=1,\dots,\nsp$ and $j = 1,\dots,\ssp$. 
Therefore, from Equations~\eqref{eq:FP:c:hat} and \eqref{eq:FP:s:hat}, we get the respective bounds:
\begin{align*}
	\|\widehat{c}_h^{(i)}\|_{\mathrm{L}^1(\Omega)} \leq \|c_h^{(i),n}\|_{\mathrm{L}^1(\Omega)}\quad\text{and}\quad
	\|\widehat{s}_h^{(j)}\|_{\mathrm{L}^1(\Omega)} \leq \|s_h^{(j),n}\|_{\mathrm{L}^1(\Omega)}.
\end{align*}
In addition, adding up over all $K\in\mathcal{T}_h$ each Equation~\eqref{eq:FP:c}, \eqref{eq:FP:s} and \eqref{eq:FP:u}, respectively, and using Assumption~\ref{asumption:reactionterms}, we have
\begin{align*}
	\|c_h^{(i)}\|_{\mathrm{L}^1(\Omega)}  &\leq \|c_h^{(i),n}\|_{\mathrm{L}^1(\Omega)} + \Delta t\,|\Omega|\boundr_{\max},\\
	\|s_h^{(j)}\|_{\mathrm{L}^1(\Omega)} &\leq \|s_h^{(j),n}\|_{\mathrm{L}^1(\Omega)} + \Delta t\,|\Omega|\boundr_{\max},\\
	\|u_h\|_{\mathrm{L}^1(\Omega)} &\leq \|u_h^{n}\|_{\mathrm{L}^1(\Omega)} + \Delta t\,|\Omega|\boundr_{\max}.
\end{align*}
Moreover, $\widetilde{u}_h\in \mathcal{P}_1^{\rm cont}(\Omega)$ is the solution to~\eqref{eq:FP:other:u}.
Setting $\vartheta_h = \widetilde{u}_h$ and using that $0\leq u_h\leq \rhoC$ (see for instance Theorem~\ref{thm:maxprinciple:ODEs}), we obtain
\begin{align*}
	\|\widetilde{u}_h\|_{\Ltwo}^2 \,=\, (u_h , \widetilde{u}_h)_{\Ltwo} \le \rhoC\|\widetilde{u}_h\|_{\mathrm{L}^1(\Omega)}
	\le \rhoC\vert \Omega\vert ^{1/2} \|\widetilde{u}_h\|_{\Ltwo},
\end{align*}
which directly implies the bound $\|\widetilde{u}_h\|_{\Ltwo} \le \rhoC\vert \Omega\vert^{1/2}$.
Into \eqref{eq:FP:mu}, we substitute $\vartheta_h = \mu_h$, where $\mu_h\in \mathcal{P}_h^{\rm cont}(\Omega)$ is the solution to~\eqref{eq:FP:mu}, use the Cauchy--Schwarz inequality and the equivalence of norms in finite-dimensional spaces to get
\begin{align*}
	\|\mu_h\|_{\Ltwo}^2 &
	\le \dfrac{\kappa}{\rho_{\rm b}}\|\nabla \widetilde{u}_h\|_{\Ltwo}\|\nabla \mu_h\|_{\Ltwo} + \|\Psi'(u_h)\|_{\Ltwo}\|\mu_h\|_{\Ltwo}\\
	&\le \dfrac{\kappa}{\rho_{\rm b}}\|\widetilde{u}_h\|_{1,\Omega}\|\mu_h\|_{1,\Omega}+\|\Psi'(u_h)\|_{\Ltwo}\|\mu_h\|_{\Ltwo}\\
	&\le \Big(\delta^2 \dfrac{\kappa}{\rho_{\rm b}} \|\widetilde{u}_h\|_{\Ltwo}+\|\Psi'(u_h)\|_{\Ltwo}\Big)\|\mu_h\|_{\Ltwo},
\end{align*}
where the constant $\delta>0$ is related to the equivalence between the norms $\|\cdot\|_{1,\Omega}$ and $\|\cdot\|_{\Ltwo}$ in the finite-dimensional subspace $\mathcal{P}_1^{\rm cont}(\Omega)$. 
Then, since $0\le u_h\le \rhoC$, and using the bound $|\Psi'(u_h)|\leq \Psi_{\max}'$ (cf.~\eqref{eq:bound:dPsi}), we have
\begin{align*}
	\|\mu_h\|_{\Ltwo} \le  \delta^2\dfrac{\kappa}{\rho_{\rm b}}\|\widetilde{u}_h\|_{\Ltwo}
	+|\Omega|^{1/2}\Psi'_{\max}.
\end{align*}
Finally, since in $\mathcal{P}_0(\Omega)$ and $\mathcal{P}_1^{\rm cont}(\Omega)$ all norms are equivalent, we can conclude that $\mathfrak{B}$ is bounded. 
Thus, using the Leray--Schauder fixed point theorem \cite[Theorem 2.6]{Acosta2023b}, we conclude that there exists a solution $(\widehat{\bB{u}}_h, \bB{u})\in \mathbf{Q}_h^0\times \mathbf{Q}_h$ to the scheme \eqref{syst:scheme:fulld}.
\end{proof}

In what follows, we will show the invariant-region-preserving properties for the auxiliary unknowns $\widehat{u}_h^{n+1}$, $\widehat{\bB{c}}_h^{n+1}$ and $\widehat{\bB{s}}_h^{n+1}$, and for the actual concentration unknown solutions $u_h^{n+1}$, $\bB{c}_h^{n+1}$ and $\bB{s}_h^{n+1}$ to~\eqref{syst:scheme:fulld}. 
We start with the auxiliary solution $\widehat{u}_h^{n+1}\in \mathcal{P}_0(\Omega)$ to~\eqref{eq:fulld:u:hat}.
Its boundedness is shown in \cite[Theorem 3.11]{Acosta2023b} by making use of test functions $\varphi_h = \Pi_{\ominus}^{0}(\widehat{u}_h^{n+1})$ and $\varphi_h = \Pi_{\oplus}^{\rhoC}(\widehat{u}_h^{n+1})$ (in our notation) and further bounds on $\aconv$ and $\adiff$. 
Thus, we omit the proof of the next lemma.

\begin{lemma} \label{lem:maxprinciple:u}
	Given the solution $\bB{q}^n_h\in \spaceq$ to~\eqref{syst:fulld:stokes}, $\mu_{h}^{n+1}\in \mathcal{P}^{\rm cont}_{1}(\Omega)$ and $u^n_h\in\mathcal{P}_0(\Omega)$ such that $0\le u^n_h\le \rho_{\rm b}$ in $\overline\Omega$, then $\widehat{u}^{\, n+1}_h$ computed from Equation~\eqref{eq:fulld:u:hat} satisfies $0\le \widehat{u}^{\, n+1}_h\le \rho_{\rm b}$ in $\overline\Omega$.
\end{lemma}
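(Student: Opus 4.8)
The plan is to derive the two one‑sided bounds separately, each by testing the single equation \eqref{eq:fulld:u:hat} against a nonnegative test function supported on one element, namely the minimizer for the lower bound and the maximizer for the upper bound, exactly as in the argument of \cite[Theorem~3.11]{Acosta2023b} transcribed to the present degenerate mobility; I would only need to check that the extra factor $(1-u/\rho_{\rm b})^{\gamma}$ in $M$ changes no signs. In both cases the strategy is the same: show that if the bound were violated, then all three terms in \eqref{eq:fulld:u:hat} would carry the same strict sign, which is impossible because they sum to zero.

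For the lower bound I would set $\varphi_h=\Pi_{\ominus}^{0}(\widehat u_h^{\,n+1})$, which vanishes off the minimizing element $K_\circ$ and equals $(\widehat u_{K_\circ}^{\,n+1})_{\ominus}$ there. The accumulation term then equals $\tfrac{|K_\circ|}{\Delta t}(\widehat u_{K_\circ}^{\,n+1}-u_{K_\circ}^{n})(\widehat u_{K_\circ}^{\,n+1})_{\ominus}$, which is $\le 0$ by \eqref{eq:ineq:o:1} since $u_{K_\circ}^{n}\ge 0$, and strictly negative if $\widehat u_{K_\circ}^{\,n+1}<0$. The convective term $\aconv(\widehat u_h^{\,n+1},\Pi_{\ominus}^{0}(\widehat u_h^{\,n+1});\bm q_h^{n})$ is $\le 0$ by Lemma~\ref{lem:auxiliar}, using that $\bm q_h^{n}$ is divergence-free (cf.\ the Stokes solve \eqref{syst:fulld:stokes}). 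For the diffusive term I would use that $\widehat u_h^{\,n+1}$ and $\varphi_h$ are piecewise constant, so the volume part of $\adiff$ drops and only $\sum_{e\in\mathcal E_h^{\rm int}}\int_e \uF(\widehat u_K^{\,n+1},\widehat u_L^{\,n+1};-\nabla\mu_h^{n+1})\jump{\varphi_h}$ remains; this sum runs effectively only over the edges of $K_\circ$, and on those edges $M(\widehat u_{K_\circ}^{\,n+1})=M^\uparrow(\widehat u_{K_\circ}^{\,n+1})=M^\downarrow(\widehat u_{K_\circ}^{\,n+1})=0$ once $\widehat u_{K_\circ}^{\,n+1}<0$ (the mobility is extended by zero and $\widehat u_{K_\circ}^{\,n+1}\le u_{\rm mid}$). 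Plugging this into the definition of $\uF$ and using $M^\uparrow\ge 0$, $M^\downarrow\le 0$ together with the sign of $\jump{\varphi_h}$ for the two possible orientations of an edge of $K_\circ$ shows that each edge contribution is $\le 0$. Hence the three terms are all $\le 0$ with the first strictly negative if $\widehat u_{K_\circ}^{\,n+1}<0$, contradicting \eqref{eq:fulld:u:hat}; so $\widehat u_{K_\circ}^{\,n+1}\ge 0$, and $K_\circ$ being the minimizer, $\widehat u_h^{\,n+1}\ge 0$ in $\overline\Omega$.

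The upper bound is symmetric with $\varphi_h=\Pi_{\oplus}^{\rho_{\rm b}}(\widehat u_h^{\,n+1})$, supported on the maximizer $K^\diamond$. The accumulation term equals $\tfrac{|K^\diamond|}{\Delta t}(\widehat u_{K^\diamond}^{\,n+1}-u_{K^\diamond}^{n})(\widehat u_{K^\diamond}^{\,n+1}-\rho_{\rm b})_{\oplus}$, which is $\ge 0$ (strictly, if $\widehat u_{K^\diamond}^{\,n+1}>\rho_{\rm b}$) by the elementary inequality $(w-\varsigma)(w-\rho_{\rm b})_{\oplus}\ge 0$ valid for $\varsigma\le\rho_{\rm b}$, applied with $\varsigma=u_{K^\diamond}^{n}\le\rho_{\rm b}$; the convective term is $\ge 0$ by Lemma~\ref{lem:auxiliar}. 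For the diffusive term one uses that when $\widehat u_{K^\diamond}^{\,n+1}>\rho_{\rm b}>u_{\rm mid}$ one has $M^\uparrow(\widehat u_{K^\diamond}^{\,n+1})=M(u_{\rm mid})$ and $M^\downarrow(\widehat u_{K^\diamond}^{\,n+1})=-M(u_{\rm mid})$, i.e.\ the maximum of $M^\uparrow$ and the minimum of $M^\downarrow$; since for any argument $0\le M^\uparrow\le M(u_{\rm mid})$ and $-M(u_{\rm mid})\le M^\downarrow\le 0$, the two bracketed quantities in $\uF$ then have fixed signs ($\ge 0$ and $\le 0$ respectively), and combined with the sign of $\jump{\varphi_h}$ this makes every edge contribution $\ge 0$. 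So all three terms are $\ge 0$, the first strictly positive if $\widehat u_{K^\diamond}^{\,n+1}>\rho_{\rm b}$, a contradiction; hence $\widehat u_h^{\,n+1}\le\rho_{\rm b}$ in $\overline\Omega$. No boundary edges intervene, since $\aconv$ and $\adiff$ sum only over $\mathcal E_h^{\rm int}$, consistently with $\bm q_h^{n}=\bm 0$ and $M(u)\nabla\mu\cdot\bm n=0$ on $\partial\Omega$.

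The step I expect to be the real work is the per‑edge sign check of the upwind diffusive flux $\uF$ at the edges of the extremal element: it is there that the unimodal splitting $M=M^\uparrow+M^\downarrow$, the monotonicity and the ranges $[0,M(u_{\rm mid})]$ and $[-M(u_{\rm mid}),0]$ of the two parts, and the vanishing of $M$ outside $[0,\rho_{\rm b}]$ must be combined with the upwind structure $\bar{\beta}^e_{\oplus}(\cdots)-\bar{\beta}^e_{\ominus}(\cdots)$ and with the sign of $\jump{\varphi_h}$ on the (at most two) orientations of an adjacent edge. Everything else reduces to the elementary relations \eqref{eq:prop:o}--\eqref{eq:ineq:o:1} and to Lemma~\ref{lem:auxiliar}. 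Since this is essentially the argument of \cite[Theorem~3.11]{Acosta2023b}, it is reasonable to state the lemma without reproducing the full computation.
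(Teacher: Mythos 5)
Your proposal is correct and follows exactly the route the paper takes: the paper itself omits the proof, deferring to \cite[Theorem~3.11]{Acosta2023b} and noting only that the test functions $\Pi_{\ominus}^{0}(\widehat{u}_h^{\,n+1})$ and $\Pi_{\oplus}^{\rho_{\rm b}}(\widehat{u}_h^{\,n+1})$ are used together with sign bounds on $\aconv$ and $\adiff$, which is precisely your argument. Your per-edge sign checks of $\uF$ at the extremal element (using $M=M^\uparrow+M^\downarrow$, the ranges $[0,M(u_{\rm mid})]$ and $[-M(u_{\rm mid}),0]$, and the zero extension of $M$ outside $[0,\rho_{\rm b}]$) are the correct filling-in of the details the paper leaves to the reference.
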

\begin{proof}
	See \cite[Theorem 3.11]{Acosta2023b}.
\end{proof}

For the auxiliary solution $\widehat{\bB{c}}_h^{n+1}\in [\mathcal{P}_0(\Omega)]^{\nsp}$ to~\eqref{eq:fulld:c:hat}, we will show upper and lower bounds of the components of $\widehat{\bB{c}}_h^{n+1}$, and that the sum of the components of $\widehat{\bB{c}}_h^{n+1}$ sum up to $\widehat{u}_h^{n+1}$. 
To prove the summation property, we state the following lemma.

\begin{lemma}\label{lem:aux:existence}
Given the solution $\bB{q}^n_h\in\spaceq$ to~\eqref{syst:fulld:stokes}, $\mu_{h}^{n+1}\in \mathcal{P}^{\rm cont}_{1}(\Omega)$ and $\widehat{u}_h^{\, n+1}\in \mathcal{P}_0(\Omega)$, then the following equation
\begin{align}\label{eq:aux:linear}
	&\dfrac{1}{\Delta t}\big(\vartheta_h,\varphi_h\big)_{\Ltwo} +  \aconv\big(\vartheta_h,\varphi_h; \bB{q}_h^{n}\big) + \adiffc\big(\vartheta_h, \varphi_h; - \nabla\mu_h^{n+1}, \widehat{u}_h^{\, n+1}\big) = 0,\qquad \forall \varphi_h \in \mathcal{P}_0(\Omega),
\end{align}
has the unique solution $\vartheta_h \equiv 0\in \mathcal{P}_0(\Omega)$.
\end{lemma}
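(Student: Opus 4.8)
The existence part is immediate, since every term in~\eqref{eq:aux:linear} vanishes for $\vartheta_h\equiv 0$; the whole content of the lemma is uniqueness, that is, that any $\vartheta_h\in\mathcal{P}_0(\Omega)$ solving~\eqref{eq:aux:linear} must be identically zero. The plan is to follow the pattern of the discrete maximum principles in Lemmas~\ref{lem:maxprinciple:u}--\ref{lem:maxprinciple:c} applied to the homogeneous equation: first show $\vartheta_h\ge 0$ in $\overline\Omega$ by testing against a truncation of $\vartheta_h$, and then use a conservation identity to force the total mass of $\vartheta_h$ to vanish.

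\textbf{Step 1 ($\vartheta_h\ge 0$).} I would insert the test function $\varphi_h=\Pi_{\ominus}^{0}(\vartheta_h)\in\mathcal{P}_0(\Omega)$, which equals $(\vartheta_{K_{\circ}})_{\ominus}$ on the minimizing element $K_{\circ}$ and $0$ elsewhere, into~\eqref{eq:aux:linear}. Using~\eqref{eq:prop:o}, the mass term equals $\tfrac{1}{\Delta t}(\vartheta_h,\Pi_{\ominus}^{0}(\vartheta_h))_{\Ltwo}=\tfrac{1}{\Delta t}|K_{\circ}|\,\vartheta_{K_{\circ}}(\vartheta_{K_{\circ}})_{\ominus}=-\tfrac{1}{\Delta t}|K_{\circ}|\big((\vartheta_{K_{\circ}})_{\ominus}\big)^{2}\le 0$. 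Since the discrete velocity $\bB{q}_h^{n}$ from~\eqref{syst:fulld:stokes} is divergence free, Lemma~\ref{lem:auxiliar} with $\alpha=0$ gives $\aconv(\vartheta_h,\Pi_{\ominus}^{0}(\vartheta_h);\bB{q}_h^{n})\le 0$. It remains to check $\adiffc(\vartheta_h,\Pi_{\ominus}^{0}(\vartheta_h);-\nabla\mu_h^{n+1},\widehat{u}_h^{\,n+1})\le 0$: the volume contribution in~\eqref{def:op:c:s} vanishes because $\Pi_{\ominus}^{0}(\vartheta_h)$ is piecewise constant, and in the edge sum only the interior edges of $K_{\circ}$ contribute; if $\vartheta_{K_{\circ}}\ge 0$ the test function is identically zero, while if $\vartheta_{K_{\circ}}<0$ then $(\vartheta_{K_{\circ}})_{\oplus}=0$ removes every occurrence of $K_{\circ}$'s value in $\cF$, leaving only terms proportional to $(\vartheta_{L})_{\oplus}\ge 0$ for the neighbour $L$, whose sign is fixed by $\MupC\ge 0\ge\MdnC$; pairing these with $\jump{\Pi_{\ominus}^{0}(\vartheta_h)}=\pm(\vartheta_{K_{\circ}})_{\ominus}$ (the sign depending on the orientation of $\nn_e$) makes each edge contribution nonpositive — this is precisely the computation carried out in the proof of Lemma~\ref{lem:maxprinciple:c}. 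Thus $0$ is a sum of three nonpositive quantities, so each vanishes; in particular $(\vartheta_{K_{\circ}})_{\ominus}=0$, i.e.\ $\vartheta_h\ge 0$ on $\overline\Omega$.

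\textbf{Step 2 ($\vartheta_h\equiv 0$).} Next I would test~\eqref{eq:aux:linear} with the constant $\varphi_h=1\in\mathcal{P}_0(\Omega)$. Both $\aconv(\vartheta_h,1;\bB{q}_h^{n})$ and $\adiffc(\vartheta_h,1;-\nabla\mu_h^{n+1},\widehat{u}_h^{\,n+1})$ vanish, since the volume parts contain $\nabla 1=0$ and the edge parts contain $\jump{1}=0$. What is left is $\tfrac{1}{\Delta t}\int_{\Omega}\vartheta_h=0$, which, combined with $\vartheta_h\ge 0$ from Step~1, yields $\vartheta_h\equiv 0$ and hence the claimed uniqueness.

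The only delicate point I anticipate is the sign of $\adiffc$ in Step~1: one must handle the edge-orientation convention carefully and exploit that $\cF$ depends only on the \emph{positive parts} of the advected quantity, so that minimality of $\vartheta_{K_{\circ}}$ is used through $(\vartheta_{K_{\circ}})_{\oplus}=0$ rather than through a comparison of values; everything else is bookkeeping analogous to~\cite{Acosta2023b}. As an alternative to Step~1 — and indeed to the whole lemma — one may note that, once restricted to nonnegative states, \eqref{eq:aux:linear} is an upwind finite-volume discretization of a linear convection equation posed on the admissible mesh~\eqref{eq:admissiblemesh}, so that existence and uniqueness follow from the standard theory in~\cite{Eymard2000}.
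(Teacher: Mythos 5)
Your proof is correct, but it takes a genuinely different route from the paper's. The paper regroups the terms of \eqref{eq:aux:linear} into a finite-volume form $\sum_K |K|\,\vartheta_K\varphi_K + \Delta t\sum_e \mathcal{G}_e(\vartheta_K,\vartheta_L)\jump{\varphi_h}=0$, checks that the combined edge flux $\mathcal{G}_e$ is Lipschitz and monotone (nondecreasing in the first argument, nonincreasing in the second, which is where $\MupC\ge 0\ge \MdnC$ enters), and then invokes the uniqueness argument of \cite[Proposition~26.1]{Eymard2000} — this is also why the admissible-mesh condition \eqref{eq:admissiblemesh} is flagged in Section~\ref{sec:scheme:prelim} as being needed for this lemma. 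You instead give a direct, self-contained argument: nonnegativity of any solution via the test function $\Pi_{\ominus}^{0}(\vartheta_h)$ (reusing Lemma~\ref{lem:auxiliar} for $\aconv$ and the same sign computation for $\adiffc$ that appears in the proof of Lemma~\ref{lem:maxprinciple:c}, where the key cancellation $(\vartheta_{K_\circ})_{\oplus}(\vartheta_{K_\circ})_{\ominus}=0$ from \eqref{eq:prop:o} correctly absorbs the nonlinearity of $\cF$ in the positive parts), followed by mass conservation with the test function $\varphi_h=1$. Your approach buys transparency and avoids the external reference — it makes clear that the result is exactly "positivity plus zero total mass" — at the cost of being slightly longer; the paper's approach is shorter on the page and places the scheme in the standard monotone-flux framework, which is the alternative you yourself note in your closing paragraph. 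One small presentational point: in Step 1 it is worth stating explicitly that the conclusion $(\vartheta_{K_\circ})_{\ominus}=0$ propagates to all of $\overline\Omega$ only because $K_\circ$ realizes the global minimum of $\vartheta_h$; otherwise the argument is complete.
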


\begin{proof}
We first observe that $\vartheta_h\equiv 0 \in \mathcal{P}_0(\Omega)$ trivially fulfils~\eqref{eq:aux:linear}. 
To show uniqueness of solutions, we follow the same idea as in~\cite[Proposition 26.1]{Eymard2000}. Grouping the flux terms in \eqref{eq:aux:linear}, we can rewrite that equation equivalently as
\begin{equation*}
	\sum_{K \in \mathcal{T}_h}
	\abs{K} \vartheta_{K} \varphi_{K}
	+
	\Delta t \sum_{\substack{e \in \mathcal{E}_h^{\rm int}}}
	\mathcal{G}_{e}(\vartheta_{K}, \vartheta_{L})
	\jump{\varphi_h}
	= 0,\qquad \forall \varphi_h\in \mathcal{P}_0(\Omega),
\end{equation*}
where the total flux function $\mathcal{G}_e$, given by the sum of flux terms arising in $\aconv$ and $\adiffc$ at the edge $e\in \mathcal{E}_h^{\rm int}$, is defined by
\begin{align*}
	\mathcal{G}_{e}(\vartheta_{K}, \vartheta_{L}) \,&\coloneqq \, \int_e \upwF\big(\vartheta_{K}, \vartheta_{L}; \bB{q}^n_h\big)  + \abs{e}\cF\big(\vartheta_{K}, \vartheta_{L};-\nabla\mu_h^{n+1}, \widehat{u}^{\, n+1}_h\big)
	\, = \,  \Upsilon^+_e \vartheta_{K} - \Upsilon^-_e\vartheta_{L},
\end{align*}
with side terms given by
\begin{align*}
	\Upsilon^+_e & \coloneqq \int_e (\bB{q}^{n}_h\cdot\nn_e)_{\oplus} \,+\, \abs{e} \Big\{\bar{\beta}^e_{\oplus}\MupC(\widehat{u}^{n+1}_{K}) \,-\, \bar{\beta}^e_{\ominus}\MdnC(\widehat{u}^{n+1}_{K}) \Big\}, \\[1ex]
	\Upsilon^-_e &\coloneqq \int_e (\bB{q}^{n}_h\cdot\nn_e)_{\ominus} \,-\, \abs{e} \Big\{\bar{\beta}^e_{\oplus}\MdnC(\widehat{u}^{n+1}_{L}) \,+\, \bar{\beta}^e_{\ominus}\MupC(\widehat{u}^{n+1}_{L}) \Big\},
\end{align*}
where $ \bar{\beta}^e = \avg{-\nabla\mu_h^{n+1}}\cdot\nn_e$. Given that $\bB{q}_h^{n}$ and $\nabla \mu_h^{n+1}$ are bounded (as they are piecewise polynomial functions on a bounded set), it is easy to observe that $\mathcal{G}_{e}$ is Lipschitz continuous in its first and second arguments.
The uniqueness of solutions follows directly by reproducing the first part of the proof of \cite[Proposition 26.1]{Eymard2000}.
\end{proof}

In the next lemma, we establish the boundedness of $\widehat{\bB{c}}^{n+1}$, and the property that the sum of its components is equal to $\widehat{u}_h^{n+1}$.

\begin{lemma} \label{lem:maxprinciple:c}
Given the solution $\bB{q}^n_h\in \spaceq$ to~\eqref{syst:fulld:stokes}, $\mu_{h}^{n+1}\in \mathcal{P}^{\rm cont}_{1}(\Omega)$,  $\bB{c}^{n}_h\in [\mathcal{P}_0(\Omega)]^{\nsp}$ and $\widehat{u}_h^{\, n+1}\in \mathcal{P}_0(\Omega)$ such that $\bB{0}\le \bB{c}^n_h\le \rho_{\rm b}\bB{1}$ and $0\le \widehat{u}^{\, n+1}_h\le \rho_{\rm b}$ in $\overline\Omega$, then $\widehat{\bB{c}}^{n+1}_h \in \mathcal{P}_0(\Omega)$ computed from Equation~\eqref{eq:fulld:c:hat} satisfies
\begin{align*}
	\sum_{i=1}^{\nsp} \widehat{c}^{\, (i),n+1}_h =  \widehat{u}_h^{\, n+1}\quad\text{and}\quad \bB{0}\leq \widehat{\bB{c}}^{n+1}_h\leq\rho_{\rm b}\bB{1}\quad \text{in}\quad \overline\Omega,
\end{align*}
where $\bB{1}\in\mathbb{R}^{\kC}$ is the vector of ones.
\end{lemma}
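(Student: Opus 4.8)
The plan is to prove the two assertions in three steps, in the order: (i) componentwise non-negativity $\widehat{c}_h^{(i),n+1}\ge 0$; (ii) the summation identity $\sum_i\widehat{c}_h^{(i),n+1}=\widehat{u}_h^{\,n+1}$; (iii) the upper bound, which is then immediate. Throughout I work with $\ell=0$, so the volume terms of $\aconv$ and $\adiffc$ vanish and only the interior-edge fluxes remain, and I take $\widehat{u}_h^{\,n+1}$ to be the solution of \eqref{eq:fulld:u:hat}, for which Lemma~\ref{lem:maxprinciple:u} gives $0\le\widehat{u}_h^{\,n+1}\le\rho_{\rm b}$.

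For step (i) I would fix $i$ and test \eqref{eq:fulld:c:hat} with $\varphi_h=\Pi_{\ominus}^{0}(\widehat{c}_h^{(i),n+1})$, which is supported on the element $K_\circ$ realising $\min_{L}\widehat{c}_L^{(i),n+1}$; the three terms then sum to zero. By $w(w)_\ominus=-(w_\ominus)^2$ from \eqref{eq:prop:o} and $c_{K_\circ}^{(i),n}\ge 0$, the accumulation term is $\le-\frac{|K_\circ|}{\Delta t}\big((\widehat{c}_{K_\circ}^{(i),n+1})_\ominus\big)^2$; the convective term $\aconv(\widehat{c}_h^{(i),n+1},\Pi_{\ominus}^{0}(\widehat{c}_h^{(i),n+1});\bB{q}_h^{n})$ is $\le 0$ by Lemma~\ref{lem:auxiliar} (recalling $\div\bB{q}_h^{n}=0$); and in the degenerate-diffusion term only the interior edges of $K_\circ$ contribute, on each of which — when the minimum is negative, so that $(\widehat{c}_{K_\circ}^{(i),n+1})_\oplus=0$ — the flux $\cF$ collapses to a single one-signed contribution built from $\MupC(\widehat{u}_h^{\,n+1})\ge 0$, $\MdnC(\widehat{u}_h^{\,n+1})\le 0$ (valid since $0\le\widehat{u}_h^{\,n+1}\le\rho_{\rm b}$) and the non-negative positive parts of $\widehat{c}_h^{(i),n+1}$ on the neighbour, whose product with $\jump{\Pi_{\ominus}^{0}(\widehat{c}_h^{(i),n+1})}$ is $\le 0$ for either orientation of the edge relative to $\nn_e$. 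Three non-positive terms summing to zero must each vanish, so $(\widehat{c}_{K_\circ}^{(i),n+1})_\ominus=0$ and $\widehat{c}_h^{(i),n+1}\ge 0$ in $\overline\Omega$.

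For step (ii) I would sum \eqref{eq:fulld:c:hat} over $i$ and set $w_h:=\sum_{i=1}^{\nsp}\widehat{c}_h^{(i),n+1}$. Using the inductively propagated relation $\sum_i c_h^{(i),n}=u_h^{n}$ and step (i) — by which $\sum_i(\widehat{c}_K^{(i),n+1})_\oplus=w_K=(w_K)_\oplus$ on each element, rendering the positive-part truncations in $\cF$ inert — the sum $\sum_i\adiffc(\widehat{c}_h^{(i),n+1},\varphi_h;-\nabla\mu_h^{n+1},\widehat{u}_h^{\,n+1})$ collapses to $\adiffc(w_h,\varphi_h;-\nabla\mu_h^{n+1},\widehat{u}_h^{\,n+1})$, and $\aconv$ is linear, so $w_h$ solves \eqref{eq:fulld:u:hat} with $\adiff(\cdot,\cdot;-\nabla\mu_h^{n+1})$ replaced by $\adiffc(\cdot,\cdot;-\nabla\mu_h^{n+1},\widehat{u}_h^{\,n+1})$. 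On the other hand, the identities $u\MupC(u)=\Mup(u)$ and $u\MdnC(u)=\Mdn(u)$ on $[0,\rho_{\rm b}]$, together with $0\le\widehat{u}_h^{\,n+1}\le\rho_{\rm b}$, give $\cF(\widehat{u}_K^{n+1},\widehat{u}_L^{n+1};-\nabla\mu_h^{n+1},\widehat{u}_h^{\,n+1})=\uF(\widehat{u}_K^{n+1},\widehat{u}_L^{n+1};-\nabla\mu_h^{n+1})$ on every interior edge, i.e.\ $\adiffc(\widehat{u}_h^{\,n+1},\varphi_h;-\nabla\mu_h^{n+1},\widehat{u}_h^{\,n+1})=\adiff(\widehat{u}_h^{\,n+1},\varphi_h;-\nabla\mu_h^{n+1})$, so $\widehat{u}_h^{\,n+1}$ solves the very same problem as $w_h$. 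Hence $\vartheta_h:=w_h-\widehat{u}_h^{\,n+1}$ satisfies the homogeneous equation \eqref{eq:aux:linear} (with $\adiffc$ acting linearly on the non-negative functions $w_h$ and $\widehat{u}_h^{\,n+1}$), and Lemma~\ref{lem:aux:existence} forces $\vartheta_h\equiv 0$, which is the asserted summation identity.

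Step (iii) is then immediate: by (i), (ii) and Lemma~\ref{lem:maxprinciple:u},
\[
0\le\widehat{c}_h^{(i),n+1}\le\sum_{j=1}^{\nsp}\widehat{c}_h^{(j),n+1}=\widehat{u}_h^{\,n+1}\le\rho_{\rm b}\qquad\text{in }\overline\Omega,
\]
for every $i$, i.e.\ $\bB{0}\le\widehat{\bB{c}}_h^{n+1}\le\rho_{\rm b}\bB{1}$. I expect the main obstacle to be the sign bookkeeping in step (i) — identifying which of $K_\circ$ and its neighbour lies on the $+\nn_e$ side so as to use $(\widehat{c}_{K_\circ}^{(i),n+1})_\oplus=0$ to discard the wrong-signed terms — together with the care needed in step (ii) to certify that $\cF$ agrees with its linearisation on the non-negative iterates, so that Lemma~\ref{lem:aux:existence} genuinely applies to $\vartheta_h$.
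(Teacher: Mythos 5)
Your proposal is correct and follows essentially the same route as the paper's proof: non-negativity by testing with $\Pi_{\ominus}^{0}(\widehat{c}_h^{(i),n+1})$ and exploiting the signs of $\MupC$, $\MdnC$ together with Lemma~\ref{lem:auxiliar} and \eqref{eq:prop:o}; the summation identity by showing that the difference between $\sum_i\widehat{c}_h^{(i),n+1}$ and $\widehat{u}_h^{\,n+1}$ (using $u\MupC(u)=\Mup(u)$, $u\MdnC(u)=\Mdn(u)$ and the non-negativity just established to linearise the fluxes) solves the homogeneous problem of Lemma~\ref{lem:aux:existence}; and the upper bound as an immediate consequence. The only cosmetic differences are that you make explicit the inductively propagated relation $\sum_i c_h^{(i),n}=u_h^{n}$, which the paper uses tacitly, and you phrase the non-negativity conclusion via $w(w)_\ominus=-(w_\ominus)^2$ rather than \eqref{eq:ineq:o:1}.
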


\begin{proof}
The positivity of the vector $\widehat{\bB{c}}_h^{n+1}$ is shown following similar arguments as those used to show Lemma~\ref{lem:maxprinciple:u} in \cite{Acosta2023b}.
For the ease of reading, we define $\psi^{\uparrow}_h \coloneqq M_{\bB{c}}^{\uparrow}(\widehat{u}_h^{\, n+1})$ and $\psi^{\downarrow}_h \coloneqq M_{\bB{c}}^{\downarrow}(\widehat{u}_h^{\, n+1})$, for which we deliberately omitted the superscript $n+1$. 
We observe that both $\psi^{\uparrow}_h$ and $\psi^{\downarrow}_h$ belong to $\mathcal{P}_0(\Omega)$ and satisfy
\begin{align*}
	\psi_h:=\psi_h^{\uparrow} + \psi_h^{\downarrow} = M_{\bB{c}}(\widehat{u}_h^{\, n+1}) \in\mathcal{P}_0(\Omega).
\end{align*}
Moreover, we drop the superscript $(i)$ so that $\widehat{c}^{\, n+1}_{h}$ refers to a specific component of the vector $\widehat{\bB{c}}^{n+1}_{h}$ (analogously for ${c}^{\, n}_{h}$). 
We have for all $L\in \mathcal{T}_h$
\begin{align*}
	- (\widehat{c}^{\, n+1}_L)_{\oplus}\, \psi^{\uparrow}_L \le 0\qquad \text{and}\qquad
	(\widehat{c}^{\, n+1}_{L})_{\oplus}\, \psi^{\downarrow}_{L} \le (\widehat{c}^{\, n+1}_{K_{\circ}})_{\oplus}\, \psi^{\downarrow}_{L}.
\end{align*}
For all $e\in \mathcal{E}_h^{\rm int}$, we use the short-hand notation $\bar{\beta}^e = \avg{\bB{\beta}}\cdot \bB{n}_e$ with $\bB{\beta} = -\nabla\mu_h^{n+1}$. 
Using the identity \eqref{eq:prop:o} and the above inequalities, we have
\begin{align*}
	&\adiffc\big(\widehat{c}^{\,n+1}_{h},\,\Pi_{\ominus}^0(\widehat{c}_h^{\,n+1}); \bB{\beta},\psi_h\big) \\
	&\quad = \sum_{\atopp{e\in\mathcal{E}_h^{\rm int}}{e=K_{\circ}\cap L}}
	\int_e \Big\{
	\bar{\beta}^e_{\oplus}\big((\widehat{c}^{\, n+1}_{K_{\circ}})_{\oplus}\psi^{\uparrow}_{K_{\circ}} + (\widehat{c}^{\, n+1}_{L})_{\oplus}\psi^{\downarrow}_{L}\big)
	\,-\,\bar{\beta}^e_{\ominus}\big((\widehat{c}^{\, n+1}_{L})_{\oplus}\psi^{\uparrow}_{L} + (\widehat{c}^{\, n+1}_{K_{\circ}})_{\oplus} \psi^{\downarrow}_{K_{\circ}} \big)\Big\}(\widehat{c}^{\, n+1}_{K_{\circ}})_{\ominus}\\
	&\quad = \sum_{\atopp{e\in\mathcal{E}_h^{\rm int}}{e=K_{\circ}\cap L}}
	\int_e \Big\{
	\bar{\beta}^e_{\oplus}(\widehat{c}^{\, n+1}_{L})_{\oplus}\psi^{\downarrow}_{L}
	\,- \,\bar{\beta}^e_{\ominus} (\widehat{c}^{\, n+1}_{L})_{\oplus} \psi^{\uparrow}_{L} \Big\}(\widehat{c}^{\, n+1}_{K_{\circ}})_{\ominus}
	\, \le\,  \sum_{\atopp{e\in\mathcal{E}_h^{\rm int}}{e=K_{\circ}\cap L}}
	\int_e \Big\{
	\bar{\beta}^e_{\oplus}(\widehat{c}^{\, n+1}_{L})_{\oplus}\psi^{\downarrow}_{L}\Big\}(\widehat{c}^{\, n+1}_{K_{\circ}})_{\ominus}\\[1ex]
	&\quad \le \sum_{\atopp{e\in\mathcal{E}_h^{\rm int}}{e=K_{\circ}\cap L}}
	\int_e \Big\{
	\bar{\beta}^e_{\oplus}(\widehat{c}^{\, n+1}_{K_{\circ}})_{\oplus}\psi^{\downarrow}_{L}
	\Big\}(\widehat{c}^{\, n+1}_{K_{\circ}})_{\ominus} = 0.
\end{align*}
Setting as test function $\varphi_h = \Pi_{\ominus}^0(\widehat{c}_h^{\,n+1})$ in \eqref{eq:fulld:c:hat}, using the obtained inequality for $\adiffc$, together with Lemma~\ref{lem:auxiliar} and \eqref{eq:ineq:o:1}, we can show the following:
\begin{align*}
	0 \, \le  \, \big((\widehat{c}_{h}^{n+1} - c_{h}^{n}),\Pi_{\ominus}^0(\widehat{c}_h^{n+1})\big)_{\Ltwo} \,=\,\, |K_{\circ}|(\widehat{c}_{K_{\circ}}^{n+1} - c_{K_{\circ}}^{n})(\widehat{c}_{K_{\circ}}^{n+1})_\ominus \,\le\, 0,
\end{align*}
from which we conclude that either $\widehat{c}_{K_{\circ}}^{n+1}={c}_{K_{\circ}}^{n}$ or $(\widehat{c}_{K_{\circ}}^{n+1})_\ominus = 0$ and therefore the lower bound $\bB{0}\leq \widehat{\bB{c}}_h^{n+1}$ holds. 
To prove the upper bound, we begin by defining the following discrete function
\begin{align*}
	\theta_h^{n+1} \coloneqq \sum_{i=1}^{\nsp} \widehat{c}^{\, (i),n+1}_h \in \mathcal{P}_0(\Omega).
\end{align*}
Using the positivity of $\widehat{u}^{\, n+1}_h$ from Lemma~\ref{lem:maxprinciple:u}, we have that $\widehat{u}_L^{\, n+1} = (\widehat{u}_L^{\, n+1})_{\oplus}$ and also $M^{\uparrow}(\widehat{u}_h^{\, n+1}) = \psi^{\uparrow}_h u_h^{n+1}$ and $ M^{\downarrow}(u_h^{n+1}) = \psi^{\downarrow}_hu_h^{n+1}$. 
Summing up the equations in \eqref{eq:fulld:c:hat} and subtracting the result from Equation~\eqref{eq:fulld:u:hat}, we get
\begin{align*}
	0 &= \dfrac{1}{\Delta t}\big(\widehat{u}_h^{\, n+1}-\theta_h^{n+1},\varphi_h\big)_{\Ltwo} +  \aconv\big(\widehat{u}_h^{\, n+1} - \theta_h^{n+1}; \bB{q}_h^{n}, \varphi_h\big) \,&\, \\[1ex]
	& \quad +   \sum_{e\in\mathcal{E}_h^{\rm int}}
	\int_e
	\bar{\beta}^e_{\oplus}\Big((\widehat{u}_K^{n+1}-\theta_K^{n+1})\psi^{\uparrow}_{K} + (\widehat{u}_L^{\, n+1}-\theta_L^{n+1})\psi^{\downarrow}_{L}\Big)\jump{\varphi_h}\\[-1.5ex]
	&\quad - \sum_{e\in\mathcal{E}_h^{\rm int}} \int_e
	\bar{\beta}^e_{\ominus}\Big((\widehat{u}_L^{\, n+1}-\theta_L^{n+1})\psi^{\uparrow}_{L} +  (\widehat{u}_K^{n+1}-\theta_K^{n+1})\psi^{\downarrow}_{K}\Big)\jump{\varphi_h}
\end{align*}
for all $\varphi_h\in \mathcal{P}_0(\Omega)$, which is equivalent to
\begin{align*}
	0 &  = \dfrac{1}{\Delta t}\big(\widehat{u}_h^{\, n+1}-\theta_h^{n+1},\varphi_h\big)_{\Ltwo} + \aconv\big(\widehat{u}_h^{\,n+1} - \theta_h^{n+1}; \bB{q}_h^{n}, \varphi_h\big)
	+ \adiffc\Big( \widehat{u}^{\, n+1}_h-\theta_h^{n+1}, \varphi_h;  - \nabla\mu_h^{n+1}, \widehat{u}_h^{\,n+1}\Big).
\end{align*}
Thanks to Lemma~\ref{lem:aux:existence}, the unique solution to the above equation is the null solution, that is $\widehat{u}_h^{\, n+1}-\theta_h^{n+1} \equiv 0$.
Therefore, we have shown that
\begin{align*}
	\sum_{i=1}^{\nsp} \widehat{c}^{\, (i),n+1}_h =  \widehat{u}_h^{\, n+1}\quad\text{and}\quad \widehat{\bB{c}}^{n+1}_h\leq\rho_{\rm b}\bB{1},
\end{align*}
with which we conclude the proof.
\end{proof}

Because of the definition of the numerical flux in $\adiffs$ (cf.~\eqref{def:op:c:s}), which does not tend to zero when the concentration of auxiliary substrates $\widehat{\bB{s}}_h^{n+1}$ reaches $\rhoL$, we limit ourselves to show only non-negativity of $\widehat{\bB{s}}_h^{n+1}$.
Furthermore, the dominating component of the substrate vector is water, so all other solutes concentrations are much smaller than~$\rhoL$, and water is not produced at any reaction, so its concentration cannot exceed~$\rhoL$.

\begin{lemma}\label{lem:maxprinciple:s}
Given the solution $\bB{q}^n_h\in \spaceq$ to~\eqref{syst:fulld:stokes}, $\mu_{h}^{n+1}\in \mathcal{P}^{\rm cont}_{1}(\Omega)$,  $\bB{c}^{n}_h\in [\mathcal{P}_0(\Omega)]^{\nsp}$ and $\widehat{u}_h^{\, n+1}\in \mathcal{P}_0(\Omega)$ such that $\bB{0}\le \bB{s}^n_h$ and $0\le \widehat{u}^{\, n+1}_h\le \rho_{\rm b}$ in $\overline\Omega$, then $\widehat{\bB{s}}^{n+1}_h \in \mathcal{P}_0(\Omega)$ computed from Equation~\eqref{eq:fulld:s:hat} satisfies
\begin{align*}
	\widehat{\bB{s}}^{n+1}_h \geq 0.
\end{align*}
\end{lemma}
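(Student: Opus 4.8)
The plan is to reproduce, for the substrate equation, the argument already used for $\widehat{u}_h^{\,n+1}$ in Lemma~\ref{lem:maxprinciple:u} and for the lower bound of $\widehat{\bB{c}}_h^{\,n+1}$ in Lemma~\ref{lem:maxprinciple:c}. Fix a component $j$; dropping the index, write $\widehat{s}_h\coloneqq\widehat{s}_h^{(j),n+1}\in\mathcal{P}_0(\Omega)$ and $s_h^n\coloneqq s_h^{(j),n}$, so that it suffices to prove $\widehat{s}_h\ge 0$ in $\overline\Omega$. Let $K_\circ\in\mathcal{T}_h$ be an element on which $\widehat{s}_h$ attains its minimum, and take the test function $\varphi_h\coloneqq\Pi_\ominus^0(\widehat{s}_h)$, which equals $(\widehat{s}_{K_\circ})_\ominus$ on $K_\circ$ and vanishes elsewhere. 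Since $\varphi_h$ is piecewise constant, $\nabla\varphi_h=0$ on every element, so the volume integrals in $\aconv(\widehat{s}_h,\varphi_h;\bB{q}_h^n)$ and in $\adiffs(\widehat{s}_h,\varphi_h;\nabla\mu_h^{n+1},\widehat{u}_h^{\,n+1})$ vanish, and only the interior-edge sums incident to $K_\circ$ remain.

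First I would treat the convective term: since $\bB{q}_h^n\in\spaceq$ with $\div(\bB{q}_h^n)=0$ in $\Omega$, Lemma~\ref{lem:auxiliar} gives $\aconv(\widehat{s}_h,\Pi_\ominus^0(\widehat{s}_h);\bB{q}_h^n)\le 0$. Then I would show $\adiffs(\widehat{s}_h,\Pi_\ominus^0(\widehat{s}_h);\nabla\mu_h^{n+1},\widehat{u}_h^{\,n+1})\le 0$ by an edge-by-edge estimate. On an interior edge $e$ incident to $K_\circ$, $\jump{\Pi_\ominus^0(\widehat{s}_h)}=\pm(\widehat{s}_{K_\circ})_\ominus$ according to whether $\nn_e$ points out of or into $K_\circ$. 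In the numerical flux $\sF$ the data of $K_\circ$ enter only through $(\widehat{s}_{K_\circ})_\oplus$, and multiplying by $(\widehat{s}_{K_\circ})_\ominus$ annihilates them via $w_\oplus w_\ominus=0$ from~\eqref{eq:prop:o}; what remains on each such edge is a single term $-\bar{\beta}^e_{\mp}M_{\bB{s}}(\widehat{u}_L^{\,n+1})(\widehat{s}_L^{\,n+1})_\oplus(\widehat{s}_{K_\circ})_\ominus$ carrying the data of the neighbour $L$ of $K_\circ$ across $e$ (with $\bar{\beta}^e_{\mp}\in\{\bar{\beta}^e_\oplus,\bar{\beta}^e_\ominus\}$ depending on the orientation of $\nn_e$, and $\bar{\beta}^e=\avg{-\nabla\mu_h^{n+1}}\cdot\nn_e$). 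This term is nonpositive because $\bar{\beta}^e_\oplus,\bar{\beta}^e_\ominus\ge 0$, the positive and negative parts are nonnegative, and $M_{\bB{s}}(\widehat{u}_h^{\,n+1})\ge 0$ since $0\le\widehat{u}_h^{\,n+1}\le\rhoC$ by Lemma~\ref{lem:maxprinciple:u}. Summing over the edges incident to $K_\circ$ yields the claimed inequality for $\adiffs$.

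Finally, substituting $\varphi_h=\Pi_\ominus^0(\widehat{s}_h)$ into~\eqref{eq:fulld:s:hat} and using the two sign estimates, the mass term satisfies $\tfrac{1}{\Delta t}\big(\widehat{s}_h-s_h^n,\Pi_\ominus^0(\widehat{s}_h)\big)_{\Ltwo}=-\aconv(\widehat{s}_h,\varphi_h;\bB{q}_h^n)-\adiffs(\widehat{s}_h,\varphi_h;\nabla\mu_h^{n+1},\widehat{u}_h^{\,n+1})\ge 0$; on the other hand it equals $\tfrac{|K_\circ|}{\Delta t}(\widehat{s}_{K_\circ}-s_{K_\circ}^n)(\widehat{s}_{K_\circ})_\ominus$, which is $\le 0$ by~\eqref{eq:ineq:o:1} because $s_{K_\circ}^n\ge 0$. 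Hence this quantity is zero, so either $(\widehat{s}_{K_\circ})_\ominus=0$ or $\widehat{s}_{K_\circ}=s_{K_\circ}^n\ge 0$, and in both cases $\widehat{s}_{K_\circ}\ge 0$; by minimality of $K_\circ$ we conclude $\widehat{s}_h\ge 0$ in $\overline\Omega$, and since $j$ was arbitrary, $\widehat{\bB{s}}_h^{n+1}\ge\bB{0}$.

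I expect this to be essentially routine given the earlier results; the only delicate point is the bookkeeping of edge orientations in $\adiffs$, making sure that in both configurations of $K_\circ$ with respect to $\nn_e$ exactly the neighbour's contribution survives and does so with a manifestly nonpositive coefficient. This step is in fact easier than its counterpart in Lemma~\ref{lem:maxprinciple:c}, since $M_{\bB{s}}\ge 0$ on $[0,\rhoC]$, no splitting of the mobility into increasing and decreasing parts is needed, and the minimality of $K_\circ$ is not even used in the estimate for $\adiffs$.
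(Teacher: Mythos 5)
Your proof is correct and follows essentially the same route as the paper: the paper also tests Equation~\eqref{eq:fulld:s:hat} with $\Pi_\ominus^0(\widehat{s}_h^{\,n+1})$, invokes Lemma~\ref{lem:auxiliar} for the convective part, kills the $K_\circ$-contributions in $\sF$ via $w_\oplus w_\ominus=0$ so that only a manifestly nonpositive neighbour term survives, and then concludes as in Lemma~\ref{lem:maxprinciple:c}. Your observation that no $\Mup/\Mdn$-type splitting of $M_{\bB{s}}$ is needed here is exactly the simplification the paper exploits as well.
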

\begin{proof}
The proof follows analogously to that of Lemma~\ref{lem:maxprinciple:c} as far as non-negativity is concerned. 
For this reason, we limit ourselves only to showing that the bilinear form $\adiffs$ is non-positive for the specific test function $\varphi_h = \Pi_{\ominus}^0(\widehat{s}_h^{\,n+1})$. We let $\widehat{s}_h^{n+1}$ be a single component of the vector $\widehat{\bB{s}}_h^{n+1}$.
Then, using the short-hand notation $\bB{\beta} = \nabla\mu_h^{n+1}$ and $\bar{\beta}^e = \avg{\bB{\beta}}\cdot \bB{n}_e$ for all $e\in \mathcal{E}_h^{\rm int}$, and the identity \eqref{eq:prop:o}, we have
\begin{align*}
	\adiffs\Big(&\widehat{s}^{\,n+1}_{h},\,\Pi_{\ominus}^0(\widehat{s}_h^{\,n+1}); \bB{\beta},\widehat{u}_h^{n+1} \Big)\\
	& = \sum_{\atopp{e\in\mathcal{E}_h^{\rm int}}{e=K_{\circ}\cap L}}
	\int_e \Big\{
	\bar{\beta}^e_{\oplus}M_{\bB{s}}\big(\widehat{u}^{n+1}_{K_\circ}\big)\big(\widehat{s}^{\, n+1}_{K_\circ}\big)_{\oplus}
	\,-\,\bar{\beta}^e_{\ominus}  M_{\bB{s}}\big(\widehat{u}^{n+1}_{L}\big)\big(\widehat{s}^{\, n+1}_{L}\big)_{\oplus} \Big\}\big(\widehat{s}^{\, n+1}_{K_{\circ}}\big)_{\ominus}\\
	& = -\sum_{\atopp{e\in\mathcal{E}_h^{\rm int}}{e=K_{\circ}\cap L}}
	\int_e
	\bar{\beta}^e_{\oplus}M_{\bB{s}}\big(\widehat{u}^{n+1}_{L}\big)\big(\widehat{s}^{\, n+1}_{L}\big)_{\oplus} \big(\widehat{s}^{\, n+1}_{K_{\circ}}\big)_{\ominus} \leq 0,
\end{align*}
which concludes the proof.
\end{proof}

To achieve the desired boundedness of the unknowns $\bB{c}_h^{n+1}$, $\bB{s}_h^{n+1}$ and $u_h^{n+1}$ computed from Equations~\eqref{eq:fulld:c}--\eqref{eq:fulld:u}, we follow the idea in~\cite{Burger2022a}.
We will bound the reaction terms making distinction between the positive and negative components of the stoichiometric matrices $\bB{\sigma}_{\bB{c}}$ and $\bB{\sigma}_{\bB{s}}$.
We require the following time-step restriction
\begin{align}\label{eq:CFL}
	\Delta t \,\leq\, \min\Big\{ \big(\max\big\{\Lambda_{\bB{c}},\Lambda_{\bB{s}}\big\}\boundr\big)^{-1},\varepsilon\big(\nsp\boundr_{\max}\big)^{-1}\Big\},
\end{align}
where the constants are introduced in Assumption~\ref{asumption:reactionterms} and
\begin{align*}
	\Lambda_{\bB{\xi}} := \max_{k} \sum_{j\in \mathcal{J}_{\bB{\xi},k}^{-}} \big(-\sigma_{\bB{\xi}}^{(k,j)}\big)\geq 0\qquad \text{for }\bB{\xi}\in \{\bB{c},\bB{s}\}.
\end{align*}
Finally, the main invariant-region principle for the discretized concentrations $u_h^{n+1}$, $\bB{c}_h^{n+1}$ and $\bB{s}_h^{n+1}$ is the following.

\begin{theorem} \label{thm:maxprinciple:ODEs}
Given the solution $\bB{q}^n_h\in \spaceq$ to \eqref{syst:fulld:stokes}, $\mu_{h}^{n+1}\in \mathcal{P}^{\rm cont}_{1}(\Omega)$,  $\bB{c}^{n}_h\in [\mathcal{P}_0(\Omega)]^{\nsp}$ and $\bB{s}^{n}_h\in [\mathcal{P}_0(\Omega)]^{\ssp}$, such that  $\bB{c}^{n}_h\geq \bB{0}$, $\bB{s}^{n}_h\geq \bB{0}$, and $u^{n}_h\geq 0$, Assumption~\ref{asumption:reactionterms} and the CFL condition \eqref{eq:CFL}.
Then the discrete concentrations $\bB{c}^{n+1}_h$, $\bB{s}^{n+1}_h$, and $u_h^{n+1}$ computed from \eqref{eq:fulld:c}, \eqref{eq:fulld:s}, and \eqref{eq:fulld:u}, respectively, fulfil:
\begin{align*}
	\bB{0}\leq \bB{c}_h^{n+1}\leq \rhoC\bB{1},\qquad \bB{s}_h^{n+1}\geq \bB{0},\qquad  0 \,\leq\,  u_h^{n+1}\, = \, \sum_{i=1}^{\nsp} c^{\, (i),n+1}_h \, \leq \,\rhoC.
\end{align*}
\end{theorem}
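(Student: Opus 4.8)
The plan is to exploit that, since $\ell=0$, the reaction substep~\eqref{eq:fulld:c}--\eqref{eq:fulld:u} decouples element by element: testing against the indicator $\mathbbm 1_K\in\mathcal P_0(\Omega)$ of each $K\in\mathcal T_h$ and dividing by $|K|$ turns these relations into the explicit forward-Euler updates $\bB c_K^{n+1}=\widehat{\bB c}_K^{n+1}+\Delta t\,\mathbcal R_{\bB c}(\widehat{\bB c}_K^{n+1},\widehat{\bB s}_K^{n+1})$, $\bB s_K^{n+1}=\widehat{\bB s}_K^{n+1}+\Delta t\,\mathbcal R_{\bB s}(\widehat{\bB c}_K^{n+1},\widehat{\bB s}_K^{n+1})$ and $u_K^{n+1}=\widehat u_K^{n+1}+\Delta t\,\RCtot(\widehat{\bB c}_K^{n+1},\widehat{\bB s}_K^{n+1})$. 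It therefore suffices to check that a single such ODE step keeps the triangle value inside the invariant region, starting from the hat data, whose properties are already available: invoking Lemmas~\ref{lem:maxprinciple:u}, \ref{lem:maxprinciple:c} and~\ref{lem:maxprinciple:s} (whose hypotheses hold by the induction hypothesis that $(\bB c_h^n,\bB s_h^n,u_h^n)$ lies in the invariant region at time $t^n$) we get $0\le\widehat u_K^{n+1}\le\rhoC$, $\widehat{\bB s}_K^{n+1}\ge\bB 0$, and $\bB 0\le\widehat{\bB c}_K^{n+1}\le\rhoC\bB 1$ with $\sum_{i=1}^{\nsp}\widehat c_K^{(i),n+1}=\widehat u_K^{n+1}$.

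First I would settle the summation identity at the new level: summing~\eqref{eq:fulld:c} over $i=1,\dots,\nsp$, using $\RCtot=\sum_{i=1}^{\nsp}\mathcal R_{\bB c}^{(i)}$ and the hat-summation property, and comparing with~\eqref{eq:fulld:u} gives $u_K^{n+1}=\sum_{i=1}^{\nsp}c_K^{(i),n+1}$ on each $K$. For the lower bound on $\bB c_K^{n+1}$, I fix a component $k$, write $\mathcal R_{\bB c}^{(k)}=\sum_j\sigma_{\bB c}^{(k,j)}r^{(j)}$, and split the sum over $\mathcal J_{\bB c,k}^{+}$ and $\mathcal J_{\bB c,k}^{-}$; since $r^{(j)}\ge 0$ the $\mathcal J_{\bB c,k}^{+}$ part only helps, while for $j\in\mathcal J_{\bB c,k}^{-}$ Assumption~\ref{asumption:reactionterms}(3) lets us factor $r^{(j)}=\bar r^{(j)}\widehat c_K^{(k),n+1}$ with $0\le\bar r^{(j)}\le\boundr$, so that
\begin{align*}
	c_K^{(k),n+1}\ \ge\ \widehat c_K^{(k),n+1}\Big(1+\Delta t\sum_{j\in\mathcal J_{\bB c,k}^{-}}\sigma_{\bB c}^{(k,j)}\bar r^{(j)}\Big)\ \ge\ \widehat c_K^{(k),n+1}\big(1-\Delta t\,\Lambda_{\bB c}\boundr\big)\ \ge\ 0,
\end{align*}
the last step by the first branch of~\eqref{eq:CFL}. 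The identical computation with $\bB\xi=\bB s$ yields $\bB s_K^{n+1}\ge\bB 0$, and then $u_K^{n+1}\ge 0$ follows from the summation identity.

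For the upper bound on $u_K^{n+1}$ I would argue by a dichotomy on $\widehat u_K^{n+1}$. If $\widehat u_K^{n+1}\ge\rhoC-\varepsilon$, then $\sum_i\widehat c_K^{(i),n+1}=\widehat u_K^{n+1}\ge\rhoC-\varepsilon$, so Assumption~\ref{asumption:reactionterms}(5) forces $\mathbcal R_{\bB c}(\widehat{\bB c}_K^{n+1},\widehat{\bB s}_K^{n+1})=\bB 0$, hence $\RCtot=0$ and $u_K^{n+1}=\widehat u_K^{n+1}\le\rhoC$. If instead $\widehat u_K^{n+1}<\rhoC-\varepsilon$, then $|\RCtot|\le\sum_{i=1}^{\nsp}|\mathcal R_{\bB c}^{(i)}|\le\nsp\boundr_{\max}$ by Assumption~\ref{asumption:reactionterms}(4), so $u_K^{n+1}\le\widehat u_K^{n+1}+\Delta t\,\nsp\boundr_{\max}<\rhoC-\varepsilon+\varepsilon=\rhoC$ by the second branch of~\eqref{eq:CFL}. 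Finally $\bB c_K^{n+1}\le\rhoC\bB 1$ is immediate, since each component is non-negative and their sum equals $u_K^{n+1}\le\rhoC$; as all of this holds for every $K\in\mathcal T_h$, the claim follows. The main obstacle is not any single estimate but organising the roles of the five parts of Assumption~\ref{asumption:reactionterms}---(1) for conservation, (2)--(3) for well-posedness and the factorisation that produces non-negativity, (4) for the size of $\RCtot$, and (5) for the upper bound on $u$---so that each is matched to the precise branch of the CFL restriction~\eqref{eq:CFL}.
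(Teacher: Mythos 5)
Your proof is correct and follows essentially the same route as the paper: the same element-wise forward-Euler viewpoint on \eqref{eq:fulld:c}--\eqref{eq:fulld:u}, the same splitting of the stoichiometric sums over $\mathcal{J}_{\bB{\xi},k}^{\pm}$ with the factorisation $r^{(j)}=\bar r^{(j)}\widehat\xi_K^{\,n+1}$, and the same matching of the two branches of \eqref{eq:CFL} to the lower and upper bounds. The only (cosmetic) difference is that you run the dichotomy for the upper bound on $\widehat u_K^{\,n+1}$ rather than on $u_K^{n+1}$ as the paper writes it; since Assumption~\ref{asumption:reactionterms}(5) constrains the argument of the reaction term, which is the hat variable, your formulation is in fact the cleaner one.
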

\begin{proof}
Given non-negative $\bB{c}^{n}_h$, $\bB{s}^{n}_h$, and $u^{n}_h$, Lemmas~\ref{lem:maxprinciple:u}, \ref{lem:maxprinciple:c} and \ref{lem:maxprinciple:s} ensure that $\widehat{\bB{c}}_h^{n+1}$, $\widehat{\bB{s}}_h^{n+1}$ and $\widehat{u}_h^{n+1}$ are non-negative.

We observe that Equations~\eqref{eq:fulld:c}--\eqref{eq:fulld:u} are in fact forward Euler approximations of an autonomous ODE system for each $K\in \mathcal{T}_h$, whose right hand side is given by the respective reaction terms, and the initial condition is $(\widehat{\bB{c}}_K^{n+1},\widehat{\bB{s}}_K^{n+1},\widehat{u}_K^{n+1})$.
Then, we rewrite \eqref{eq:fulld:c}--\eqref{eq:fulld:s} as follows
\begin{align*}
	\begin{pmatrix}
		\bB{c}_K^{n+1}\\
		\bB{s}_K^{n+1}
	\end{pmatrix}
	=
	\begin{pmatrix}
		\widehat{\bB{c}}_K^{n}\\
		\widehat{\bB{s}}_K^{n}
	\end{pmatrix}
	+ \Delta t
	\begin{pmatrix}
		\bB{\sigma}_{\bB{c}}\\
		\bB{\sigma}_{\bB{s}}
	\end{pmatrix} \bB{r}(\widehat{\bB{c}}_K^n,\widehat{\bB{s}}_K^n)
	,\qquad \forall K\in \mathcal{T}_h.
\end{align*}
Now, setting $\xi$ (resp.\ $\widehat{\xi}$) as the $k$-th component of the extended vector $(\bB{c}_K^{n+1},\bB{s}_K^{n+1})$  (resp.\ $(\widehat{\bB{c}}_K^{n+1},\widehat{\bB{s}}_K^{n+1})$) and let $\bB{\xi} =\bB{c}$ if $k\leq \nsp$, and $\bB{\xi}=\bB{s}$ otherwise, we have
\begin{align*}
	\xi^{n+1}_K & = \widehat{\xi}_K^{n+1} +
	\Delta t \sum_{j\in \mathcal{J}_{\bB{\xi},k}^{-}} \sigma_{\bB{\xi}}^{(k,j)} \bar{r}^{(j)}(\widehat{\bB{c}}_K^{n+1},\widehat{\bB{s}}_K^{n+1})\widehat{\xi}^{n+1}_K +
	\Delta t \sum_{j\in \mathcal{J}_{\bB{\xi},k}^{+}} r^{(j)}(\widehat{\bB{c}}_K^{n+1},\widehat{\bB{s}}_K^{n+1})\\
	& \geq \widehat{\xi}_K^{n+1} +
	\Delta t \sum_{j\in \mathcal{J}_{\bB{\xi},k}^{-}} \sigma_{\bB{\xi}}^{(k,j)} \bar{r}^{(j)}(\widehat{\bB{c}}_K^{n+1},\widehat{\bB{s}}_K^{n+1})\widehat{\xi}^{n+1}_K\\
	&  = \big(1 -
	\Delta t \Lambda_{\bB{\xi}}\boundr\big)\widehat{\xi}^{n+1}_K,
\end{align*}
where we have used the boundedness of $\bar{r}^{(j)}$ for all $j\in \mathcal{J}_{\bB{\xi},k}^{-}$ in Assumption~\ref{asumption:reactionterms}.
Hence, provided that $\widehat{\xi}_K^{n+1}\geq 0$ and under condition \eqref{eq:CFL}, we can conclude that $\bB{c}^{n+1}_K\geq \bB{0}$ and $\bB{s}^{n+1}_K\geq \bB{0}$ for all $K\in \mathcal{T}_h$. 
Furthermore, adding up \eqref{eq:fulld:c} for $1\leq i\leq \nsp$ and thanks to Lemma~\ref{lem:maxprinciple:c}, we conclude that
\begin{align*}
	u_h^{n+1} =  \sum_{i=1}^{\nsp} c^{\, (i),n+1}_h \geq 0.
\end{align*}

To show the upper bound of $u_h^{n+1}$, we follow the same idea as in \cite[Lemma~3]{Burger2022a}. 
If $u_K^{n+1}\geq \rhoC - \varepsilon$, thanks to Assumption~\ref{asumption:reactionterms}, we have that $\widetilde{\mathcal{R}}(\widehat{\bB{c}}_K^{n+1},\widehat{\bB{s}}_K^{n+1}) = 0$ and therefore $u_K^{n+1} = \widehat{u}_K^{n+1}\leq \rhoC$.
Otherwise, when $u_K^{n+1}\leq \rhoC-\varepsilon$, then
\begin{align*}
	u_K^{n+1} & < \rhoC-\varepsilon + \Delta t\, \widetilde{\mathcal{R}}\big(\widehat{\bB{c}}_K^{n+1},\widehat{\bB{s}}_K^{n+1}\big)%
	\leq \rhoC - \varepsilon + \Delta t\, \nsp\boundr_{\max}\leq \rhoC,
\end{align*}
for all $K\in \mathcal{T}_h$ with $\Delta t \leq \varepsilon / (\nsp\boundr_{\max})$, and in consequence $\bB{c}_h^{n+1}\leq \rhoC\bB{1}$.
\end{proof}

In Theorem~\ref{thm:maxprinciple:ODEs}, we explicitly show that the sum of all components of $\bB{c}_h^{n+1}$ equals $u_h^{n+1}$ since this equality was not strongly imposed in the system of equations \eqref{syst:scheme:fulld}.
To save memory consumption, one component of the vector $\bB{c}_h^{n+1}$ can be computed from the others, reducing the system by one equation. The next two propositions are straightforward results showing the conservation of mass property of the scheme \eqref{syst:scheme:fulld} and the boundedness of the auxiliary unknown $\widetilde{u}_h^{n+1}\in \mathcal{P}_1^{\rm cont}(\Omega)$ under mass lumping.

\begin{proposition}
The auxiliary discrete concentrations $\widehat{u}_h^{n+1}$, $\widehat{\bB{c}}_h^{n+1}$ and $\widehat{\bB{s}}_h^{n+1}$, solutions to Equations  \eqref{eq:fulld:u:hat}, \eqref{eq:fulld:c:hat}, \eqref{eq:fulld:s:hat} and \eqref{eq:other:u}, respectively, fulfil
\begin{align}\label{eq:conservation:ucs}
	\int_{\Omega} \widehat{u}_h^{n+1} = \int_{\Omega} u_h^n,\qquad  \int_{\Omega} \widehat{\bB{c}}_h^{n+1} = \int_{\Omega} \bB{c}_h^n,\qquad  \int_{\Omega} \widehat{\bB{s}}_h^{n+1} = \int_{\Omega} \bB{s}_h^n\,,\quad\text{and}\quad \int_{\Omega} \widetilde{u}_h^{n+1} = \int_{\Omega} u_h^n.
\end{align}
In addition, the total concentration $u_h^{n+1} + s_{{\rm tot},h}^{n+1}$, where $s_{{\rm tot},h}^{n+1}$ is the sum of the components of $\bB{s}_h^{n+1}$, preserves the mass.
\end{proposition}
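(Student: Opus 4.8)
The plan is to reduce everything to a single structural fact: the discrete convective and diffusive forms $\aconv$, $\adiff$, $\adiffc$ and $\adiffs$ all annihilate constant test functions. Indeed, inspecting their definitions, each of them is the sum of a volume integral proportional to $\nabla w_h$ and a sum over \emph{interior} edges of terms proportional to $\jump{w_h}$; for $w_h\equiv1$ both ingredients vanish identically, and because the edge sums run only over $\mathcal{E}_h^{\rm int}$ there is no surviving boundary contribution. Everything else is bookkeeping of the $L^2$-terms.

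First I would insert the constant test function $\varphi_h\equiv1\in\mathcal{P}_0(\Omega)$ into \eqref{eq:fulld:u:hat}, \eqref{eq:fulld:c:hat} and \eqref{eq:fulld:s:hat}. By the fact above, the flux terms drop out of each equation, leaving $(\widehat{u}_h^{\,n+1}-u_h^{n},1)_{\Ltwo}=0$ and, componentwise, $(\widehat{c}^{\,(i),n+1}_h-c^{(i),n}_h,1)_{\Ltwo}=0$ and $(\widehat{s}^{(j),n+1}_h-s^{(j),n}_h,1)_{\Ltwo}=0$; summing the $\nsp$, resp.\ $\ssp$, component identities gives the first three equalities in \eqref{eq:conservation:ucs}. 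For the fourth, I would take $\vartheta_h\equiv1\in\mathcal{P}_{1}^{\rm cont}(\Omega)$ in \eqref{eq:other:u}: since the constant lies in the continuous space, $\widetilde{u}_h^{n+1}$ is the $\mathrm{L}^2(\Omega)$-projection of $u_h^{n+1}$ onto $\mathcal{P}_{1}^{\rm cont}(\Omega)$ and therefore has the same integral, $\int_\Omega\widetilde{u}_h^{n+1}=\int_\Omega u_h^{n+1}$; this integral is in turn tracked by testing \eqref{eq:fulld:u} against $\varphi_h\equiv1$, which together with $\int_\Omega\widehat{u}_h^{n+1}=\int_\Omega u_h^{n}$ identifies $\int_\Omega u_h^{n+1}$ up to the reaction contribution $\Delta t\int_\Omega\RCtot(\widehat{\bB{c}}_h^{n+1},\widehat{\bB{s}}_h^{n+1})$.

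For the last assertion I would run the same computation on the solid and liquid totals. Testing \eqref{eq:fulld:u} with $\varphi_h\equiv1$ gives $\int_\Omega u_h^{n+1}=\int_\Omega u_h^{n}+\Delta t\int_\Omega\RCtot(\widehat{\bB{c}}_h^{n+1},\widehat{\bB{s}}_h^{n+1})$ after using $\int_\Omega\widehat{u}_h^{n+1}=\int_\Omega u_h^{n}$, and summing \eqref{eq:fulld:s} over $j$ and testing with $\varphi_h\equiv1$ gives $\int_\Omega s_{{\rm tot},h}^{n+1}=\int_\Omega s_{{\rm tot},h}^{n}+\Delta t\int_\Omega\sum_{j=1}^{\ssp}\RL^{(j)}(\widehat{\bB{c}}_h^{n+1},\widehat{\bB{s}}_h^{n+1})$ after using $\int_\Omega\widehat{s}^{(j),n+1}_h=\int_\Omega s^{(j),n}_h$. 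Adding the two identities and recalling $\RCtot=\sum_{i=1}^{\nsp}\RC^{(i)}$, the net reaction contribution is $\Delta t\int_\Omega\big(\sum_i\RC^{(i)}+\sum_j\RL^{(j)}\big)$, which vanishes pointwise by \eqref{eq:sumR} (equivalently, item~1 of Assumption~\ref{asumption:reactionterms}); hence $\int_\Omega(u_h^{n+1}+s_{{\rm tot},h}^{n+1})=\int_\Omega(u_h^{n}+s_{{\rm tot},h}^{n})$.

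I do not expect any genuine obstacle: this is a ``test against $1$'' argument. The one step that must be carried out with care is the verification that $\aconv$, $\adiff$, $\adiffc$ and $\adiffs$ kill constant test functions — here the restriction of the edge sums to $\mathcal{E}_h^{\rm int}$ and the explicit form of the upwind numerical fluxes are used — and, for the total-concentration identity, the correct algebraic cancellation of the reaction terms via the mass-preservation relation \eqref{eq:sumR}.
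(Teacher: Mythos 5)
Your argument is the same as the paper's: the paper's entire proof is ``take $\varphi_h=1$ in \eqref{eq:fulld:u:hat}--\eqref{eq:fulld:s:hat} and $\vartheta_h=1$ in \eqref{eq:other:u}, then add \eqref{eq:fulld:u} and the components of \eqref{eq:fulld:s} and use Assumption~\ref{asumption:reactionterms}.'' You simply make explicit the one fact the paper leaves implicit, namely that $\aconv$, $\adiff$, $\adiffc$ and $\adiffs$ annihilate constant test functions because $\nabla 1=0$ and $\jump{1}=0$ on interior edges (and the edge sums avoid $\mathcal{E}_h^{\partial}$); your treatment of the first three identities and of the total-mass statement is correct.

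One point deserves flagging. Your computation for the fourth identity yields
\begin{equation*}
\int_{\Omega}\widetilde{u}_h^{n+1}=\int_{\Omega}u_h^{n+1}=\int_{\Omega}u_h^{n}+\Delta t\int_{\Omega}\RCtot\big(\widehat{\bB{c}}_h^{n+1},\widehat{\bB{s}}_h^{n+1}\big),
\end{equation*}
and you correctly stop short of claiming more: the stated equality $\int_{\Omega}\widetilde{u}_h^{n+1}=\int_{\Omega}u_h^{n}$ does \emph{not} follow unless $\int_\Omega\RCtot=0$, since only the sum $\sum_i\RC^{(i)}+\sum_j\RL^{(j)}$ vanishes by \eqref{eq:sumR}, not $\RCtot$ alone. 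This is a defect of the proposition as written (most plausibly the intended claim is $\int_{\Omega}\widetilde{u}_h^{n+1}=\int_{\Omega}u_h^{n+1}$, consistent with \eqref{eq:other:u} being an $\L^2$-projection), and the paper's one-line proof glosses over it; if you want your write-up to match the statement literally, you should either note this correction or add the hypothesis that the solid-phase reactions integrate to zero.
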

\begin{proof}
Using the test function $\varphi_h = 1\in\mathcal{P}_0(\Omega)$ in \eqref{eq:fulld:u:hat}--\eqref{eq:fulld:s:hat} and $\vartheta_h=1\in\mathcal{P}_1^{\rm cont}(\Omega)$ in \eqref{eq:other:u}, we get Equations~\eqref{eq:conservation:ucs}.
Then, adding up \eqref{eq:fulld:s:hat} by components together with \eqref{eq:fulld:u}, using Assumption~\ref{asumption:reactionterms} and integrating the result over $\Omega$, we obtain the desired conservation of mass.
\end{proof}

\begin{proposition}\label{prop:mass-lumping}
If mass lumping is used to compute the auxiliary variable $\widetilde{u}_h^{n+1}$ in \eqref{eq:other:u}, then $0\leq \widetilde{u}_h^{n+1}\leq \rhoC$ in $\overline{\Omega}$ for $n\geq 0$.
\end{proposition}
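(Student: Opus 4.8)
The plan is to use the fact that, under mass lumping, the $\mathrm{L}^2$-projection \eqref{eq:other:u} decouples into a family of scalar equations, one per vertex of $\mathcal{T}_h$, each of which exhibits the nodal value of $\widetilde{u}_h^{n+1}$ as a convex combination of the piecewise-constant values of $u_h^{n+1}$ on the elements surrounding that vertex. The bound then follows from the already established bound $0\le u_h^{n+1}\le\rhoC$ of Theorem~\ref{thm:maxprinciple:ODEs}.

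Concretely, I would let $\{\vartheta_a\}_a$ be the Lagrange (nodal) basis of $\mathcal{P}_1^{\rm cont}(\Omega)$ associated with the vertices $a$ of $\mathcal{T}_h$, and write $\widetilde{u}_h^{n+1}=\sum_a\widetilde{u}_a\vartheta_a$. Taking $\vartheta_h=\vartheta_a$ in \eqref{eq:other:u} and replacing the consistent mass matrix by its row sums (equivalently, evaluating the mass integrals by the vertex quadrature rule), the left-hand side becomes $\big(\sum_b(\vartheta_a,\vartheta_b)_{\Ltwo}\big)\widetilde{u}_a=(\vartheta_a,1)_{\Ltwo}\widetilde{u}_a=\big(\int_\Omega\vartheta_a\big)\widetilde{u}_a$, using the partition-of-unity identity $\sum_b\vartheta_b\equiv1$. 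For the right-hand side, since $u_h^{n+1}\in\mathcal{P}_0(\Omega)$ and $\int_K\vartheta_a=|K|/(d+1)$ for a vertex $a$ of a simplex $K$, one gets $(u_h^{n+1},\vartheta_a)_{\Ltwo}=\sum_{K\ni a}u_K^{n+1}\int_K\vartheta_a=\tfrac{1}{d+1}\sum_{K\ni a}|K|\,u_K^{n+1}$, and this same value is obtained whether or not the right-hand side is lumped, because vertex quadrature is exact for the product of a constant and an affine function on a simplex. Combining these, together with $\int_\Omega\vartheta_a=\sum_{K\ni a}\int_K\vartheta_a=\tfrac{1}{d+1}\sum_{K\ni a}|K|$, yields
\[
\widetilde{u}_a=\frac{\sum_{K\ni a}|K|\,u_K^{n+1}}{\sum_{K\ni a}|K|}.
\]

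To finish, I would observe that each $|K|>0$ (the simplices are nondegenerate, cf.~\eqref{eq:admissiblemesh}), so the coefficients $|K|/\sum_{K'\ni a}|K'|$ are nonnegative and sum to one; hence $\widetilde{u}_a$ is a convex combination of $\{u_K^{n+1}:K\ni a\}$, so $\min_{K\ni a}u_K^{n+1}\le\widetilde{u}_a\le\max_{K\ni a}u_K^{n+1}$. By Theorem~\ref{thm:maxprinciple:ODEs}, $0\le u_K^{n+1}\le\rhoC$ for every $K\in\mathcal{T}_h$, so $0\le\widetilde{u}_a\le\rhoC$ at each vertex. Finally, since $\widetilde{u}_h^{n+1}\in\mathcal{P}_1^{\rm cont}(\Omega)$, on each element its value is a convex combination of its nodal values via barycentric coordinates, so the bounds propagate from the vertices to all of $\overline{\Omega}$. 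There is no real obstacle here; the proof is essentially bookkeeping. The only points requiring a little care are to state precisely what "mass lumping" means (row-sum lumping, equivalently vertex quadrature for the mass term) and to verify that this produces strictly positive diagonal weights — which is exactly where the nondegeneracy of the mesh elements is used.
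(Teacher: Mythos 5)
Your proposal is correct and follows essentially the same route as the paper's proof: row-sum lumping of the mass matrix, the partition-of-unity identity to evaluate the diagonal entries, and the bounds $0\le u_K^{n+1}\le\rhoC$ to control each nodal value. Your version is slightly more explicit in exhibiting $\widetilde{u}_a$ as a convex combination (via $\int_K\vartheta_a=|K|/(d+1)$) and in noting that the bounds propagate from the vertices to all of $\overline{\Omega}$ by the barycentric representation, but the substance is identical.
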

\begin{proof}
See~Appendix~\ref{appendix:A}.
\end{proof}

For the Stokes system \eqref{syst:fulld:stokes}, we mention that the pair of discrete spaces $\spaceq$ and $\spacep$ is one of the stable pairs for which discrete inf-sup conditions can be shown \cite{Boffi2013} with the additional property of being the simplest pair of mass-preserving spaces.
The existence and uniqueness of this decoupled system are presented next.

\begin{lemma} \label{lem:solv:stokes}
Given $u_h^{n+1}\in\mathcal{P}_0(\Omega)$ and $\widetilde{u}_h^{n+1}\in \mathcal{P}^{\rm cont}_{1}(\Omega)$, then there exists a unique solution $(\bB{q}_h^{n+1},p_h^{n+1})\in \spaceq \times \spacep$  to the Stokes system~\eqref{syst:fulld:stokes} with forcing term given by $\mathbf{f}^{n+1}_h$ (cf.~Equation~\ref{eq:discrete:source:f}).
Moreover, there exists a constant $C>0$, independent of $h$, such that the following a-priori estimate holds
\begin{align*}
	\|\bB{q}_h^{n+1}\|_{1,\Omega} + \|p_h^{n+1}\|_{0,\Omega}\leq C \Big( g\Delta\rho|\Omega|^{1/2} + \Psi_{\max}' \|\nabla \widetilde{u}_h^{n+1}\|_{0,\Omega}\Big).
\end{align*}
\end{lemma}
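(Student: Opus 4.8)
The plan is to recast the decoupled Stokes system \eqref{syst:fulld:stokes} as a discrete saddle-point problem and to apply the Babu\v{s}ka--Brezzi theory to the finite-dimensional pair $\spaceq\times\spacep$. Multiplying \eqref{eq:fulld:q} by $-1$, I would set $a_h(\bB{q},\bB{w}):=\big(\nu(u_h^{n+1})\bB{\varepsilon}(\bB{q}),\bB{\varepsilon}(\bB{w})\big)_{\Ltwo}$, $b_h(\bB{w},\varphi):=-\big(\varphi,\div(\bB{w})\big)_{\Ltwo}$ and $\ell_h(\bB{w}):=-\big(\mathbf{f}_h^{n+1},\bB{w}\big)_{\Ltwo}$, so that the problem reads: find $(\bB{q}_h^{n+1},p_h^{n+1})\in\spaceq\times\spacep$ such that $a_h(\bB{q}_h^{n+1},\bB{w}_h)+b_h(\bB{w}_h,p_h^{n+1})=\ell_h(\bB{w}_h)$ for all $\bB{w}_h\in\spaceq$ and $b_h(\bB{q}_h^{n+1},\varphi_h)=0$ for all $\varphi_h\in\spacep$. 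The forcing functional $\ell_h$ is well defined since $\mathbf{f}_h^{n+1}$ is piecewise polynomial, hence in $\mathbf{L}^2(\Omega)$.

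First I would verify the four hypotheses of the Babu\v{s}ka--Brezzi theorem \cite{Boffi2013}. Boundedness of $b_h$ and of $\ell_h$ follows immediately from the Cauchy--Schwarz inequality. For $a_h$, since $\nu(u_h^{n+1})$ is a convex combination of $\nu_{\rm b}$ and $\nu_{\rm f}$, one has $\nu_{\min}:=\min\{\nu_{\rm b},\nu_{\rm f}\}\le\nu(u_h^{n+1})\le\max\{\nu_{\rm b},\nu_{\rm f}\}$ pointwise, independently of $h$ and of $u_h^{n+1}$; this gives both boundedness of $a_h$ and the lower bound $a_h(\bB{w},\bB{w})\ge\nu_{\min}\|\bB{\varepsilon}(\bB{w})\|_{\Ltwo}^2$. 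Because $\spaceq\subset\mathbf{H}_0^1(\Omega)$, the first Korn inequality together with the Poincar\'e inequality yields $\|\bB{\varepsilon}(\bB{w})\|_{\Ltwo}\ge C_{\rm K}\|\bB{w}\|_{1,\Omega}$, so $a_h$ is in fact coercive on all of $\spaceq$ (in particular on the discrete kernel of $b_h$), with constant independent of $h$. Finally, the required discrete inf-sup estimate on $b_h$ over $\spaceq\times\spacep$ is the classical stability of the $\mathbf{P}_2$--$P_0$ pair, which I would quote from \cite[Section~8.4.3]{Boffi2013}: there is $\beta^{\ast}>0$, independent of $h$, such that $\sup_{\bB{0}\neq\bB{w}_h\in\spaceq} b_h(\bB{w}_h,\varphi_h)/\|\bB{w}_h\|_{1,\Omega}\ge\beta^{\ast}\|\varphi_h\|_{\Ltwo}$ for every $\varphi_h\in\spacep$.

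With these ingredients, the Babu\v{s}ka--Brezzi theorem delivers a unique pair $(\bB{q}_h^{n+1},p_h^{n+1})\in\spaceq\times\spacep$ together with the stability bound $\|\bB{q}_h^{n+1}\|_{1,\Omega}+\|p_h^{n+1}\|_{\Ltwo}\le C\,\|\ell_h\|_{(\spaceq)'}\le C\,\|\mathbf{f}_h^{n+1}\|_{\Ltwo}$, where $C$ depends only on $\nu_{\min}$, $\max\{\nu_{\rm b},\nu_{\rm f}\}$, $\beta^{\ast}$, $C_{\rm K}$ and the Poincar\'e constant. It then remains to bound $\|\mathbf{f}_h^{n+1}\|_{\Ltwo}$ from \eqref{eq:discrete:source:f}. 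Since $\widetilde{u}_h^{n+1}\in\mathcal{P}_1^{\rm cont}(\Omega)$ is piecewise affine, its elementwise second-order term $\nabla^2\widetilde{u}_h^{n+1}$ vanishes, so the capillary contribution reduces to $\eta\,\Psi'(u_h^{n+1})\nabla\widetilde{u}_h^{n+1}$; using $0\le u_h^{n+1}\le\rhoC$ from Theorem~\ref{thm:maxprinciple:ODEs} (whence $\|u_h^{n+1}\|_{\Ltwo}\le\rhoC|\Omega|^{1/2}$) and the bound $|\Psi'(u_h^{n+1})|\le\Psi'_{\max}$ from \eqref{eq:bound:dPsi}, one obtains $\|\mathbf{f}_h^{n+1}\|_{\Ltwo}\le g\Delta\rho|\Omega|^{1/2}+\eta\,\Psi'_{\max}\|\nabla\widetilde{u}_h^{n+1}\|_{\Ltwo}$, and absorbing $\eta$ into $C$ gives the claimed a-priori estimate.

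The only genuinely delicate point is the discrete inf-sup condition, which I would not reprove but simply invoke from \cite{Boffi2013} for this lowest-order mass-conserving stable Stokes pair; everything else is routine saddle-point theory combined with the concentration bounds already available. A further minor subtlety is the second-order term in \eqref{eq:discrete:source:f}, which is harmless precisely because $\widetilde{u}_h^{n+1}$ is piecewise linear --- had a higher-degree reconstruction of $u_h^{n+1}$ been used, one would instead control $\|\nabla^2\widetilde{u}_h^{n+1}\|_{\Ltwo}$ via an inverse estimate, at the price of an $h^{-1}$ factor in the bound.
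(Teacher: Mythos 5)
Your proposal is correct and follows essentially the same route as the paper: recast \eqref{syst:fulld:stokes} as a conforming saddle-point problem, obtain coercivity of the viscous form from $\nu\ge\nu_{\min}$ together with Korn and Poincar\'e, quote the discrete inf-sup stability of the $\mathbf{P}_2$--$P_0$ pair from \cite[Section~8.4.3]{Boffi2013}, and bound $\|\mathbf{f}_h^{n+1}\|_{0,\Omega}$ using $0\le u_h^{n+1}\le\rhoC$, $|\Psi'|\le\Psi'_{\max}$ and the vanishing of the second-order term of the piecewise-affine $\widetilde{u}_h^{n+1}$. The only cosmetic difference is that the paper invokes \cite[Proposition~2.42]{Ern2004} for the final abstract stability estimate rather than naming the Babu\v{s}ka--Brezzi theorem directly.
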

\begin{proof}
We first introduce the discrete bilinear forms $\bB{a}_h:\spaceq\times\spaceq\to \mathbb{R}$ and $\bB{b}_h:\spaceq\times \spacep\to \mathbb{R}$ defined by
\begin{align*}
	\bB{a}_h(u_h^{n+1}; \bB{q}_h,\bB{w}_h):=\big( \nu(u_h^{n+1})\bB{e}(\bB{q}_h), \bB{e}(\bB{w}_h) \big)_{\Ltwo},\qquad
	\bB{b}_h(\varphi_h, \bB{w}_h):= \big( \varphi_h, {\rm div}(\bB{w}_h) \big)_{\Ltwo}.
\end{align*}
Then, we can write the Stokes system \eqref{syst:fulld:stokes} as the following conforming saddle point problem: Find $(\bB{q}_h^{n+1},p_h^{n+1})\in\spaceq\times \spacep$ such that
\begin{alignat*}{3}
	&\bB{a}_h(u_h^{n+1}; \bB{q}_h^{n+1},\bB{w}_h) & \,-\, \bB{b}_h(p_h^{n+1}, \bB{w}_h) &= -(\mathbf{f}^{n+1}_h,\bB{w}_h)_{\Ltwo}\,\qquad && \forall \bB{w}_h\in \spaceq,\\
	& & \bB{b}_h(\varphi_h, \bB{q}_h^{n+1})  & = 0\,\qquad && \forall \varphi_h \in\spacep.
\end{alignat*}
Next, the ellipticity of $\bB{a}_h$ onto $\spaceq\subseteq \mathbf{H}_0^1(\Omega)$ is concluded thanks to Korn and Poincaré inequalities in the following way
\begin{align*}
	\bB{a}_h(u_h^{n+1};\bB{w}_h,\bB{w}_h) & \,\geq \, \nu_{\rm min} \big(\bB{e}(\bB{w}_h), \bB{e}(\bB{w}_h) \big)_{\Ltwo}
	\,\geq\, \dfrac{\nu_{\rm min}}{2} \|\nabla\bB{w}_h\big\|_{0,\Omega}^2
	\,\geq\, \alpha_{\rm ell}\|\bB{w}_h\big\|_{1,\Omega}^2,
\end{align*}
for all $\bB{w}_h\in \spaceq$, where $\alpha_{\rm ell}:= (\nu_{\rm min} c_{\rm p})/2>0$ is the ellipticity constant with $c_{\rm p}>0$ being the constant of Poincaré's inequality, and $\nu_{\rm \min}\coloneqq\min\{\nu_{\rm b},\nu_{\rm f}\}$.
On the other hand, using \cite[Proposition 8.4.3.]{Boffi2013}, we can conclude that the bilinear form $\bB{b}_h$ defined over the pair of spaces $\spaceq$ and $\spacep$ fulfils the inf-sup condition \cite[Eq.~(8.2.16)]{Boffi2013}, and therefore the chosen pair of spaces is stable. 
Making use of the upper bound of $u_h^{n+1}$ in Lemma~\ref{lem:maxprinciple:u} and inequality~\eqref{eq:bound:dPsi}, we see that the source term is bounded by
\begin{align*}
	\|\mathbf{f}_h^{n+1}\|_{0,\Omega}\leq \dfrac{g\Delta \rho}{\rhoC}\|u_h^{n+1}\|_{0,\Omega} + \|\eta\Psi'(u_h^{n+1})\nabla \widetilde{u}_{h}^{n+1}\|_{0,\Omega}\leq g\Delta \rho|\Omega|^{1/2} + \eta\Psi_{\max}' \|\nabla \widetilde{u}_{h}^{n+1}\|_{0,\Omega},
\end{align*}
where we have used the fact that $\div(\nabla \widetilde{u}_h^{n+1}) = 0$.
Finally, we can conclude the claim by simply applying \cite[Proposition 2.42]{Ern2004}.
\end{proof}

\section{Numerical simulations}\label{sec:simulations}
For the numerical simulations presented in this section, we have implemented the fully coupled numerical scheme~\eqref{syst:scheme:fulld} and \eqref{syst:fulld:stokes} in Python making use of the open source finite element library FEniCS (legacy version 2019.1.0)~\cite{FEniCS}.
For all examples, the nonlinear system in system~\eqref{syst:scheme:fulld} is solved by the Newton-Raphson algorithm with null initial guess.
We use an absolute and relative tolerance of $10^{-8}$, and the solution of tangent systems resulting from the linearization is performed making use of the multi-frontal massively parallel sparse direct solver MUMPS~\cite{Amestoy2000}.

In order to show the performance of the method with the boundary conditions assumed in the analysis in Section~\ref{sec:scheme:properties}, we will perform two simulations with a simple domain and initial conditions.
In addition, we will demonstrate that the model and numerical scheme can be extended to the case of other boundary conditions by simulating the upper part of an SSF with in- and outflows.

\subsection{Simplified reaction model}
We consider a reduced system with $\kC = \kL = 2$ components representing a biofilm, composed of one heterotrophic solid phase with concentration~$c^{(1)}$ and one phototrophic $c^{(2)}$, with dissolved oxygen, acting as a nutrient, of concentration~$s^{(1)}$ in water $s^{(2)}$.
Since we have
\begin{equation*}
	u = c^{(1)} + c^{(2)},\qquad \rhoL\left(1 - \frac{u}{\rhoC}\right) = s^{(1)} + s^{(2)},
\end{equation*}
we can obtain $c^{(2)}$ and $s^{(2)}$ by solving only Equation~\eqref{eq:solids} for~$c^{(1)}$ and~\eqref{eq:substrates} for~$s^{(1)}$.
For simplicity, we will consider only two reactions: heterotrophs feeding on phototrophs at the reaction rate~$r^{(1)}$ and phototrophs growing off nutrients at the rate~$r^{(2)}$, where the latter uses $c^{(1)}$ as a proxy for nutrients obtained from heterotroph death.
In the context of~Section~\ref{sec:model:reactions}, the reaction terms for the solid and substrate components are defined by the following stoichiometric matrices and vector of reaction rates:
\begin{align*}
	\bB{\sigma}_{\bB{c}} = \begin{bmatrix}
		1 & -1/2\\
		-1/2 & 1
	\end{bmatrix}, \qquad
	\bB{\sigma}_{\bB{s}} = -\dfrac{1}{2}\begin{bmatrix}
		1 & 1
	\end{bmatrix},\qquad
	\bB{r}(\bB{c},\bB{s}) = c^{(1)} c^{(2)} s^{(1)}\begin{bmatrix}
		\displaystyle R_1 \frac{1}{s^{(1)} + \mathcal{K}_{11}}\frac{1}{c^{(2)} + \mathcal{K}_{12}}
		\\[1.5ex]
		\displaystyle R_2  \frac{1}{s^{(1)} + \mathcal{K}_{21}}\frac{1}{c^{(1)} + \mathcal{K}_{22}}
	\end{bmatrix},
\end{align*}
where~$R_1$ and~$R_2$ [\unit{\per\ourtime}] are maximal rate constants and $\mathcal{K}_{ij}$ half-saturation constants.

\subsection{Interaction of two circular masses}
For this example, we set the unit square domain $\Omega = [0, 1]^2$ and the boundary conditions used in the analysis of the method, which are zero flux for the concentrations and no-slip conditions for the velocity field (cf.~\eqref{def:boundary:conditions:mu}--\eqref{def:boundary:conditions:mu}). Save for $R_1$ and $R_2$, the model parameters are
\begin{align} \label{eq:parameters:sim}
\begin{split}
	\rhoC = \qty{1117}{\concentration}, \quad 
	\rhoL = \qty{998}{\concentration}, \quad 
	\lambda = \qty{200}{\mobility}, \quad
	\kappa = \qty{0.5e-8}{\kappaunit},\quad 
	\phi_{\ast} = 0.01, \\
	\gamma = 0,\quad 
	\eta = \qty{1e-4}{\capillary},\quad 
	\nu_{\rm b} = \qty{1e-3}{\viscosity},\quad 
	\nu_{\rm f} = \qty{1}{\viscosity},\quad 
	g = \qty{9.81}{\acceleration},\\
	\mathcal{K}_{11} = \mathcal{K}_{21} = \qty{2e-2}{\concentration},\quad  
	\mathcal{K}_{12} = \qty{1e-2}{\concentration},
	\quad \mathcal{K}_{22} = \qty{4e-2}{\concentration}
	.
\end{split}
\end{align}
These parameters are chosen to show the qualitative behaviour without too long simulations; therefore, the time snapshots in figures are fractions of a second.
To define the initial conditions of each concentration variable, we consider smooth decaying functions ${C_0}$ with a specified constant value $\phi_0$ inside a circular disc defined by
\begin{equation*}
	{C_0}(\bm{x}; \bm{x}_0, r_0, \phi_0) \coloneqq \displaystyle \phi_0\frac{1}{2}\left( \tanh\left( \frac{r_0 - \| \bm{x} - \bm{x}_0 \|}{\sqrt{2\kappa}}\right) + 1\right),
\end{equation*}
for all $\bB{x} = (x_1,x_2)\in \Omega$, where $\bB{x}_0\in\Omega$ is the centre of a circle with $r_0>0$ its radius.

The initial state of the solid sub-phases consists of two masses in circular discs, where the top one has a considerably higher biomass concentration, with a mass of substrate $s^{({1})}$ at the centre of the domain.
More precisely, we have
\begin{align} \label{eq:initial:sim}
\begin{aligned}
	u(\bm{x}, 0) &= \rhoC{C_0}\big(\bm{x}; (\widetilde{x}_{\rm L}, 0.75)^\ttt, 0.2, 0.02\big)
	+ \rhoC{C_0}\big(\bm{x}; (\widetilde{x}_{\rm R}, 0.35)^\ttt, 0.2, 0.008\big),\\[1ex]
	c^{(1)}(\bm{x}, 0) &= \rhoC{C_0}\big(\bm{x}; (\widetilde{x}_{\rm L}, 0.75)^\ttt, 0.2, 0.01\big)
	+ \rhoC{C_0}\big(\bm{x}; (\widetilde{x}_{\rm R}, 0.35)^\ttt, 0.2, 0.002\big), \\[1ex]
	s^{(1)}(\bm{x}, 0) &= \rhoL{C_0}\big(\bm{x}; (0.5, 0.5)^\ttt, 0.25, 0.2\big).
	\end{aligned}
\end{align}
where $\widetilde{x}_{\rm L}$ and $\widetilde{x}_{\rm R}$ respectively assume the two values~$\widetilde{x}_{\rm L}=0.35$ and $\widetilde{x}_{\rm R} = 0.65$ (Figure~\ref{fig:blob0}), and~$\widetilde{x}_{\rm L}=0.45$ and $\widetilde{x}_{\rm R} = 0.55$ (Figure~\ref{fig:blob:reactions}), where the latter means that the two masses are closer initially.

\begin{figure}[!t]
\centering
\includegraphics[scale=1]{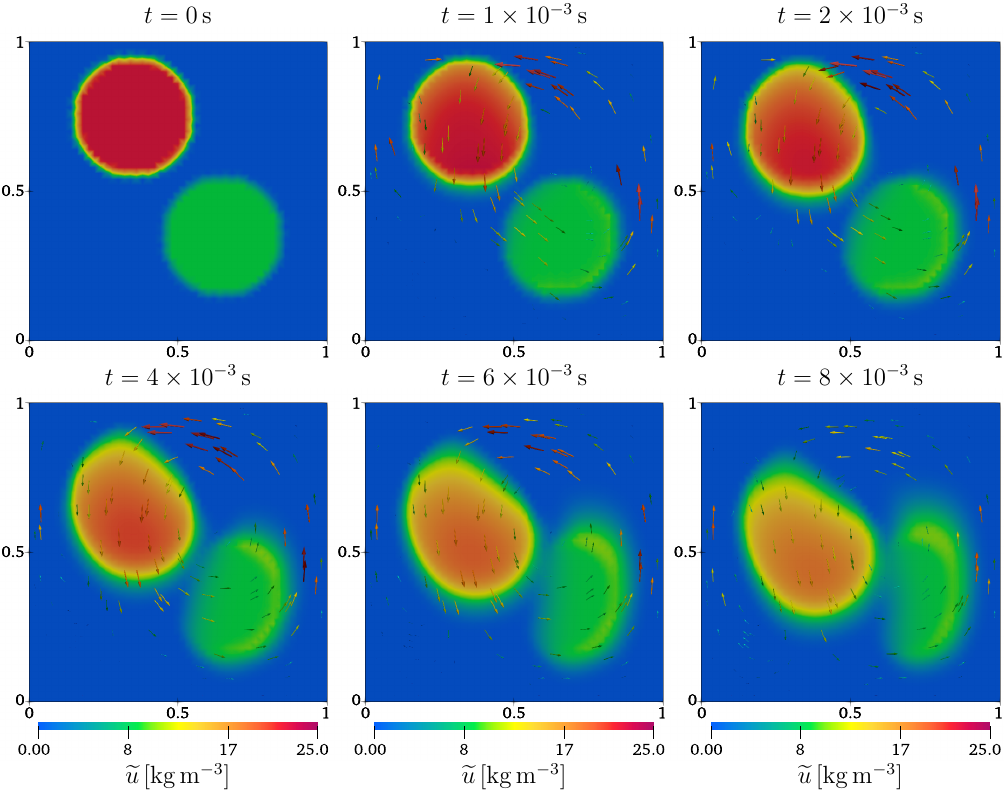}
	\caption{Interaction of two circular masses a distance apart without any reactions: Biofilm concentration $\widetilde{u}$ and mixture velocity $\bB{q}$ (arrows) with zero boundary conditions at six time points, with $R_1=R_2=0$. Both initial circles have the same radius $0.2$, and are initially centred at $(0.35, 0.75)^\ttt$ and $(0.65, 0.35)^\ttt$, respectively.}
	\label{fig:blob0}
\end{figure}%

\begin{figure}[!t]
\centering
\includegraphics[scale=1]{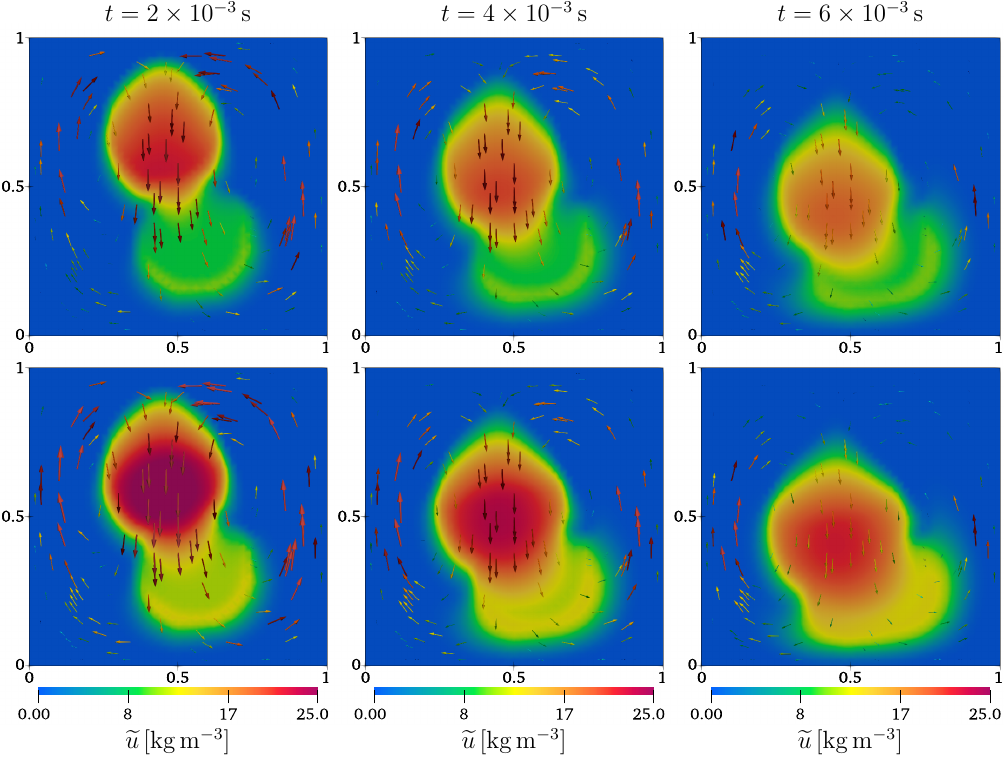}
	\caption{Interaction of two circular masses closer together: Biofilm concentration $\widetilde{u}$ and mixture velocity $\bB{q}$ (arrows) with zero boundary conditions for the case without reactions (top) and with reactions $R_1 = 100$~\unit{\per\ourtime} and $R_2 = 1000$~\unit{\per\ourtime} (bottom) at three time points.
	Both the initial discs of masses have the same radius~$0.2$, and are centred at $(0.45, 0.75)^\ttt$ and $(0.55, 0.35)^\ttt$, respectively.
	}
	\label{fig:blob:reactions}
\end{figure}%

\begin{figure}[!t]
	\centering
	\includegraphics[width=0.95\textwidth]{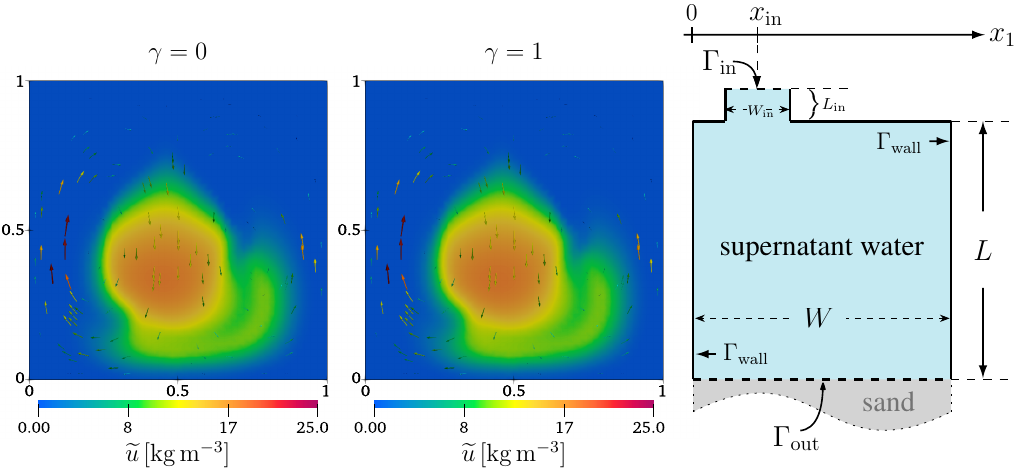}
	\caption{Interaction of two circular masses: Comparison of the approximated $\widetilde{u}$ with $\gamma=0$ (left plot) and $\gamma=1$ (middle plot) at $t= 8\times10^{-3}\,\rm s$. Right plot: Schematic of the supernatant water region of an SSF with inflow at the top boundary in $\Gamma_{\rm in}$ and outflow at the bottom boundary $\Gamma_{\rm out}$. On $\Gamma_{\rm wall}$, no-slip boundary conditions are considered.
	\label{fig:blob_gamma:ssf_schematic}}
\end{figure}

\subsubsection*{The effect of reactions}

For the first example, we consider the initial conditions in \eqref{eq:initial:sim} with the two circular regions defining~$u$ and~$c^{(1)}$ with the centre of the upper circle at $\widetilde{x}_L = 0.35$, and centre of the lower circle at $\widetilde{x}_{\rm R} = 0.65$. We set the reaction rates to $R_1 =R_2= 0$. In Figure~\ref{fig:blob0}, we show snapshots of the approximate concentration~$\widetilde{u}$ computed at six time points $t= k\times10^{-3}\,\rm s$, with $k=0,1,2,4,5,8$.
The arrows in the figure represent the approximate vector field $\bB{q}$ where the colours are according to the magnitude $\|\bB{q}\|_{0,\Omega}$. As expected, the heavier top mass settles faster than the lighter bottom one. The latter is pushed into a thin layer before the former catches up with it. This movement induces a circular flow, and the discs are deformed due to the mass transfer and the Cahn-Hilliard based relative velocity \eqref{eq:vrel}. Although the deformations of both circular mass regions are prominent, the masses have not completely merged at $t=8\times10^{-3}\,\rm s$.

Next, we consider initial circular-disc masses for~$u$ and~$c^{(1)}$ in a closer configuration with $\widetilde{x}_{\rm L} = 0.45$ and $\widetilde{x}_{\rm R} = 0.55$ in \eqref{eq:initial:sim}. In addition, we study the effect of the reaction terms by comparing two cases, one without reactions ($R_1 = R_2 = 0$) and one with non-zero reaction terms ($R_1 = 100$~\unit{\per\ourtime}, $R_2 = 1000$~\unit{\per\ourtime}). Figure~\ref{fig:blob:reactions} shows snapshots of $\widetilde{u}$ in the two cases.
When no reactions are present, the evolution of the system is mainly driven by gravity and convection and diffusive fluxes. However, when reactions are present, the mass transfer between species makes a significant change and the two discs of concentration merge smoothly already at $t=6\times10^{-3}\,\rm s$. With reactions, we observe higher values for the biofilm concentration, with the fused mass more spread out than in the case without reactions. Furthermore, unlike the simulation shown in Figure~\ref{fig:blob0}, the smaller distance between the initial discs gives rise to a less pronounced bulk velocity which also facilitates the blending. The individual concentrations~$c^{(1)}$ and $c^{(2)}$ in these simulations, follow roughly the same shape as the entire biofilm phase concentration~$u$, wherefore we do not plot them.

\subsubsection*{Mobility with $\gamma=0$ and $\gamma=1$}

To visualize the possible influence the parameter~$\gamma$ at the mobility function \eqref{eq:M} has on the output, we consider the scenario with reactions shown in Figure~\ref{fig:blob:reactions}, and set $\gamma=1$. Such a mobility function can be found in the model presented in~\cite{Chatelain2011b}. We show a comparison between the cases $\gamma = 0$ and $\gamma = 1$ at time $t=8\times10^{-3}\,\rm s$ in the first two plots in Figure~\ref{fig:blob_gamma:ssf_schematic}.
There is no noticeable difference, which can be explained by the fact that the values of~$u$ are much smaller than~$\rhoC$, making the term $(1 - u/\rhoC)^{1 + \gamma}$ play a much smaller role as $u/\rhoC$ tends to be small independent of the value of~$\gamma$.

\subsection{Simulation of  biofilm growth in the supernatant water of an SSF}

In the upcoming examples, we consider an inflow boundary condition at a piece of the boundary~$\Gamma_{\rm in}\subset\Gamma$ and an outflow boundary condition at $\Gamma_{\rm out}\subset \Gamma$, extending the boundary conditions given in \eqref{def:boundary:conditions:q}--\eqref{def:boundary:conditions:mu}. We consider the following two-dimensional domain of the supernatant water region where the origin is located at the lower left corner; see Figure~\ref{fig:blob_gamma:ssf_schematic} (right).
Let $\Omega_{\rm filter} \coloneqq [0, W] \times [0, L]$ be the longitudinal cut of the supernatant water region above the sand bed of an SSF; thus, with width~$W$ and height~$L$.
We let $\Omega_{\rm inlet}$ be a rectangular inlet centred at~$x_{\rm in}$ with width~$W_{\rm in}$ and pipe length~$L_{\rm in}$ given by
\begin{align*}
\Omega_{\rm inlet} \coloneqq \Big[x_{\rm in} - \tfrac{W_{\rm in}}{2}, x_{\rm in} + \tfrac{W_{\rm in}}{2}\Big] \times \Big[L, L+L_{\rm in}\Big].
\end{align*}
The entire domain in this case is $\Omega = \Omega_{\rm filter} \cup \Omega_{\rm inlet}$ and the inlet boundary $\Gamma_{\rm inlet}$ is located at the top segment of $\Omega_{\rm inlet}$, while the outlet boundary $\Gamma_{\rm out}$ is the bottom segment of $\Omega_{\rm filter}$. The remaining part of the boundary is denoted by $\Gamma_{\rm wall}$.
These segments are defined by
\begin{align*}
 \Gamma_{\rm in} \coloneqq \Big[x_{\rm in} - \tfrac{W_{\rm in}}{2}, x_{\rm in} + \tfrac{W_{\rm in}}{2}\Big] \times \big\{L+L_{\rm in}\big\},\qquad \Gamma_{\rm out} \coloneqq [0, W] \times \{0\},\qquad
 \Gamma_{\rm wall} \coloneqq \Gamma \setminus (\Gamma_{\rm in} \cup \Gamma_{\rm out}).
\end{align*}
To describe the in- and outflow boundary conditions related to $\bB{q}$, we set a maximum inlet velocity $q_{\max} \geq 0$, and define parabolic inlet and outlet profiles $q_{\rm in}$,  $q_{\rm out}$ as
\begin{align*}
	q_{\rm in}(x) &\coloneqq \frac{4q_{\max}}{W_{\rm in}^2}\Big(x - \big(x_{\rm in} - \tfrac{W_{\rm in}}{2}\big)\Big)\Big(x - \big(x_{\rm in} + \tfrac{W_{\rm in}}{2}\big)\Big)\quad \text{and}\quad
	q_{\rm out}(x) \coloneqq \frac{4q_{\max}}{W^2} \Big(\dfrac{W_{\rm in}}{W}\Big) x (x - W).
\end{align*}
We consider the following non-zero Dirichlet boundary conditions for the velocity
\begin{align*}
	\bm{q} &\coloneqq \begin{cases}
		\bm{0}, &\text{on $\Gamma_{\rm wall}$,}\\
		(0, q_{\rm in}(x_1)) & \text{on $\Gamma_{\rm in}$,} \\
		(0, q_{\rm out}(x_1)) & \text{on $\Gamma_{\rm out}$,} 
	\end{cases}
\end{align*}
and the following alternative flux conditions for the rest of the system ($i,j=1,2$)
\begin{alignat*}{4}
	(\bm{q} + M(u)\nabla\mu)\cdot\nn &= 0 \quad &&\text{on $\Gamma$,}\\
	c^{(i)}(\bm{q} + M_{\bB{c}}(u)\nabla\mu)\cdot\nn &= 0 \quad &&\text{on $\Gamma$,} \\
	s^{(j)}(\bm{q} + M_{\bB{c}}(u)\nabla\mu)\cdot\nn &= 0 \quad &&\text{on $\Gamma_{\rm wall}$,} \\
	s^{(j)} &= s^{(j)}_{\rm in}  \quad &&\text{on $\Gamma_{\rm in}$,}
\end{alignat*}
with constant incoming concentration $s_{\rm in}^{(j)} \geq 0$ and free outflow of $s^{(j)}$ through $\Gamma_{\rm out}$.

To define the initial concentrations, we define a vertical step function $\mathcal{H}_0$ given by a constant value $\phi_0$ below a certain height $y_0$ and zero otherwise:
\begin{equation*}
	\mathcal{H}_0(\bm{x}; y_0, \phi_0) \coloneqq \begin{cases}
		\phi_0, & x_2 < y_0, \\
		0, & x_2 \geq y_0.
	\end{cases}
\end{equation*}
Then, we set the initial state of the filter as
\begin{align*}
	u(\bB{x}, 0 ) & = \rhoC\mathcal{H}_0\big(\bm{x}; 0.6, 0.05\big), \qquad
	c^{(1)}(\bB{x}, 0 ) = \rhoC\mathcal{H}_0\big(\bm{x}; 0.6, 0.025\big), \qquad
	s^{(1)}(\bB{x}, 0 ) = 0.
\end{align*}
In order to highlight the qualitative behaviour of the biofilm model, we exaggerate physical quantities and use the following parameters for simulation:
\begin{gather*}
	\lambda = \qty{150}{\mobility}, \quad
	\kappa = \qty{1e-8}{\kappaunit},\quad 
	\eta = \qty{1e-6}{\capillary},\quad 
	W = \qty{0.5}{\metre},\quad
	L = \qty{0.5}{\metre}, \\
	x_{\rm in} = \qty{0.1}{\metre}, \quad
	W_{\rm in} = \qty{0.1}{\metre},\quad
	L_{\rm in} = \qty{0.025}{\metre}, \quad
	q_{\rm in} = \qty{300}{\metre\per\ourtime}, \quad
	s^{(1)}_{\rm in} = \num{0.99}\rhoL,
\end{gather*}
where the rest of parameters are taken as in \eqref{eq:parameters:sim}, while $R_1$ and $R_2$ are specified next.

\begin{figure}[!t]
\centering
\includegraphics[scale=1]{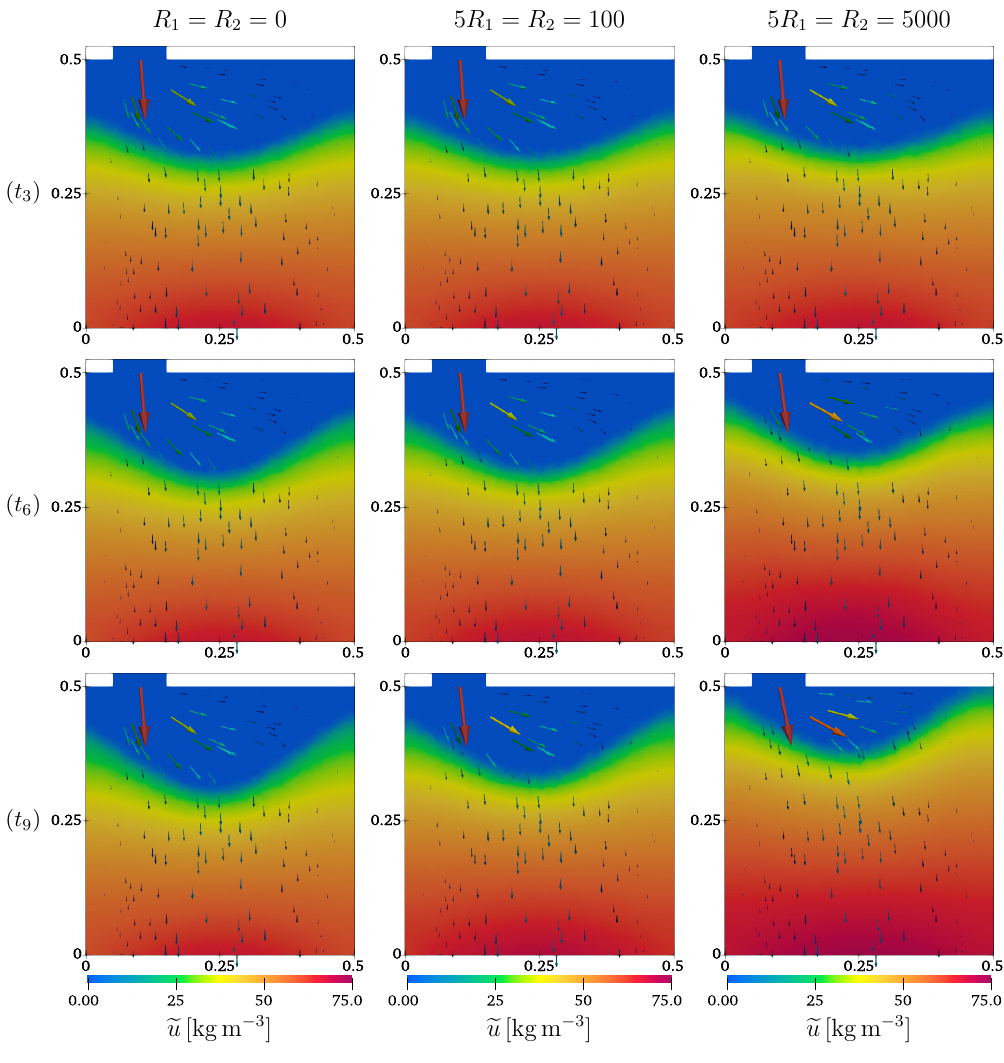}
	\caption{Simulations of an SSF: Total biofilm concentration $\widetilde{u}$ and mixture velocity $\bB{q}$ (arrows) with in- and outflow boundary conditions for zero, medium and large reactions (columns). The time points (rows) are $t_3 \coloneqq 3\times10^{-3}\,\rm s$, $t_6 \coloneqq 6\times10^{-3}\,\rm s$ and $t_9 \coloneqq 9\times10^{-3}\,\rm s$.}
	\label{fig:filter}
\end{figure}%

Simulations for the described scenario are shown in Figures \ref{fig:filter} and \ref{fig:filter_components}. In Figure~\ref{fig:filter}, we show the impact of increasing the reaction rates on the total biofilm concentration $\widetilde{u}$. To illustrate this, we simulate three cases. The first case without reaction terms with $R_1=R_2 = 0$ (1st column), the second with $R_1 = \qty{20}{\per\ourtime}$ and $R_2 = \qty{100}{\per\ourtime}$ (2nd column), and the third with $R_1 = \qty{1000}{\per\ourtime}$ and $R_2 = \qty{5000}{\per\ourtime}$ (3rd column). Three time snapshots (rows) are shown at $t_3 = 3\times10^{-3}\,\rm s$ (1st row), $t_6= 6\times 10^{-3}\,\rm s$ (2nd row) and $t_9 = 9\times10^{-3}\,\rm s$ (3rd row).
The first no-reaction case reveals that as the system is fed with substrate $s^{(1)}_{\rm in}$, the main variation of $\widetilde{u}$ is taking place in an interval of values near $25\,\rm kg\,m^{-3}$, which can be influenced by the dynamics of the velocity field $\bB{q}$ and diffusive terms. With higher reactions (2nd and 3rd columns), the simulations look qualitatively the same at~$t=t_3$. However, at $t_6$ and $t_9$, we can observe the actual biofilm growth, which due to the bulk flow~$\bB{q}$, form a convex shape. As expected, larger reactions imply a more prominent biofilm growth.

We now choose the medium case in Figure~\ref{fig:filter} and show in Figure~\ref{fig:filter_components} snapshots of the concentration of each species. At the four times shown (including $t_1 = 10^{-3}\,\rm s$), we observe that, while $c^{(1)}$ decreases in a region next to the middle line $x_1 = W/2$, the concentration of $c^{(2)}$ gradually increases and form an oval region of high concentration towards $\Gamma_{\rm out}$. On the other hand, since $s^{(1)}$ is fed at $\Gamma_{\rm in}$, this concentration increases and moves downwards, while $s^{(2)}$ is consumed. The black curve in Figure~\ref{fig:filter_components} is the solid-liquid interface defined by a large gradient of~$\widetilde{u}$. In all simulations, the concentrations stay positive and bounded.

\begin{figure}[!t]
\centering
\includegraphics[scale=1]{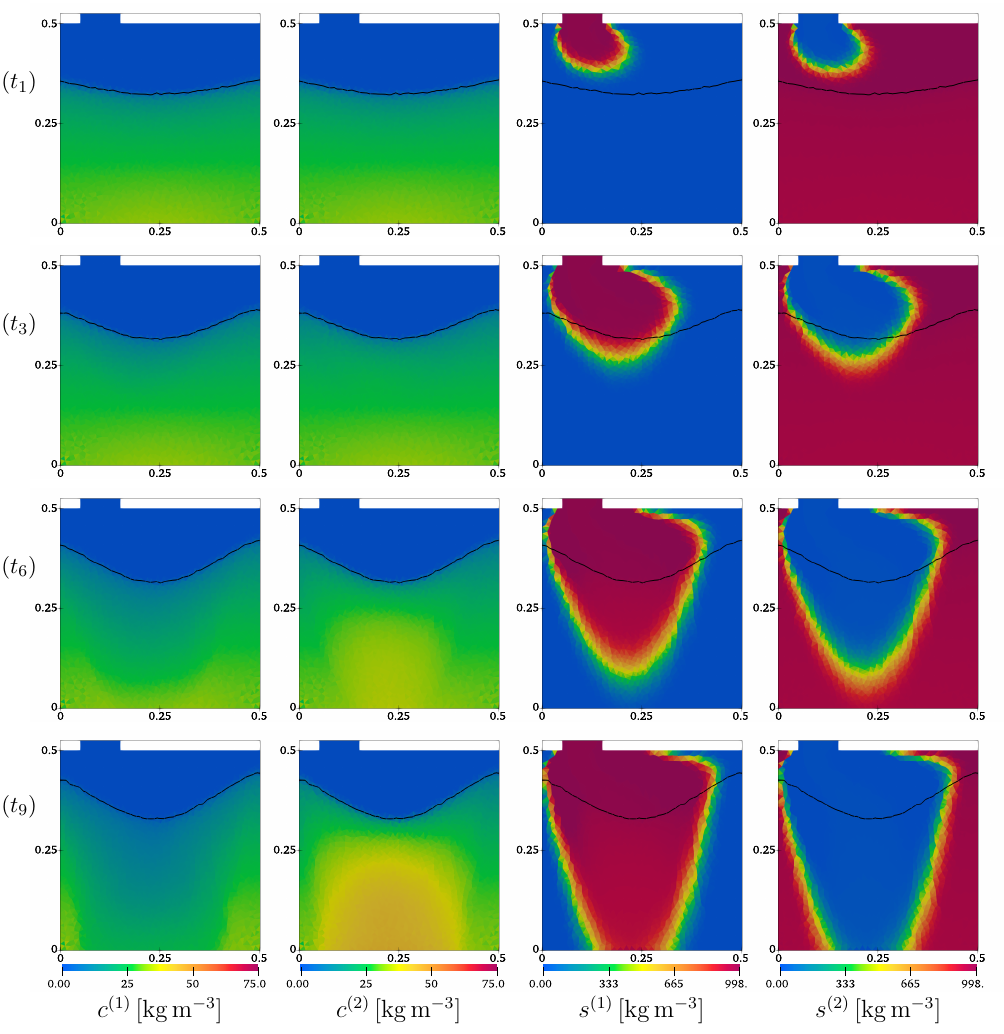}
\caption{Simulation of an SSF: Component concentrations $c^{(1)}$ (1st column), $c^{(2)}$ (2nd column), $s^{(1)}$ (3rd column) and $s^{(2)}$ (4th column) for the case of in- and outflow boundary conditions and $5R_1=R_2 = \qty{100}{\per\ourtime}$. The time points are $t_1 = 10^{-3}\,\rm s$, $t_3 = 3\times10^{-3}\,\rm s$, $t_6 = 6\times10^{-3}\,\rm s$ and $t_9 = 9\times10^{-3}\,\rm s$. The black contour line corresponds to the solid-liquid interface of $\widetilde{u}$.}
	\label{fig:filter_components}
\end{figure}%

\section{Conclusions}\label{sec:conclusions}

We introduce a model of biofilm growth in several spatial dimensions within the upper part of an SSF that incorporates the transport of solid components and soluble substrates, and stationary Stokes equations for the mixture flow.
At the top of the sand bed in an SSF, biofilm establishes, the \emph{Schmutzdecke}, and grows upwards into the supernatant water of the SSF due to biological processes.
The total biofilm concentration is modelled by an advective Cahn--Hilliard equation with reactions, which connects all the equations of the coupled system, and therefore plays a central role in the dynamics of the process. 

We exploit the assumption that the relative velocity between the solid and liquid phases depends on the total biofilm concentration and formulate the system of equations following the structure arising in models of reactive sedimentation~\cite{Burger2021,Burger2022a}.
The nonlinear reaction terms in the vector equations for the solid and substrate subphases drive mass transfer between such, particularly, the consumption of soluble substrates.
Some key ingredients in the derivation of the approximate momentum equation are the smallness of the density difference between the solid and liquid phases, and the fact that in SSF when no in- or outflows take place, the acceleration terms can be neglected, thus arriving at the Stokes equations.
Although we consider a stationary flow equation, the coupling with the concentration equations induce implicitly a time dependency on the velocity field.
We also include a viscosity function, which depends on the total biofilm concentration as a convex combination of the two main phases and a source term accounting for capillary forces~\cite{Dlotko2022, Han2021}.

For the numerical treatment, we extend the discontinuous-Galerkin-based numerical scheme proposed in \cite{Acosta2023a} to include the coupling with vector equations of convection-reaction type. The scheme is designed to incorporate upwind numerical fluxes to approximate the integrals over the edges of the triangulation arising from the integration-by-parts process. Both the discrete formulation and flux approximation of the solid components' equations are developed such that the sum of the solid components is equal to the total biofilm concentration at each time point. In addition, the time splitting of the reaction terms makes it possible to prove the non-negativity of the unknown concentrations.
For the flow equations, we employ a conservative primal formulation for the velocity-pressure pair, where the velocity field is in an $H^1$-conforming subspace while the pressure is piecewise constant. The main result of the paper is an invariant-region property of the discrete concentration unknowns, namely that all concentrations are non-negative and the total solids concentration has the solid density as the upper bound; hence, all solid subphases are also bounded above.

Numerical simulations of a simplified model of biofilm growth with two solids and two liquid phases demonstrate the performance of the model, and the invariant-region property of the numerical scheme.
Two examples show cases with and without reactions.
Another example shows that the scheme in fact can be straightforwardly adapted to handling in- and outflow boundary conditions.
That example is in qualitative agreement with the one-dimensional simulations shown in~\cite{Diehl2025}, where a more comprehensive biological model is used.
In principle, any biological model could be used within the presented model.

Future work can include different directions.
The SSF model in~\cite{Diehl2025} includes partly the processes of biofilm growth and flows within the pores of the entire sand bed below the supernatant water., and partly another main phase consisting of suspended particles in the liquid phase.
This suspension can be entrapped within the biofilm in addition to flowing outside the biofilm, and transfer terms in the form of discrete diffusion between these two volumes are present in addition to biological reactions.
Still other transfer terms are the attachment of solid particles to sand grains and the biofilm, and detachment of solids from these as an increasing function of the mixture velocity.
In two or three dimensions, this could be modelled by considering an interface problem between supernatant water and sand, where the flow in the sand can be modelled by Darcy's law.

As for the numerical scheme of the concentration variables, different numerical fluxes can be analysed and higher-order schemes can be achieved, for example, by using higher-order polynomial reconstructions. For the approximation of the flow equations, several options have already been mentioned in the introduction, from which we observe the compatibility of using a mass-conservative scheme such as the one in~\cite{Oyarzua2023}.
Formulations in which the velocity is field is neither $H^1$-conforming nor $H(\div)$-conforming need to be further investigated.

\section*{Acknowledgments}
JC is supported by ANID through Fondecyt project 3230553. SD and JM acknowledge support from the Swedish Research Council (Vetenskapsr{\aa}det 2019-04601) and the Swedish Research Council for Sustainable Development (FORMAS 2022-01900). The computations were enabled by resources provided by the National Academic Infrastructure for Supercomputing in Sweden (NAISS), partially funded by the Swedish Research Council through grant agreement no. 2022-06725.

\appendix
\section{Postponed proofs.}\label{appendix:A}

\begin{proof}[Proof of Lemma~\ref{lem:auxiliar}]
We first observe that since $\div(\bB{\beta}) = 0$ holds on $\Omega$, the vector function $\bB{\beta}\in \mathbf{H}(\div;\Omega)$ (or $\mathbf{H}^1(\Omega)$) is continuous across each edge $e\in\mathcal{E}_h^{\rm int}$, and the normal component $\beta^e := \bB{\beta}\cdot \bB{n}_e$ is well defined. Then, using the divergence theorem on each element $K\in \mathcal{T}_h$, we obtain
\begin{align*}
	\sum_{{e\in\mathcal{E}_h^{\rm int}}\atop{e\subseteq \partial K}}\int_e \beta^e=0,\qquad \forall K\in \mathcal{T}_h.
\end{align*}
Then, using the above result combined with the fact that $\varphi_L \ge \varphi_{K_{\circ}}$ for all $L\in \mathcal{T}_h$, and observing that $ \Pi_{\ominus}^{\alpha}(\varphi)$ is zero everywhere but in $K_{\circ}$, we can show the first inequality in \eqref{ineq:aux:aconv} as follows
\begin{align*}
	\aconv\big(\varphi, \Pi_{\ominus}^{\alpha}(\varphi); \bB{\beta}\big)
	& = \sum_{{e\in\mathcal{E}_h^{\rm int}}\atop{e=K_{\circ}\cap L}}\int_e\Big(
	\beta^e_{\oplus}\varphi_{K_{\circ}} - \beta^e_{\ominus}\varphi_{L}\Big)(\varphi_{K_{\circ}} - \alpha)_{\ominus}\\
	& \le \sum_{{e\in\mathcal{E}_h^{\rm int}}\atop{e=K_{\circ}\cap L}}\int_e\Big(\beta^e_{\oplus}\varphi_{K_{\circ}} - \beta^e_{\ominus}\varphi_{K_{\circ}}\Big)
	(\varphi_{K_{\circ}} - \alpha)_{\ominus}
	 = \varphi_{K_{\circ}}(\varphi_{K_{\circ}}-\alpha)_{\ominus}\sum_{{e\in\mathcal{E}_h^{\rm int}}\atop{e \subseteq\partial K_{\circ}}}\int_e \beta^e=0.
\end{align*}
For the second inequality in \eqref{ineq:aux:aconv}, we have that $\varphi_L \le \varphi_{K^{\diamond}}$ for all $L\in \mathcal{T}_h$, and proceeding similarly as before, we obtain
\begin{align*}
	\aconv\big(\varphi, \Pi_{\oplus}^{\alpha}(\varphi); \bB{\beta}\big)
	& \ge \varphi_{K^{\diamond}}(\varphi_{K^{\diamond}} - \alpha)_{\oplus}\sum_{{e\in\mathcal{E}_h^{\rm int}}\atop{e = K^{\diamond}\cap L}}\int_e\beta^e=0.
\end{align*}
Thus, we conclude the proof.
\end{proof}

\begin{proof}[Proof of Lemma~\ref{prop:mass-lumping}]
We begin by rewriting Equation~\eqref{eq:other:u} as the following linear system
\begin{align*}
\mathbb{M}[\widetilde{u}_h^{n+1}] = [\zeta],
\end{align*}
where $[\widetilde{u}_h^{n+1}]$ denotes the vector of degrees of freedom (d.o.f.) of $\widetilde{u}_h^{n+1}\in\mathcal{P}_1^{\rm cont}(\Omega)$, and each $i$-th component of the vector $[\zeta]$ is given by $[\zeta]_i = \big(u^{n+1}_h, \vartheta^{(i)}_h\big)_{0,\Omega}$, where $\vartheta^{(i)}_h$ is the Lagrange basis function of the continuous space $\mathcal{P}_1^{\rm cont}(\Omega)$, associated to the $i$-th vertex of the triangulation. We approximate $\mathbb{M}$ by a diagonal matrix $\widetilde{\mathbb{M}}$ with positive entries via row-summation, i.e. $\widetilde{\mathbb{M}}_{ii} = \sum_{j} \mathbb{M}_{ij}$. Then, inverting this matrix is straightforward, and we can readily obtain
\begin{equation*}
	[\widetilde{u}_h^{n+1}]_i = \widetilde{\mathbb{M}}_{ii}^{-1}{[b]_i}.
\end{equation*}
Noting that $\sum_{j} \vartheta^{(j)}_h \equiv 1$ in $\Omega$, and since $u_h^{n+1}$ is piecewise constant, we have that
\begin{align*}
	\widetilde{\mathbb{M}}_{ii} &= \sum_{j} \int_\Omega  \vartheta^{(i)}_h \vartheta^{(j)}_h = \int_\Omega \vartheta^{(i)}_h > 0 \qquad\text{and}\qquad 	[\zeta]_i = \big(u^{n+1}_h, \vartheta^{(i)}_h\big)_{0,\Omega} = \sum_{K \in \mathcal{T}_h}u^{n+1}_{h,K} \int_K \vartheta^{(i)}_h > 0,
\end{align*}
from which it easily follows that $[\widetilde{u}_h^{n+1}]_i > 0$ and, using the fact that $u^{n+1}_{h,K}\leq \rhoC$, we get
\begin{align*}
	[\widetilde{u}_h^{n+1}]_i &= \dfrac{1}{\widetilde{\mathbb{M}}_{ii}}\sum_{K \in \mathcal{T}_h}u^{n+1}_{h,K} \int_K \vartheta^{(i)}_h
	\leq \dfrac{\rhoC}{\widetilde{\mathbb{M}}_{ii}} \sum_{K \in \mathcal{T}_h}\int_K \vartheta^{(i)}_h = \rhoC,
\end{align*}
with which we conclude that $0\leq \widetilde{u}_h^{n+1}\leq \rhoC$.
\end{proof}

\bibliographystyle{plain}


\end{document}